\documentclass[a4paper,10pt]{article}
\usepackage[utf8x]{inputenc}
\usepackage{tracefnt,amsmath,tabu,array}
\usepackage{amssymb,graphicx,setspace,amsfonts,amsbsy}
\usepackage{pifont,latexsym,ifthen,amsthm,rotating,calc,textcase,booktabs,cancel,slashed}

\addtolength{\oddsidemargin}{-.5in}
\addtolength{\evensidemargin}{-.5in}
\addtolength{\textwidth}{1 in}

\addtolength{\topmargin}{-.5in}
\addtolength{\textheight}{.5in}

\newtheorem{theorem}{Theorem}[section]
\newtheorem{lemma}[theorem]{Lemma}
\newtheorem{corollary}[theorem]{Corollary}
\newtheorem{proposition}[theorem]{Proposition}
\newtheorem{definition}[theorem]{Definition}

\newtheorem{conjecture}[theorem]{Conjecture}
\newtheorem{remark}[theorem]{Remark}
\newcommand{\filledbox}{\leavevmode
  \hbox to.77778em{%
  \hfil\vbox to.675em{\hrule width.6em height.6em}\hfil}}

\newcommand{\Rm}{{\mathbb R}}

\newcommand{\eps}{\varepsilon}

\begin{document}
\tabulinesep=1.0mm
\title{Inward/outward Energy Theory of Non-radial Solutions to 3D Semi-linear Wave Equation\footnote{MSC classes: 35L71, 35L05; This work is supported by National Natural Science Foundation of China Programs 11601374, 11771325}}

\author{Ruipeng Shen\\
Centre for Applied Mathematics\\
Tianjin University\\
Tianjin, China}

\maketitle

\begin{abstract}
  The topic of this paper is a semi-linear, energy sub-critical, defocusing wave equation $\partial_t^2 u - \Delta u = - |u|^{p -1} u$ in the 3-dimensional space 
  with $3\leq p<5$. We generalize inward/outward energy theory and weighted Morawetz estimates for radial solutions to the non-radial case. As an application we show that if $3<p<5$ and $\kappa>\frac{5-p}{2}$, then the solution scatters as long as the initial data $(u_0,u_1)$ satisfy   
 \[
  \int_{\Rm^3} (|x|^\kappa+1)\left(\frac{1}{2}|\nabla u_0|^2 + \frac{1}{2}|u_1|^2+\frac{1}{p+1}|u_0|^{p+1}\right) dx < +\infty.
 \]
If $p=3$, we can also prove the scattering result if initial data $(u_0,u_1)$ are contained in the critical Sobolev space and satisfy the inequality
\[
 \int_{\Rm^3} |x|\left(\frac{1}{2}|\nabla u_0|^2 + \frac{1}{2}|u_1|^2+\frac{1}{4}|u_0|^{p+1}\right) dx < +\infty.
\]
These assumptions on the decay rate of initial data as $|x| \rightarrow \infty$ are weaker than previously known scattering results.
\end{abstract}

\section{Introduction}

\subsection{Background}

We consider the Cauchy problem of the defocusing semi-linear wave equation in 3-dimensional case 
\[
 \left\{\begin{array}{ll} \partial_t^2 u - \Delta u = - |u|^{p-1}u, & (x,t) \in \Rm^3 \times \Rm; \\
 u(\cdot, 0) = u_0; & \\
 u_t (\cdot,0) = u_1. & \end{array}\right.\quad (CP1)
\]
\paragraph{Critical Sobolev spaces} We define $s_p = 3/2-2/(p-1)$ and call $\dot{H}^{s_p}\times \dot{H}^{s_p-1}(\Rm^3)$ the critical Sobolev space of (CP1).  This is because the $\dot{H}^{s_p} \times \dot{H}^{s_p-1}$ norm is preserved if we apply the following natural rescaling transformation on a solution $u$ to (CP1). Given a solution $u$ and a positive constant $\lambda$, one can check that the function $u_\lambda = \lambda^{-2/(p-1)} u(x/\lambda,t/\lambda)$ is another solution to (CP1) with
\begin{align*}
\left\|(u_\lambda(\cdot,\lambda t'), \partial_t u_\lambda (\cdot,\lambda t'))\right\|_{\dot{H}^{s_p} \times \dot{H}^{s_p-1}} = \left\|(u(\cdot,t'), \partial_t u(\cdot,t'))\right\|_{\dot{H}^{s_p} \times \dot{H}^{s_p-1}}.
\end{align*}

\paragraph{Local theory} The main tool is the Strichartz estimate. An almost complete version of Strichartz estimates for 3D wave equation can be found in Ginibre-Velo \cite{strichartz}. A combination of suitable Strichartz estimates with a regular fixed-point argument gives the local well-posedness of this problem for initial data in the critical Sobolev space or energy space. Please see Kapitanski \cite{loc1} and Lindblad-Sogge \cite{ls}, for example, for more details of local theory. 

\paragraph{Energy conservation law} The energy is the most important conserved quantity of this equation. Throughout this work we use the notation $E$ for the energy. 
\[
 E(u, u_t) = \int_{\Rm^3} \left(\frac{1}{2}|\nabla u(x, t)|^2 +\frac{1}{2}|u_t(x, t)|^2 + \frac{1}{p+1}|u(x,t)|^{p+1}\right)\,dx = \hbox{Const}.
\]
In the energy sub-critical case $3\leq p<5$, any solution with a finite energy must exist for all time $t \in \Rm$. 

\paragraph{Global behaviour} In early 1990's M. Grillakis \cite{mg1, mg2} proved that any solution with initial data in the critical space $\dot{H}^1 \times L^2(\Rm^3)$ must scatter in both two time directions in the energy critical case $p=5$. By scattering we mean that a solution of nonlinear equation gets closer and closer to a solution of linear equation as $t$ goes to infinity. We expect that a similar result also holds for other exponents $p$. 
\begin{conjecture} \label{conjecture1}
 Any solution to (CP1) with initial data $(u_0,u_1) \in \dot{H}^{s_p} \times \dot{H}^{s_p-1}$ must exist for all time $t \in \Rm$ and scatter in both two time directions.
\end{conjecture}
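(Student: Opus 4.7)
The plan is to follow the Kenig--Merle concentration-compactness / rigidity road map, adapted to the energy sub-critical regime. First I would combine the Strichartz estimates cited in the introduction with a fixed-point argument to build a perturbation/stability theory: solutions in $\dot H^{s_p}\times \dot H^{s_p-1}$ are in one-to-one correspondence with a finite bound on a scale-invariant Strichartz space-time norm $\|u\|_{Y}$, and the data-to-solution map depends continuously on the data in a quantitative, Strichartz-perturbative sense. This reduces the conjecture to the a priori bound $\|u\|_{Y}<\infty$ on the maximal interval of existence.

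Next I would argue by contradiction. Let $N_c$ be the infimum, among non-scattering solutions, of a suitable scale-invariant quantity (for instance the $\dot H^{s_p}\times \dot H^{s_p-1}$ norm of the data). Using a Bahouri--G\'erard type linear profile decomposition in $\dot H^{s_p}\times \dot H^{s_p-1}$ together with the stability lemma from the first step, I would extract a minimal non-scattering solution $u_c$ whose trajectory $(u_c(\cdot,t),\partial_t u_c(\cdot,t))$, after renormalisation by the scaling and translation symmetries $(\lambda(t),x(t))$, is precompact in the critical Sobolev space. The last step is then a rigidity statement: no nonzero compact critical element of this form exists.

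The rigidity step is the serious obstacle and the reason the conjecture is still open. The natural tool is a monotonicity formula --- a weighted Morawetz identity or an inward/outward energy identity --- evaluated on $u_c$; compactness of the trajectory should then force the right-hand side of the identity to vanish and hence $u_c\equiv 0$. The difficulty is the mismatch between the regularity of the critical element, which lives only in $\dot H^{s_p}\times\dot H^{s_p-1}$ with $s_p<1$ whenever $p<5$, and the quantities (energy flux, weighted energies, space-time integrals of $|u|^{p+1}$) that appear in all known Morawetz-type identities, which are controlled only under a finite-energy or additional-decay hypothesis on the data. The results of this paper exploit precisely such extra decay to close the rigidity step; without it one would need either a genuinely new monotonicity formula at the critical Sobolev regularity, or a classification of compact trajectories (self-similar or soliton-like) ruling them out by other means, and neither is currently available in the non-radial sub-critical setting.
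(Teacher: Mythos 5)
There is nothing to compare here: the statement you were asked to prove is Conjecture \ref{conjecture1}, which the paper explicitly records as an open problem and does not prove. Its actual results (Theorems \ref{main 1}, \ref{main 2}, \ref{main 3}) are strictly weaker statements that assume either finite energy or polynomial decay of the weighted energy of the data. Your submission is likewise not a proof: you outline the Kenig--Merle road map and then state, correctly, that the rigidity step cannot currently be closed. An honest description of why a statement is hard is not a proof of it, so the proposal must be rejected as incomplete; the missing idea is exactly the one you identify, namely a monotonicity or channel-of-energy type rigidity argument that works at the regularity $\dot{H}^{s_p}\times\dot{H}^{s_p-1}$ with $s_p<1$ for non-radial data, where no conserved or coercive quantity is available.

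One further concrete gap in the compactness half of your sketch deserves mention. You propose to take $N_c$ to be the infimum of the $\dot{H}^{s_p}\times\dot{H}^{s_p-1}$ norm of the \emph{data} over non-scattering solutions. This quantity is not conserved and is not known to be monotone or even bounded along the flow in the sub-critical regime, so the induction-on-size / profile-decomposition machinery does not run on it: after evolving the profiles nonlinearly you have no control on their critical norms at later times, and the stability lemma cannot be applied. This is precisely why the literature you implicitly invoke (Dodson--Lawrie \cite{cubic3dwave}, Shen \cite{shen2}, Dodson et al.\ \cite{nonradial3p5}) proves only the \emph{conditional} theorem: scattering under the a priori bound \eqref{uniform UB}, i.e.\ the infimum is taken over $\sup_{t\in I}\|(u,u_t)\|_{\dot{H}^{s_p}\times\dot{H}^{s_p-1}}$, and removing that hypothesis is the content of the conjecture. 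So both halves of your outline, not only the rigidity step, would need genuinely new input.
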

\noindent This is still an open problem. Although there are many related results by different methods.

\paragraph{Scattering with a priori estimates} There are many works proving that if a solution $u$ with a maximal lifespan $I$ satisfies an a priori estimate
\begin{equation}
  \sup_{t \in I} \left\|(u(\cdot,t), u_t(\cdot, t))\right\|_{\dot{H}^{s_p} \times \dot{H}^{s_p-1} (\Rm^3)} < +\infty, \label{uniform UB}
 \end{equation}
then $u$ must exist globally in time and scatter. All these works use a compact-rigidity argument, which was introduced by Kenig-Merle in \cite{kenig, kenig1} for the study of energy critical wave and Sch\"{o}dinger equations. In the energy supercritical case $p>5$, one may refer to Duyckaerts et al. \cite{dkm2}, Kenig-Merle \cite{km}, Killip-Visan \cite{kv3} for radial solutions and Killip-Visan \cite{kv2} for non-radial solutions. In the energy subcritical case $p<5$, please see Dodson-Lawrie \cite{cubic3dwave} for $1+\sqrt{2}<p\leq 3$ and Shen \cite{shen2} for $3<p<5$, both in the radial case, as well as Dodson et al. \cite{nonradial3p5} for $3<p<5$ in the non-radial case. Please note that the argument in the papers mentioned above works not only in the defocusing case but also in the focusing case. In the energy critical case $p=5$, however, we have
\begin{itemize}
 \item In the defocusing case, the assumption \eqref{uniform UB} automatically holds by the energy conservation law. 
 \item In the focusing case, there exist solutions to (CP1) which satisfy the assumption \eqref{uniform UB} but fail to scatter. We call these kind of solutions type II blow-up solutions. One specific example of type II blow-up solutions is the ground state $W(x) = (1+|x|/3)^{-1/2}$. To learn more about global behaviour of type II blow-up solutions, please refer to Duyckaerts-Kenig-Merle \cite{dkmradial, se} in the radial case, and Duychaerts-Jia-Kenig \cite{djknonradial} in the non-radial case.
\end{itemize}

\paragraph{Non-radial initial data} The scattering of solutions can also be proved if the initial data satisfy stronger regularity and/or decay conditions. We start by results without a radial assumption.

\begin{itemize}
 \item Let $3 \leq p<5$. We may use the conformal conservation law method to prove the scattering of solutions if initial data satisfy 
 \[
  \int_{\Rm^3} \left[(|x|^2+1) (|\nabla u_0 (x)|^2 + |u_1(x)|^2) + |u_0(x)|^2 \right] dx < \infty.
 \]
 Please see Ginibre-Velo \cite{conformal2} and Hidano \cite{conformal} for more details. 
 \item Yang in his recent work \cite{yang1} considers energy momentum tensor and its associated current in order to obtain a uniform weighted energy bound and inverse polynomial decay of the energy flux through certain hyper surfaces. As an application scattering of solutions are proved under the assumption 
 \[
  \int_{\Rm^3} (1+|x|^\gamma)\left(\frac{1}{2}|\nabla u_0(x)|^2 + \frac{1}{2}|u_1(x)|^2 + \frac{1}{p+1}|u_0(x)|^{p+1}\right) dx < + \infty.
 \]
 Here the exponent $p$ and coefficient $\gamma$ satisfy
 \begin{align*}
  &\frac{1+\sqrt{17}}{2} < p < 5;& &\max\left\{\frac{5-p}{p-1}, 1\right\} < \gamma < \min\{p-1,2\}.&
 \end{align*}
\end{itemize}

\paragraph{Radial initial data} Radial assumption enables us to obtain the scattering of solutions under much weaker regularity/decay assumptions.
\begin{itemize}
 \item Dodson gives a proof of Conjecture \ref{conjecture1} in the radial case for $3 \leq p<5$ in his recent works \cite{claim1,claim2}. The radial assumption is essential, because the argument uses not only radial Strichartz estimates but also a conformal transformation for 3D wave equation, which was introduced in \cite{shen3} and works only for radial solutions. 
 \item The author introduces an inward/outward energy method in a recent work \cite{shenenergy}. As an application we may prove the scattering of solutions if the energy of initial data decays at a certain rate as $|x| \rightarrow +\infty$. More precisely, we assume
 \[
  \int_{\Rm^3} (1+|x|^\kappa)\left(\frac{1}{2}|\nabla u_0(x)|^2 + \frac{1}{2}|u_1(x)|^2 + \frac{1}{p+1}|u_0(x)|^{p+1}\right) dx < + \infty.
 \]
 Here $\kappa > \frac{5-p}{p+1}$ is a constant. In a more recent work \cite{sheninward} the author proves the scattering in the positive time direction by assuming that the inward energy of initial data decays at the same rate as above, regardless of the size and decay rate of outward energy. Please note that the decay assumption here is so weak that it can not guarantee $(u_0,u_1) \in \dot{H}^{s_p} \times \dot{H}^{s_p-1}$. As a result this inward/outward energy method discovers a scattering phenomenon which has not been covered by previously known scattering theory. The author would like to mention that the idea of inward/outward energy method partially coincides with the channel of energy method. To learn more about the channel of energy method, please refer to  Duyckaerts et al. \cite{tkm1}, Kenig et al. \cite{channel5d, channel}.
\end{itemize} 

\subsection{Motivation and Idea}

Since the method of inward/outward energy is a powerful tool to understand the asymptotic behaviour of radial solutions to 3D defocusing wave equation, as shown in the author's recent works \cite{shenenergy, sheninward}, we generalize this method to non-radial solutions in this work. 

\paragraph{Inward and outward energies} Before we define inward/outward energy of non-radial solutions, we first introduce a few notations.
 \begin{definition} For convenience we first define a few differential operators
 \begin{align*}
  (\mathbf{L} u)(x,t) & = \frac{1}{r}\partial_r(ru) = \nabla u(x,t)\cdot \frac{x}{|x|} + \frac{u(x,t)}{|x|};\\
  (\mathbf{L}_\pm u)(x,t) & = \frac{1}{r} (\partial_r\pm \partial_t) (ru) = \nabla u(x,t)\cdot \frac{x}{|x|} + \frac{u(x,t)}{|x|} \pm u_t (x,t). 
 \end{align*}
 When we consider initial data to (CP1), we also use the notation
 \begin{align*}
   &(\mathbf{L} u_0)(x) = \nabla u_0(x) \cdot \frac{x}{|x|} + \frac{u_0(x)}{|x|}& &\left[\mathbf{L}_\pm (u_0,u_1)\right](x) = \nabla u_0(x) \cdot \frac{x}{|x|} + \frac{u_0(x)}{|x|} \pm u_1(x).&
 \end{align*}
 \end{definition}
 \noindent Now we can define
 \begin{definition} \label{energies}
 Given any $t$ we define inward energy $E_-$ and outward energy $E_+$
\begin{align*}
 E_-(t) = \int_{\Rm^3} \left[\frac{1}{4}|\mathbf{L}_+ u(x,t)|^2 + \frac{1}{4}|\slashed{\nabla} u(x,t)|^2 + \frac{1}{2(p+1)}|u(x,t|^{p+1} \right] dx;\\
 E_+(t) = \int_{\Rm^3} \left[\frac{1}{4}|\mathbf{L}_- u(x,t)|^2 + \frac{1}{4}|\slashed{\nabla} u(x,t)|^2 + \frac{1}{2(p+1)}|u(x,t|^{p+1} \right] dx.
\end{align*}
Here $\slashed{\nabla}$ means the covariant derivative on the sphere centred at the origin with a fixed radius $|x|$. We have $|\nabla_x u|^2 = |u_r|^2 + |\slashed{\nabla} u|^2$. Given a radial symmetric region $\Sigma\subset \Rm^3$, we can also consider the inward/outward energy in the region
\[
 E_\pm (t;\Sigma) = \int_{\Sigma} \left[\frac{1}{4}|\mathbf{L}_\mp u(x,t)|^2 + \frac{1}{4}|\slashed{\nabla} u(x,t)|^2 + \frac{1}{2(p+1)}|u(x,t|^{p+1} \right] dx
\] 
In particular, we define $E_{\pm} (t;r_1,r_2) = E_{\pm} (t;\{x\in \Rm^3: r_1<|x|<r_2\})$.
\end{definition}
\begin{remark} \label{E plus add minus}
 By Lemma \ref{identity of w u energy}, we have 
\begin{align*}
 E_+(t) + E_-(t) & = \int_{\Rm^3} \left[\frac{1}{2}\left|\mathbf{L} u\right|^2 + \frac{1}{2}|u_t|^2 + \frac{1}{2}|\slashed{\nabla} u|^2 + \frac{1}{p+1}|u|^{p+1}\right] dx\\
 & = \int_{\Rm^3} \left[\frac{1}{2}|u_r|^2  + \frac{1}{2}|u_t|^2 + \frac{1}{2}|\slashed{\nabla} u|^2 + \frac{1}{p+1}|u|^{p+1}\right] dx = E.
\end{align*}
But in general we have
\[
 E_+(t; r_1,r_2) + E_-(t;r_1,r_2) \neq \int_{r_1<|x|<r_2}  \left[\frac{1}{2}|\nabla u(x,t)|^2  + \frac{1}{2}|u_t(x,t)|^2 +  \frac{1}{p+1}|u(x,t)|^{p+1}\right] dx.
\]
\end{remark}
\paragraph{Energy flux formula} As in the radial case, we then give an energy flux formula for inward/outward energy of non-radial solutions. Again the Morawetz estimates are essential to the proof. The major challenge in the non-radial case is to deal with the last two terms in the spherical coordinate version of the equation
\[
 (\partial_t-\partial_r)(\partial_t +\partial_r)(ru) = -r|u|^{p-1} u  + \frac{1}{r \sin \theta} \partial_\theta (u_\theta \sin \theta) + \frac{u_{\varphi \varphi}}{r \sin^2 \theta}.
\]
These two terms simply vanish in the radial case. We may deal with these terms containing second derivatives about $\theta, \varphi$ via integration by parts. In order to avoid the trouble of boundary terms we always consider spatially radial symmetric regions in the energy flux formula. 
The energy flux formula of inward/outward energy plays two important roles in our argument
\begin{itemize}
 \item It helps to give information about space-time distribution of the inward/outward energy, which gives plentiful information about the asymptotic behaviour of $u$. 
 \item It provides a framework that we can work on to obtain the weighted Morawetz estimates. 
\end{itemize}

\paragraph{Weighted Morawetz} If the energy of initial data decays at a certain rate when $|x| \rightarrow +\infty$, i.e. we have
\[
 \int_{\Rm^3} (1+|x|^\kappa) \left[\frac{1}{2}|\nabla u_0|^2 + \frac{1}{2}|u_1|^2 + \frac{1}{p+1}|u_0|^{p+1}\right] dx < + \infty,\quad 0<\kappa<1;
\]
then we may obtain the following weighted Morawetz 
\begin{align*}
 \int_{-\infty}^\infty \int_{\Rm^3} \frac{(|x|+t)^\kappa \left(|u(x,t)|^{p+1}+|\slashed{\nabla} u(x,t)|^2\right)}{|x|} dxdt < + \infty.
\end{align*}
As an application we have the decay estimates $E_-(t) \leq C t^{-\kappa}$ when $t>0$ is large.

\paragraph{Application on Scattering Theory} If $\kappa>\frac{5-p}{2}$, then the decay estimate $E_-(t) \leq C t^{-\kappa}$ implies $\|u\|_{L^q L^{p+1}(\Rm^+ \times \Rm^3)} < +\infty$ for $q$ slightly smaller than $2(p+1)/(5-p)$. We then combine this global estimate with the local theory and the energy conservation law to prove the scattering result.

\subsection{Main Results}

In this subsection we give three main theorems. Theorem \ref{main 1} gives the spatial energy distribution property of finite-energy solutions to (CP1) as $t\rightarrow \pm\infty$. This theorem is proved by an inward/outward energy method. As an application we may prove a scattering theory about solutions to (CP1) as given in Theorem \ref{main 2} and Theorem \ref{main 3}. Our assumptions are weaker than previously known scattering theory of non-radial solutions mentioned above.
\begin{theorem} \label{main 1}
Assume $3\leq p\leq 5$. Let $u$ be a solution to (CP1) with a finite energy $E$. Then we have the following asymptotic behaviour regarding the energy of $u$
\begin{align*}
& \lim_{t \rightarrow \pm \infty} \int_{\Rm^3} \left(\frac{1}{2}\left|\mathbf{L}_\pm u(x,t)\right|^2+ \frac{1}{2}|\slashed{\nabla} u(x,t)|^2 + \frac{1}{p+1}|u(x,t)|^{p+1} \right) dx = 0;\\
& \lim_{t \rightarrow \pm \infty} \int_{|x|<c|t|} \left(\frac{1}{2}|\nabla u(x,t)|^2 + \frac{1}{2}|u_t(x,t)|^2 + \frac{1}{p+1} |u(x,t)|^{p+1}\right) dx = 0, \quad c\in (0,1).
\end{align*}
\end{theorem}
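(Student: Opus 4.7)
The plan is to recognize the first limit as precisely $\lim_{t\to+\infty} 2E_-(t)=0$ for the upper sign and $\lim_{t\to-\infty} 2E_+(t)=0$ for the lower sign, since by Definition \ref{energies} the two integrands are exactly twice the densities defining $E_\mp$. By Remark \ref{E plus add minus} one has $E_-(t)+E_+(t)\equiv E$, and the time reversal $t\mapsto -t$ preserves (CP1) while swapping $E_-$ and $E_+$, so the whole first statement reduces to showing $E_-(t)\to 0$ as $t\to+\infty$. The second (interior cone) limit will then be read off from the first.

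I would first establish monotonicity of $E_-$ via the inward/outward energy flux formula developed in the paper. Applying it on a radial annulus $\{r_1<|x|<r_2\}\subset\Rm^3$ over a time slab $[t_1,t_2]$ and sending $r_1\to 0^+$, $r_2\to+\infty$, finiteness of $E$ together with an averaging argument in $(r_1,r_2)$ eliminates the boundary fluxes and yields an identity of the shape
\begin{align*}
 E_-(t_1)-E_-(t_2)=\int_{t_1}^{t_2}\!\!\int_{\Rm^3}\mathcal{M}_-(x,t)\,dx\,dt\geq 0, \qquad t_1\leq t_2,
\end{align*}
with $0\leq \mathcal{M}_-\leq C(|\slashed{\nabla} u|^2+|u|^{p+1})/|x|$. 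Thus $E_-$ decreases to a limit $\ell\geq 0$. To rule out $\ell>0$, combine with the standard interior Morawetz estimate
\begin{align*}
 \int_{\Rm}\!\int_{\Rm^3}\frac{|\slashed{\nabla} u(x,t)|^2+|u(x,t)|^{p+1}}{|x|}\,dx\,dt\leq CE<+\infty
\end{align*}
(proved by pairing (CP1) with $u_r+u/|x|$; the derivation is insensitive to radial symmetry). Applying the monotonicity identity on dyadic slabs $[2^k,2^{k+1}]$ and using a Hardy-type lower bound of $\mathcal{M}_-$ by the Morawetz density restricted to the annulus $|x|\lesssim 2^k$, one gets $\sum_k(E_-(2^k)-E_-(2^{k+1}))\gtrsim \sum_k \ell$ if $\ell>0$, contradicting the above. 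Hence $\ell=0$.

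For the interior cone statement, fix $c\in(0,1)$. The pointwise identity
\begin{align*}
 \tfrac{1}{4}|\mathbf{L}_+u|^2+\tfrac{1}{4}|\mathbf{L}_-u|^2=\tfrac{1}{2}\bigl(u_r+u/|x|\bigr)^2+\tfrac{1}{2}|u_t|^2,
\end{align*}
combined with Hardy's inequality to handle the $u/|x|$ cross-term, gives
\begin{align*}
 \int_{|x|<c|t|}\!\!\left(\tfrac{1}{2}|\nabla u|^2+\tfrac{1}{2}|u_t|^2+\tfrac{|u|^{p+1}}{p+1}\right)dx\leq 2E_-(t)+2E_+(t;|x|<c|t|)+\text{(Hardy remainder)}.
\end{align*}
The first term vanishes by the first part; the Hardy remainder is absorbed by the finite Morawetz integral above; and the outward contribution $E_+(t;|x|<c|t|)\to 0$ by a standard finite-speed-of-propagation argument: truncate the initial data to $|x|\leq R$ so that the outward-moving free part of the truncated solution leaves the cone $|x|<c|t|$ for $t>R/(1-c)$; treat the nonlinear coupling perturbatively; then send $R\to\infty$ using energy continuity.

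The hard part will be the non-radial extension of the flux formula underlying the monotonicity step. The spherical-coordinate equation carries the additional terms $\frac{1}{r\sin\theta}\partial_\theta(u_\theta\sin\theta)+u_{\varphi\varphi}/(r^2\sin^2\theta)$, absent in the radial case, which must be handled by integration by parts on each sphere $\{|x|=r\}$. One then has to verify that the resulting angular quadratic forms recombine with the radial flux contributions so as to keep $\mathcal{M}_-$ nonnegative (up to terms dominated by the Morawetz density) without producing uncontrollable cross terms between $u_r+u/|x|$, $u_t$, and $\slashed{\nabla} u$—this is precisely the sign-check flagged in the Motivation subsection, and everything in the preceding three paragraphs rests on it going through cleanly.
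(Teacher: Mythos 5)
Your reduction of the theorem to $\lim_{t\to+\infty}E_-(t)=0$ plus a bound of the interior-cone energy by $E_-+E_+(t;0,c|t|)$ matches the paper's strategy, and your monotonicity step is essentially workable once you restore the loss term $\pi\mu([t_1,t_2])$ (the flux through the origin, which survives the limit $r_1\to0^+$ and does not vanish; its sign is at least favorable). However, two of your key steps have genuine gaps. The argument that the limit $\ell=\lim_{t\to+\infty}E_-(t)$ is zero does not work: you claim the decrement $E_-(2^k)-E_-(2^{k+1})$ is bounded below by a multiple of $\ell$ via the Morawetz density, but no such lower bound exists. The Morawetz density $(|u|^{p+1}+|\slashed{\nabla}u|^2)/|x|$ contains no $|\mathbf{L}_+u|^2$ term, and at late times the residual inward energy is carried precisely by the $|\mathbf{L}_+u|^2$ part of incoming waves located at large $|x|$, which contribute arbitrarily little to $\mathcal{M}(\Rm^3\times[2^k,2^{k+1}])$ while $E_-(2^k)$ stays close to $E$; telescoping your slab identity only returns $E_-(T)-\ell=\pi\mu([T,\infty))+\mathcal{M}(\Rm^3\times[T,\infty))$, which is consistent with every value of $\ell$. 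What is needed is an identity for $E_-(t_0)$ \emph{itself}, not for its decrements. The paper gets this from the cone law (Proposition \ref{cone law}): the flux formula on the solid backward cone $\{t\geq t_0,\ |x|+t\leq t_0+r_0\}$ gives $E_-(t_0;0,r_0)=\pi\mu([t_0,t_0+r_0])+Q_-^-(t_0+r_0;t_0,t_0+r_0)+\mathcal{M}(\Omega)$, the lateral flux $Q_-^-$ is killed along a sequence $r_0\to\infty$ by the averaging Lemma \ref{lift of r}, and then $E_-(t_0)=\pi\mu([t_0,\infty))+\mathcal{M}(\Rm^3\times[t_0,\infty))$ is a sum of tails of finite measures, hence tends to $0$.

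The second gap is your treatment of $E_+(t;0,c|t|)$ by ``finite speed of propagation plus a perturbative treatment of the nonlinear coupling.'' Finite speed of propagation controls the solution in the exterior $|x|>R+t$ of forward cones and gives no information whatsoever about the region $|x|<c|t|$: energy initially supported in $|x|\leq R$ is not forced to leave that region by domain-of-dependence considerations (that it does is exactly what must be proved), and for large data the nonlinearity cannot be treated perturbatively in the energy space. The paper's Proposition \ref{inner decay} instead averages the cone law over cone radii $r\in[c|t|,\tfrac{c+1}{2}|t|]$ and again invokes Lemma \ref{lift of r}, reducing the bound to tails of $\pi\mu$ and $\mathcal{M}$ over regions escaping to infinity. (A smaller point: no ``Hardy remainder'' is needed in your fixed-time inequality --- by Remark \ref{relationship of u w energy} the boundary term on a ball has the favorable sign, and in any case a space-time Morawetz integral cannot absorb a fixed-time error.) Finally, you defer the entire non-radial flux formula, Proposition \ref{energy flux formula}, including the construction of the measure $\mu$ in the limit $r\to0^+$, to a ``sign check''; since everything above rests on it, the proposal as written does not yet constitute a proof.
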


\begin{remark}
 The first limit in the theorem above is equivalent to saying $\displaystyle \lim_{t \rightarrow \pm \infty} E_\mp (t) = 0$. 
\end{remark}

\begin{theorem} \label{main 2}
Assume $3< p< 5$ and $\kappa > \frac{5-p}{2}$. If initial data $(u_0,u_1)$ satisfy
\[
 E_{\kappa} (u_0,u_1) \doteq \int_{\Rm^3} (1+|x|^\kappa)\left(\frac{1}{2}|\nabla u_0|^2 + \frac{1}{2}|u_1|^2 + \frac{1}{p+1} |u_0|^{p+1}\right) dx < +\infty,
\]
 then the corresponding solution $u$ to (CP1) with initial data $(u_0,u_1)$ must scatter in both two time directions. More precisely, there exists $(v_0^\pm ,v_1^\pm) \in (\dot{H}^1\cap \dot{H}^{s_p}) \times (L^2\cap \dot{H}^{s_p-1})$, so that 
 \[
  \lim_{t \rightarrow \pm \infty} \left\|\begin{pmatrix} u(\cdot,t)\\ \partial_t u(\cdot,t)\end{pmatrix} - 
  \mathbf{S}_L (t)\begin{pmatrix}v_0^\pm \\ v_1^\pm\end{pmatrix}\right\|_{\dot{H}^s \times \dot{H}^{s-1}(\Rm^3)} = 0, \; \forall s\in [s_p,1].
 \]
 Here $\mathbf{S}_L (t)$ is the linear wave propagation operator. 
\end{theorem}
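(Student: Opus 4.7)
The plan is to combine the weighted Morawetz machinery of the earlier sections with Strichartz theory, after first checking that the hypothesis places the data in the critical Sobolev space.

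First I would verify that the assumption $E_\kappa(u_0,u_1)<\infty$ forces $(u_0,u_1)\in \dot H^{s_p}\times \dot H^{s_p-1}$. Splitting $\Rm^3$ into $|x|\le 1$ and $|x|\ge 1$ and applying H\"older's inequality to
$\int|\nabla u_0|^q\,dx = \int |x|^{-q\kappa/2}\cdot \bigl(|x|^{\kappa/2}|\nabla u_0|\bigr)^q\,dx$
shows that $(\nabla u_0,u_1)\in L^q(\Rm^3)$ for every $q$ with $6/(\kappa+3)<q\le 2$. Sobolev embedding sends a one-derivative $L^q$ bound into $\dot H^{s}$ with $s=5/2-3/q$, and setting $s=s_p$ selects $q=3(p-1)/(p+1)$, which lies in the admissible range precisely when $\kappa>(5-p)/2$.

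Next I would invoke the decay $E_-(t)\le C(1+t)^{-\kappa}$ for $t\ge 0$ (and symmetrically for $t\le 0$), which is the announced consequence of the weighted Morawetz estimate and the inward-energy flux identity. Since $E_-(t)$ majorises $\frac{1}{2(p+1)}\|u(\cdot,t)\|_{L^{p+1}}^{p+1}$, one obtains $\|u(\cdot,t)\|_{L^{p+1}}\lesssim (1+t)^{-\kappa/(p+1)}$, hence
\[
 \|u\|_{L^q_t L^{p+1}_x((0,\infty)\times \Rm^3)}^q\lesssim \int_0^\infty (1+t)^{-q\kappa/(p+1)}\,dt<\infty
\]
whenever $q>(p+1)/\kappa$. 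The assumption $\kappa>(5-p)/2$ opens the non-empty window $(p+1)/\kappa<q<2(p+1)/(5-p)$, so such a $q$, strictly below the scattering-critical Strichartz value, can be fixed.

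Finally I would close the scattering argument by interpolating the sub-critical $L^q_tL^{p+1}_x$ control against the conserved energy to obtain a global bound at a genuine Strichartz-admissible exponent for the level $\dot H^{s_p}$, then using local well-posedness to split $(0,\infty)$ into finitely many subintervals on which the Duhamel remainder is small in the critical Strichartz norm. Passing to the limit yields the scattering profile $(v_0^+,v_1^+)$; membership in $(\dot H^1\cap \dot H^{s_p})\times (L^2\cap \dot H^{s_p-1})$ follows by combining energy conservation with the critical-level bound, and convergence at intermediate $s\in[s_p,1]$ is then by interpolation. The negative-time case is identical. I expect the chief technical obstacle to be the interpolation step that bridges the sub-critical $q<2(p+1)/(5-p)$ to a true Strichartz-admissible exponent at the $\dot H^{s_p}$ level, using the conserved energy to absorb the gap; once this is in hand, the standard perturbation and Strichartz machinery produces the full scattering statement.
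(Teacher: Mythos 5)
Your proposal follows essentially the same route as the paper's: H\"older plus Sobolev to place the data in $\dot{H}^{s_p}\times\dot{H}^{s_p-1}$ (the paper's Lemma \ref{initial data in theorem 2}), the weighted Morawetz decay $E_-(t)\lesssim K t^{-\kappa}$ from Corollary \ref{weighted Morawetz} to obtain $u\in L^qL^{p+1}(\Rm^+\times\Rm^3)$ for every $q>(p+1)/\kappa$ --- a window that reaches below the critical value $2(p+1)/(5-p)$ precisely because $\kappa>\frac{5-p}{2}$ --- and then a Strichartz bootstrap to upgrade this to $\|u\|_{L^{2(p-1)}L^{2(p-1)}}<\infty$ and scattering. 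The one place your plan needs adjusting is the level at which the bootstrap runs: you propose to bridge to a Strichartz-admissible exponent ``for the level $\dot H^{s_p}$,'' but the only uniform-in-time quantity available to anchor the iteration is the conserved energy, which lives in $\dot H^1\times L^2$. The paper's Lemma \ref{abstract inter} accordingly runs the entire continuity argument at the energy level: the bootstrap quantity is $\|u\|_{L^{q_2}L^{r_2}}+\|D_x^{1/2}u\|_{L^4L^4}$ with $1/q_2+3/r_2=1/2$, the nonlinearity is split by H\"older as $\|F'(u)\|_{L^2L^2}\le\|u\|_{L^{q_1}L^{p+1}}^{k_1}\|u\|_{L^{q_2}L^{r_2}}^{k_2}$ with the first factor made small on a tail interval $[t_0,\infty)$, and one first obtains scattering in $\dot H^1\times L^2$ together with $u\in L^{2(p-1)}L^{2(p-1)}$, only afterwards transferring to $\dot H^{s_p}\times\dot H^{s_p-1}$ via the scattering criterion of Proposition \ref{local existence} and interpolating for intermediate $s$ by Lemma \ref{scattering in different spaces}, exactly as you suggest at the end. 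Your ``finitely many subintervals'' variant is equivalent to the paper's single tail interval provided the subdivision is made with respect to the known-finite $L^qL^{p+1}$ norm and the iteration restarts from the energy norm at each left endpoint. A minor side remark: the exponent $q=3(p-1)/(p+1)$ lies in your admissible range already when $\kappa>\frac{5-p}{p-1}$, which is weaker than $\kappa>\frac{5-p}{2}$ for $p>3$; the sharper threshold is needed for the Morawetz window, not for membership in the critical space.
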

\begin{remark}
Initial data $(u_0,u_1)$ in Theorem \ref{main 2} also satisfy $ (u_0,u_1) \in \dot{H}^{s_p}\times \dot{H}^{s_p-1}(\Rm^3)$ by the Sobolev embedding. We put the details in Lemma \ref{initial data in theorem 2}. 
\end{remark}

\begin{remark}
Let $p$ and $\kappa$ be as in Theorem \ref{main 2}. We can prove the scattering of solution in the space $\dot{H}^1 \times L^2$ as $t \rightarrow +\infty$ as long as the total energy is finite and the inward energy satisfies 
 \[
  \int_{\Rm^3} (1+|x|^\kappa)\left(\frac{1}{4}\left|\mathbf{L}_+(u_0,u_1)\right|^2 +\frac{1}{4}|\slashed{\nabla} u_0|^2 + \frac{1}{2(p+1)} |u_0|^{p+1}\right) dx < +\infty,
 \]
 regardless of the size and decay rate of the outward energy. The idea comes form the author's work \cite{sheninward}:  the weighted Morawetz estimates for positive times depend on the inward energy of initial data only. This scattering result is in fact the major and key step to prove Theorem \ref{main 2}. Please see Section \ref{sec: proof main 2}.
\end{remark}

\begin{theorem} \label{main 3}
 Let $p=3$. If initial data $(u_0,u_1) \in \dot{H}^{1/2} \times \dot{H}^{-1/2}(\Rm^3)$ satisfy
 \[
   E_{1,0}(u_0,u_1) \doteq \int_{\Rm^3}  |x| \left(\frac{1}{2}|\nabla u_0(x)|^2 + \frac{1}{2}|u_1(x)|^2 + \frac{1}{4} |u_0(x)|^{4}\right) dx < +\infty,
\]
 then the corresponding solution $u$ to (CP1) with initial data $(u_0,u_1)$ must exist globally in time and scatter in both two time directions. More precisely, there exists $(v_0^\pm ,v_1^\pm) \in \dot{H}^{1/2} \times \dot{H}^{-1/2}$, so that 
 \[
  \lim_{t \rightarrow \pm \infty} \left\|\begin{pmatrix} u(\cdot,t)\\ \partial_t u(\cdot,t)\end{pmatrix} - 
  \mathbf{S}_L (t)\begin{pmatrix}v_0^\pm \\ v_1^\pm\end{pmatrix}\right\|_{\dot{H}^{1/2} \times \dot{H}^{-1/2}(\Rm^3)} = 0.
 \]
\end{theorem}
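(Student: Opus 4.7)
The critical Sobolev space when $p=3$ is $\dot{H}^{1/2}\times\dot{H}^{-1/2}$, and the standard scattering criterion for the cubic three-dimensional wave equation in this space is the global Strichartz bound $\|u\|_{L^4_{t,x}(\Rm\times\Rm^3)}<+\infty$, since $(q,r)=(4,4)$ is the admissible pair associated to $\dot{H}^{1/2}\times\dot{H}^{-1/2}$ data. My plan is to establish this global $L^4_{t,x}$ bound and then produce the scattering profiles by a standard Duhamel argument.

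The decisive input is the weighted Morawetz estimate of this paper specialized to $p=3$ and the borderline exponent $\kappa=1$, which I expect to read
\[
  \int_{0}^{+\infty}\!\!\int_{\Rm^3}\frac{(|x|+t)\bigl(|u(x,t)|^4+|\slashed{\nabla} u(x,t)|^2\bigr)}{|x|}\,dx\,dt \;\leq\; C\, E_{1,0}(u_0,u_1).
\]
Because $(|x|+t)/|x|\geq 1$ for $t\geq 0$, this immediately gives $\|u\|_{L^4_{t,x}(\Rm^+\times\Rm^3)}^4 \leq C\, E_{1,0}(u_0,u_1)$, and the negative-time analogue follows by time reversal. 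One delicate point is that $E_{1,0}(u_0,u_1)<+\infty$ alone does not guarantee finite energy, because the weight $|x|$ vanishes at the origin and gives no control of $\nabla u_0$ or $u_1$ near $x=0$; this is precisely why the critical Sobolev hypothesis $(u_0,u_1)\in\dot{H}^{1/2}\times\dot{H}^{-1/2}$ is imposed separately in Theorem \ref{main 3}. To bring the weighted Morawetz estimate, which is derived in the finite-energy setting, to bear on $u$, I would approximate $(u_0,u_1)$ by smooth truncated data $(u_0^n,u_1^n)$ converging simultaneously in $\dot{H}^{1/2}\times\dot{H}^{-1/2}$ and in the weighted norm $E_{1,0}$, with each $(u_0^n,u_1^n)$ of finite energy. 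The weighted Morawetz estimate then applies to each approximating solution $u_n$, yielding a uniform bound $\|u_n\|_{L^4_{t,x}(\Rm\times\Rm^3)}\leq C\,E_{1,0}(u_0^n,u_1^n)^{1/4}$, after which the stability theory for the cubic wave equation in the critical space forces the limiting solution $u$ to exist globally in $\dot{H}^{1/2}\times\dot{H}^{-1/2}$ and to inherit the same $L^4_{t,x}$ bound.

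Once the global Strichartz bound is secured, the scattering data are produced via
\[
  \begin{pmatrix}v_0^\pm\\ v_1^\pm\end{pmatrix} = \begin{pmatrix}u_0\\u_1\end{pmatrix} \mp \int_{0}^{\pm\infty}\mathbf{S}_L(-s)\begin{pmatrix}0\\ |u|^2 u(\cdot,s)\end{pmatrix}ds,
\]
where the nonlinearity $|u|^2 u$ lies in the dual Strichartz space $L^{4/3}_{t,x}$ with norm $\|u\|_{L^4_{t,x}}^3$, so the Cauchy criterion applied to the inhomogeneous Strichartz estimate supplies convergence in $\dot{H}^{1/2}\times\dot{H}^{-1/2}$ as $t\to\pm\infty$. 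The main technical hurdle is the derivation and deployment of the $\kappa=1$ weighted Morawetz estimate in the non-radial setting together with the accompanying approximation argument. Because $\kappa=1=(5-p)/2$ is exactly the borderline exponent when $p=3$, none of the slack available in Theorem \ref{main 2} can be used, and the $L^4_{t,x}$ scattering norm must be extracted directly from the weighted Morawetz estimate rather than indirectly through pointwise-in-time decay of the inward energy $E_-(t)$.
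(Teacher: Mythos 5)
Your overall architecture (reduce to a global $L^4_{t,x}$ bound, approximate the data by finite-energy truncations supported away from the origin, pass to the limit by stability, then scatter via Duhamel) matches the paper's proof. However, there is a genuine gap at the decisive step: the borderline weighted Morawetz estimate you invoke is not the one that is actually available, and the one that is available does not ``immediately'' give the $L^4_{t,x}$ bound. In Proposition \ref{Weighted Morawetz General} the coefficients of the space-time integral are $\frac{p-1-2\gamma}{2(p+1)}$ and $\frac{1-\gamma}{2}$; at $p=3$ and $\gamma=\kappa=1$ both vanish, so the estimate you wrote, with weight $\frac{|x|+t}{|x|}$, degenerates completely. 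Tracking the cancellation more carefully (Remark \ref{p3 case of weighted Morawetz} with $a(r)=r$) leaves the weight
\[
\frac{a(|x|+t)}{|x|}-\frac{a(|x|+t)}{|x|+t}=\frac{t}{|x|},
\]
i.e.\ the true conclusion is $\iint_{\Rm^3\times\Rm^+}\frac{t}{|x|}\bigl(\frac14|u|^4+\frac12|\slashed{\nabla}u|^2\bigr)\,dx\,dt\leq E_{1,0}$. Note that your claimed inequality decomposes as $\frac{|x|+t}{|x|}|u|^4=\frac{t}{|x|}|u|^4+|u|^4$, so it already contains $\|u\|_{L^4_{t,x}}^4$ as a summand; the extra piece is exactly the unweighted Morawetz integral, which is controlled only by the \emph{energy}, and the energy is not assumed (nor uniformly controlled along your approximating sequence). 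So the estimate you rely on cannot be extracted from $E_{1,0}$ alone by this machinery.

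With the correct weight $t/|x|$, the inference $\|u\|_{L^4_{t,x}}^4\lesssim E_{1,0}$ fails because $t/|x|$ is \emph{not} bounded below: it is small for small $t$ and for large $|x|$. The paper closes this gap with two additional ingredients that your proposal is missing: (i) Corollary \ref{tail estimate} (small-data scattering applied to the truncated tail of the data plus finite speed of propagation) gives $\int\!\!\int_{|x|>R+t}|u|^4\,dx\,dt<\infty$ for some large $R$, handling the exterior region where $t/|x|$ degenerates; and (ii) on the complementary region $\{|x|<R+t,\ t\geq T\}$ one has $t/|x|\geq T/(R+T)$, so the degenerate weighted Morawetz does control $\int_T^{T_+}\!\!\int_{|x|<R+t}|u|^4$, while the interval $[0,T]$ is covered by the local theory of Proposition \ref{local existence}. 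You need to add these steps; once they are in place, the conclusion $\|u\|_{L^4L^4}<\infty$ and the scattering construction you describe go through as in the paper.
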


\begin{remark}
 Both $\dot{H}^{s_p}\times \dot{H}^{s_p-1}(\Rm^3)$ norm and the weighted energy $E_{0,1}$ defined in Theorem \ref{main 3} are invariant under the natural rescaling of (CP1).
\end{remark}

\begin{remark}
 These results for possibly non-radial initial data are weaker than our results for radial solutions, i.e. we have to make stronger assumptions on the decay rate of energy. This is because we lose many tools to investigate the asymptotic behaviour of solutions in the non-radial case. For example, we may rewrite the equation in the radial case in the form of 
\[
 (\partial_t-\partial_r)(\partial_t +\partial_r)(ru) = -r|u|^{p-1}u.
\] 
This immediately gives explicit estimates on variance of $(\partial_t \pm \partial_r)(ru)$ along characteristic lines $t\mp r = \hbox{Const}$. In the non-raidal case, however, we have 
\[
 (\partial_t-\partial_r)(\partial_t +\partial_r)(ru) = -r|u|^{p-1} r  + \frac{1}{r \sin \theta} \partial_\theta (u_\theta \sin \theta) + \frac{u_{\varphi \varphi}}{r \sin^2 \theta}.
\]
We are no longer able to analyze the variance and asymptotic behaviour of $(\partial_r\pm \partial_t)(ru)$ conveniently due to the presence of the derivatives about $\theta, \varphi$.  For another example, we do not have the pointwise estimate $|u(r,t)| \lesssim r^{-4/(p+3)}$ for radial solutions with a finite energy. This significantly undermines the effectiveness of (weighted) Morawetz estimates.
\end{remark}
 
 \subsection{The Structure of This Paper}
 
This paper is organized as follows. In section 2 we recall the Strichartz estimates, local theory and the Morawetz estimates, then give a few preliminary results. Next in Section 3 we give a general formula of inward and outward energy fluxes in the nonraidal case. Section 4 is divided to two parts. In the first part we give a few energy distribution properties of solutions by the energy flux formula. In the second part we prove the weighted Morawetz estimate. Finally in Section 5 we prove Theorem \ref{main 2} by combining the weighted Morawetz estimate with the local theory. 
\section{Preliminary Results}

We start by reminding the readers about the $\lesssim$ notation.

\paragraph{The $\lesssim$ symbol} We use the notation $A \lesssim B$ if there exists a constant $c$, so that the inequality $A \leq c B$ always holds.  In addition, a subscript of the symbol $\lesssim$ indicates that the constant $c$ is determined by the parameter(s) mentioned in the subscript but nothing else. In particular, $\lesssim_1$ means that the constant $c$ is an absolute constant. 


\subsection{Technical Lemmata}

\begin{lemma} \label{identity of w u energy}
Let $u \in \dot{H}^{1}(\Rm^3)$. Then $\displaystyle \int_{\Rm^3} \left|\mathbf{L} u\right|^2 dx = \int_{\Rm^3} |u_r|^2 dx$.
\end{lemma}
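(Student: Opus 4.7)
The plan is to move to spherical coordinates and to rewrite everything in terms of $w=ru$, at which point the desired identity collapses to a boundary-term computation that vanishes.

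First I would assume $u \in C_c^\infty(\Rm^3)$ and handle the general case by density at the very end. With $w(r,\sigma)=r\,u(r,\sigma)$, one has $\mathbf{L} u = w_r/r$ and $u_r = w_r/r - w/r^2$. In spherical coordinates $dx = r^2\,dr\,d\sigma$, so
\[
 \int_{\Rm^3} |\mathbf{L} u|^2\,dx = \int_{S^2}\int_0^\infty |w_r|^2\,dr\,d\sigma,
\]
while
\[
 \int_{\Rm^3} |u_r|^2\,dx = \int_{S^2}\int_0^\infty \left(|w_r|^2 - \frac{2 w w_r}{r} + \frac{w^2}{r^2}\right)dr\,d\sigma.
\]

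Next I would recognise the extra integrand as an exact $r$-derivative: $-\frac{2ww_r}{r}+\frac{w^2}{r^2} = \frac{d}{dr}\!\left(-\frac{w^2}{r}\right)$. The fundamental theorem of calculus then gives
\[
 \int_0^\infty\!\left(-\frac{2 w w_r}{r}+\frac{w^2}{r^2}\right)dr = \Big[-\tfrac{w^2}{r}\Big]_0^\infty = \Big[-r\,u^2\Big]_0^\infty.
\]
For $u\in C_c^\infty(\Rm^3)$, the value of $u$ is bounded near $0$ so $ru^2\to 0$ as $r\to 0^+$, and $u$ vanishes for large $r$, so the boundary contribution is $0$ pointwise in $\sigma$ and integration over $S^2$ yields the claimed equality on the dense class $C_c^\infty(\Rm^3)$.

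Finally I would extend to arbitrary $u\in\dot{H}^1(\Rm^3)$ by density. The two functionals $u\mapsto \|u_r\|_{L^2}$ and $u\mapsto \|\mathbf{L} u\|_{L^2}$ are continuous on $\dot{H}^1$: the first by $|u_r|\leq|\nabla u|$, and the second by $|\mathbf{L} u|\leq |u_r|+|u|/|x|$ combined with the Hardy inequality $\||x|^{-1}u\|_{L^2}\lesssim \|\nabla u\|_{L^2}$ (valid in $\Rm^3$). Picking $u_n\in C_c^\infty(\Rm^3)$ with $u_n\to u$ in $\dot{H}^1$ and passing to the limit in the identity proved for each $u_n$ completes the proof. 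The only mildly delicate point is controlling the $|u|/r$ term, and this is handled cleanly by Hardy; beyond that the argument is a straightforward telescoping computation.
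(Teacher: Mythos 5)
Your proof is correct and rests on exactly the same total-derivative identity the paper uses (your $-\tfrac{2ww_r}{r}+\tfrac{w^2}{r^2}=\partial_r(-w^2/r)$ with $w=ru$ is literally the paper's $r^2|\mathbf{L}u|^2=r^2|u_r|^2+\partial_r(ru^2)$), with Hardy's inequality playing the same essential role. The only cosmetic difference is how the boundary terms are dispatched: you prove the identity for $C_c^\infty$ data, where they vanish trivially, and then extend to general $u\in\dot{H}^1(\Rm^3)$ by density and the boundedness of $u\mapsto\mathbf{L}u$ on $\dot{H}^1$, whereas the paper works with general $u$ directly on annuli $a<|x|<b$ and uses Hardy to produce radii $a\rightarrow 0^+$, $b\rightarrow+\infty$ along which $\frac{1}{r}\int_{|x|=r}|u|^2\,d\sigma_r\rightarrow 0$.
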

\begin{proof}
We first consider the integral over annulus $\{x\in \Rm^3: a<|x|<b\}$: 
\begin{align}
  \int_{a<|x|<b} \left|\mathbf{L} u\right|^2 dx 
   &=  \int_0^{2\pi} \int_0^{\pi} \int_a^b \left|u_r + \frac{u}{r}\right|^2 r^2 \sin \theta \,dr d\theta d\varphi \nonumber\\
  &= \int_0^{2\pi} \int_0^{\pi} \int_a^b \left[r^2 |u_r|^2 + \partial_r (ru^2)\right] \sin \theta\, dr d\theta d\varphi \nonumber\\
  &= \int_{a<|x|<b} |u_r|^2 dx + \frac{1}{b} \int_{|x|=b} |u|^2 d\sigma_b (x) - \frac{1}{a} \int_{|x|=a} |u|^2 d\sigma_a (x). \label{relationship of w and u}
\end{align}
Here $\sigma_r$ represents regular measure on sphere of radius $r$. Next we use Hardy's inequality and obtain 
\[
 \int_0^\infty \left(\frac{1}{r^2} \int_{|x|=r} |u(x)|^2 d\sigma_r(x)\right) dr = \int_{\Rm^3} \frac{|u(x)|^2}{|x|^2} dx \lesssim\|u\|_{\dot{H}^1}^2 < +\infty.
\]
As a result we have
\begin{align}
 &\liminf_{r \rightarrow 0^+} \frac{1}{r} \int_{|x|=r} |u(x)|^2 d\sigma_r(x) = 0;& &\liminf_{r \rightarrow +\infty} \frac{1}{r} \int_{|x|=r} |u(x)|^2 d\sigma_r(x) = 0;& \label{limit of two sides}
\end{align}
Finally we may make $a \rightarrow 0^+$ and $b \rightarrow +\infty$ in identity \eqref{relationship of w and u} with these two limits in mind to finish the proof.
\end{proof}

\begin{remark} \label{relationship of u w energy}
 If we use one limit at a time in identity \eqref{relationship of w and u}, we obtain two identities for any $\dot{H}^1(\Rm^3)$ function $u$ and any radius $R>0$
 \begin{align*}
  & \int_{|x|<R}  \left|\mathbf{L} u(x)\right|^2  dx = \int_{|x|<R} |u_r|^2 dx + \frac{1}{R} \int_{|x|=R} |u(x)|^2 d\sigma_R(x);\\
  & \int_{|x|>R}  \left|\mathbf{L} u(x)\right|^2  dx = \int_{|x|>R} |u_r|^2 dx -  \frac{1}{R} \int_{|x|=R} |u(x)|^2 d\sigma_R(x).
 \end{align*}
 This implies for any $\kappa > 0$ and $(u_0,u_1) \in \dot{H}^1 \times L^2$,
 \begin{align*}
   & \int_{\Rm^3} |x|^\kappa \left[\frac{1}{4}|\mathbf{L}_+ (u_0,u_1)|^2 + \frac{1}{4}|\mathbf{L}_- (u_0,u_1)|^2 + \frac{1}{2}|\slashed{\nabla} u_0|^2 + \frac{1}{p+1}|u_0|^{p+1}\right] dx \\
   = &  \int_{\Rm^3} |x|^{\kappa} \left[\frac{|\slashed{\nabla} u_0|^2}{2} + \frac{|u_1|^2}{2} + \frac{|u_0|^{p+1}}{p+1}\right] dx
   + \int_0^\infty \kappa R^{\kappa-1} \left( \int_{|x|>R} \frac{1}{2}\left|\mathbf{L} u_0(x)\right|^2 dx\right) dR\\
   \leq & \int_{\Rm^3} |x|^{\kappa} \left[\frac{|\slashed{\nabla} u_0|^2}{2} + \frac{|u_1|^2}{2} + \frac{|u_0|^{p+1}}{p+1}\right] dx
   +  \int_0^\infty \kappa R^{\kappa-1} \left( \int_{|x|>R} \frac{1}{2}|\partial_r u_0(x)|^2 dx\right) dR\\
   = & \int_{\Rm^3} |x|^\kappa \left[\frac{1}{2}|\nabla u_0|^2+ \frac{1}{2}|u_1|^2 + \frac{1}{p+1}|u_0|^{p+1}\right]  dx.
 \end{align*}
\end{remark}

\begin{lemma} \label{initial data in theorem 2}
Let $3\leq p<5$ and $\kappa>\frac{5-p}{2}$. If $(u_0,u_1) \in \dot{H}^1 \times L^2(\Rm^3)$ satisfies
\[
 E_{\kappa}^\ast (u_0,u_1) = \int_{\Rm^3} (|x|^\kappa+1)\left(|\nabla u_0|^2 + |u_1|^2\right) dx < +\infty.
\]
then we also have $(u_0,u_1) \in \dot{H}^{s_p} \times \dot{H}^{s_p-1} (\Rm^3)$.
\end{lemma}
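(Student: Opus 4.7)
The main observation is that the desired conclusion reduces, via the identity $\|u_0\|_{\dot H^{s_p}} \simeq \|\nabla u_0\|_{\dot H^{s_p-1}}$ (Plancherel), to a single generic claim: any $f\in L^2(\Rm^3)$ with $\int_{\Rm^3}(1+|x|^\kappa)|f|^2\,dx<+\infty$ lies in $\dot H^{s_p-1}(\Rm^3)$. Granting this and applying it to $f = u_1$ and to each $f = \partial_j u_0$ immediately gives $u_1 \in \dot H^{s_p-1}$ and $\nabla u_0 \in \dot H^{s_p-1}$, hence $u_0\in\dot H^{s_p}$, as required.

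To establish the generic claim I would argue in two short steps. Set $q = 3(p-1)/(p+1)$, which satisfies $1<q<2$ for $3\le p<5$. First, write $w(x)=1+|x|^\kappa$ and apply Hölder with conjugate exponents $2/q$ and $2/(2-q)$ to obtain
\[
\int_{\Rm^3} |f|^q\,dx \le \left(\int_{\Rm^3} w\,|f|^2\,dx\right)^{q/2}\left(\int_{\Rm^3} w^{-q/(2-q)}\,dx\right)^{(2-q)/2}.
\]
The first factor is finite by hypothesis. The second factor is finite iff $\kappa q/(2-q)>3$ (the near-origin behaviour is harmless since $w\ge 1$); a direct computation reduces this to $\kappa > (5-p)/(p-1)$. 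Thus $f\in L^q(\Rm^3)$. Second, the dual Sobolev (Hardy--Littlewood--Sobolev) embedding $L^q(\Rm^3)\hookrightarrow \dot H^{-\sigma}(\Rm^3)$ holds precisely when $\tfrac{1}{q}=\tfrac{1}{2}+\tfrac{\sigma}{3}$. Plugging $\sigma = 1-s_p = (5-p)/(2(p-1))$ yields exactly $q = 3(p-1)/(p+1)$, so $f\in \dot H^{-\sigma}=\dot H^{s_p-1}$.

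The only non-bookkeeping point is checking that the hypothesis $\kappa>(5-p)/2$ indeed supplies the needed $\kappa>(5-p)/(p-1)$; this is where the restriction $p\ge 3$ enters. Since $p-1\ge 2$, one has $(5-p)/(p-1)\le (5-p)/2<\kappa$, which closes the argument. I do not expect any serious obstacle: the proof is essentially algebraic once the two exponents $q$ and $\sigma$ are matched up. The degenerate case $p=5$ ($s_p=1$, $\dot H^{s_p-1}=L^2$) and the borderline case $p=3$ (where $(5-p)/2=(5-p)/(p-1)=1$, so the strict inequality in $\kappa$ is essential) are both handled uniformly by this chain of inequalities.
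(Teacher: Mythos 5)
Your proof is correct and follows essentially the same route as the paper: both reduce the claim to showing $\nabla u_0, u_1 \in L^{q}$ with $q=3(p-1)/(p+1)$ via H\"older's inequality against the weight (finiteness of the weight integral requiring exactly $\kappa q/(2-q)>3$, i.e.\ $\kappa>(5-p)/(p-1)$, which follows from $\kappa>(5-p)/2$ and $p\ge 3$), and then invoke the Sobolev embedding $L^{q}\hookrightarrow \dot H^{s_p-1}$. The paper merely phrases the last step as $\dot W^{1,q}\times L^{q}\hookrightarrow \dot H^{s_p}\times \dot H^{s_p-1}$ rather than applying the scalar embedding to $u_1$ and to each $\partial_j u_0$, which is the same thing.
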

\begin{proof}
By the Sobolev embedding $\dot{W}^{1,\frac{3(p-1)}{p+1}} \times L^\frac{3(p-1)}{p+1} \hookrightarrow \dot{H}^{s_p} \times \dot{H}^{s_p-1}$, it suffices to show  $(u_0,u_1) \in \dot{W}^{1,\frac{3(p-1)}{p+1}} \times L^\frac{3(p-1)}{p+1}$. This immediately follows H\"{o}lder's inequality
 \begin{align*}
  & \int_{\Rm^3} \left(|\nabla u_0|^\frac{3(p-1)}{p+1} + |u_1|^\frac{3(p-1)}{p+1}\right) dx \\
  \lesssim & \left[\int_{\Rm^3} (|x|+1)^\kappa \left(|\nabla u_0|^2 + |u_1|^2\right) dx\right]^{\frac{3(p-1)}{2(p+1)}} \left[\int_{\Rm^3} (|x|+1)^{-\frac{3(p-1)\kappa}{5-p}}dx \right]^{\frac{5-p}{2(p+1)}}\\
  \lesssim & \left(E_{\kappa}^\ast(u_0,u_1)\right)^{\frac{3(p-1)}{2(p+1)}} < +\infty. 
 \end{align*}
 Here we have $\frac{3(p-1)\kappa}{5-p} > \frac{3(p-1)}{2}\geq 3$. 
\end{proof}
\begin{lemma} \label{cutoff lemma}
Fix $s \in [-1,1]$. Let $\phi: \Rm^3 \rightarrow [0,1]$ be a fixed radial, smooth cut-off function satisfying 
\[
  \phi(x) = \left\{\begin{array}{ll}1, & \hbox{if}\; |x|\geq 1;\\ 0, & \hbox{if}\; |x|<1/2. \end{array}\right.
 \] 
 Then the operators $\{\mathbf{P}_r\}_{r\in \Rm^+}$ defined by $(\mathbf{P}_r f)(x) = \phi(x/r)\cdot f(x)$ are uniformly bounded from $\dot{H}^s(\Rm^3)$ to itself. 
 \[
  \left\|\mathbf{P}_r f\right\|_{\dot{H}^s(\Rm^3)} \lesssim_s \|f\|_{\dot{H}^s(\Rm^3)}.
 \]
 In addition, given any $f \in \dot{H}^s(\Rm^3)$, we have 
 \begin{align*}
  &\lim_{r \rightarrow 0^+} \|\mathbf{P}_r f -f\|_{\dot{H}^s(\Rm^3)} = 0;& &\lim_{r \rightarrow +\infty} \|\mathbf{P}_r f\|_{\dot{H}^s(\Rm^3)} = 0.&
 \end{align*}
\end{lemma}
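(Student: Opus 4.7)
The plan is to treat uniform boundedness and the two limits separately, handling first the endpoints $s=-1,0,1$ and then covering intermediate values by interpolation. Boundedness is trivial for $s=0$ since $0\le\phi\le 1$. For $s=1$ I would apply the product rule
\[
\nabla\bigl(\phi(x/r) f(x)\bigr) = \phi(x/r)\nabla f(x) + r^{-1}(\nabla\phi)(x/r) f(x),
\]
and control the first term by $\|\nabla f\|_{L^2}$ and the second by Hardy's inequality, since $(\nabla\phi)(x/r)$ is supported on $\{r/2\le|x|\le r\}$, where $r^{-1}\lesssim|x|^{-1}$. For $s=-1$, the operator $\mathbf{P}_r$ is multiplication by a real-valued function and is formally self-adjoint on $L^2$, so duality against $\dot{H}^1$ gives the bound with the same constant. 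Intermediate values of $s$ then follow from interpolation.

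For the limit statements I would use a density-plus-uniform-bound argument, with test class $C_c^\infty(\Rm^3)$. For $f\in C_c^\infty$ and $r\to+\infty$, once $r$ exceeds twice the radius of $\mathrm{supp}(f)$ we have $\mathbf{P}_r f=0$, which is immediate. For $f\in C_c^\infty$ and $r\to 0^+$, the difference $(\phi(x/r)-1)f$ is supported in $\{|x|\le r\}$; direct estimates of the two pieces produced by the product rule give
\[
\|(\phi(x/r)-1)f\|_{\dot{H}^1}\lesssim r^{1/2}\|f\|_{L^\infty}+\|\nabla f\|_{L^2(|x|\le r)}\to 0,
\]
while the embedding $L^{6/5}(\Rm^3)\hookrightarrow\dot{H}^{-1}(\Rm^3)$ yields $\|(\phi(x/r)-1)f\|_{\dot{H}^{-1}}\lesssim r^{5/2}\|f\|_{L^\infty}\to 0$. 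The interpolation inequality $\|g\|_{\dot{H}^s}\le\|g\|_{\dot{H}^{-1}}^{(1-s)/2}\|g\|_{\dot{H}^1}^{(1+s)/2}$ then gives convergence at every $s\in[-1,1]$. Finally, given arbitrary $f\in\dot{H}^s$ and $\eps>0$, pick $g\in C_c^\infty$ with $\|f-g\|_{\dot{H}^s}<\eps$ and apply the uniform bound from the first step to the error; this extends both limits from $C_c^\infty$ to all of $\dot{H}^s$.

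I expect the main obstacle to be the clean treatment of the negative-regularity range $s\in[-1,0)$: neither uniform boundedness nor pointwise-in-$r$ convergence admits a direct estimate there, and both must be routed through the $s=1$ case via duality and interpolation. A secondary point is to confirm that $C_c^\infty(\Rm^3)$ is indeed dense in $\dot{H}^s(\Rm^3)$ throughout $s\in[-1,1]$, which is standard for $s\in(-3/2,3/2)$ but is the ingredient that makes the final extension work.
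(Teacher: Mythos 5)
Your proposal is correct and follows essentially the same route as the paper: the $s=1$ bound via the product rule and Hardy's inequality, the trivial $L^2$ bound, interpolation to cover $[0,1]$, duality for the negative range, and a density-plus-uniform-boundedness argument for the two limits (the paper uses the Schwartz class where you use $C_c^\infty$, an immaterial difference). Your explicit estimates for the compactly supported case merely fill in what the paper leaves as ``clear.''
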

\begin{proof}
 First of all, we can verify that $\|\mathbf{P}_r f\|_{\dot{H}^1} \lesssim_1 \|f\|_{\dot{H}^1}$ by a direct calculation 
 \[
  \int_{\Rm^3} \left|\nabla (\phi(x/r) f(x))\right|^2 dx \lesssim_1 \int_{\Rm^3} \left(|\nabla f(x)|^2 + \frac{|f(x)|^2}{|x|^2}\right) dx \lesssim_1 \|f\|_{\dot{H}^1}^2. 
 \]
 Here we apply Hardy's inequality. It is trivial that $\|\mathbf{P}_r f\|_{L^2} \leq \|f\|_{L^2}$. Therefore an interpolation immediately gives the uniform boundedness of $\mathbf{P}_r$ from $\dot{H}^s$ to itself for any $s \in [0,1]$. By duality this uniform boundedness is true for $s \in [-1,0]$ as well. Next let us prove the two limits as $r \rightarrow 0^+$ and $r \rightarrow \infty$. If $f$ is in the Schwartz class, then it is clear that the limits hold. In the general case we recall the fact that the Schwartz class is dense in the space $\dot{H}^s$ and apply the standard approximation techniques. Here we need to use the uniform boundedness of the operators $\mathbf{P}_r$. 
\end{proof}

\subsection{Strichartz estimates and Local Theory}

\paragraph{Strichartz estimates} The following Strichartz estimates on solutions to the linear wave equation play a key role in the local well-posedness  theory of nonlinear wave equations. Please see Proposition 3.1 in Ginibre-Velo \cite{strichartz}. Here we use the Sobolev version in dimension 3.
\begin{proposition} [Generalized Strichartz estimates] Let $2\leq q_1,q_2 \leq \infty$, $2\leq r_1,r_2 < \infty$ and $\rho_1,\rho_2,s\in \Rm$ be constants with 
\begin{align*}
 &1/q_i+1/r_i \leq 1/2, \; i=1,2; & &1/q_1+3/r_1=3/2-s+\rho_1;& &1/q_2+3/r_2=1/2+s+\rho_2.&
\end{align*}
Assume that $u$ is the solution to the linear wave equation
\[
 \left\{\begin{array}{ll} \partial_t u - \Delta u = F(x,t), & (x,t) \in \Rm^3 \times [0,T];\\
 u|_{t=0} = u_0 \in \dot{H}^s; & \\
 \partial_t u|_{t=0} = u_1 \in \dot{H}^{s-1}. &
 \end{array}\right.
\]
Then we have
\begin{align*}
 \left\|\left(u(\cdot,T), \partial_t u(\cdot,T)\right)\right\|_{\dot{H}^s \times \dot{H}^{s-1}} & +\|D_x^{\rho_1} u\|_{L^{q_1} L^{r_1}([0,T]\times \Rm^3)} \\
 & \leq C\left(\left\|(u_0,u_1)\right\|_{\dot{H}^s \times \dot{H}^{s-1}} + \left\|D_x^{-\rho_2} F(x,t) \right\|_{L^{\bar{q}_2} L^{\bar{r}_2} ([0,T]\times \Rm^3)}\right).
\end{align*}
Here the coefficients $\bar{q}_2$ and $\bar{r}_2$ satisfy $1/q_2 + 1/\bar{q}_2 = 1$, $1/r_2 + 1/\bar{r}_2 = 1$. The constant $C$ does not depend on $T$ or $u$. 
\end{proposition}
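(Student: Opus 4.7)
The plan is to split $u = u_{\text{hom}} + u_{\text{inh}}$ where $u_{\text{hom}}$ solves the homogeneous wave equation with data $(u_0,u_1)$ and $u_{\text{inh}}$ is the Duhamel integral of $F$. The control on the $\dot H^s \times \dot H^{s-1}$ norm of $(u(\cdot,T),\partial_t u(\cdot,T))$ follows from a standard energy identity after pairing the equation with $\partial_t u$ and integrating by parts, so the real content is the space-time mixed-norm bound on $D_x^{\rho_1} u$. To unify signs I would write $u_{\text{hom}}$ via the half-wave decomposition $U_\pm(t) = e^{\pm it\sqrt{-\Delta}}$, reducing everything to estimates of the form $\|D_x^{\rho_1} U_\pm(t) f\|_{L^{q_1}L^{r_1}} \lesssim \|f\|_{\dot H^s}$ with $1/q_1 + 3/r_1 = 3/2 - s + \rho_1$.

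For the homogeneous half-wave estimate I would invoke the standard two ingredients: the trivial $L^2$ isometry $\|U_\pm(t) f\|_{L^2} = \|f\|_{L^2}$, and the dispersive bound $\|U_\pm(t) P_N f\|_{L^\infty} \lesssim N^2 (1+N|t|)^{-1} \|P_N f\|_{L^1}$ for data frequency-localized at scale $N$, which one gets from a stationary phase analysis of the oscillatory Fourier kernel in $\Rm^3$. Riesz--Thorin interpolation then produces the $L^{r'}\to L^r$ decay estimate with the correct exponent, and the Keel--Tao abstract machinery delivers a dyadic Strichartz bound for every wave-admissible pair $(q,r)$ with $2\le q\le\infty$, $2\le r<\infty$, $1/q+1/r\le 1/2$, and the correct scaling relation. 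Rescaling using the wave dilation symmetry fixes $N=1$ and the derivative gain $\rho_1$ is exactly the gap between the scaling exponent and $s$; summing over dyadic $N$ via Littlewood--Paley square function estimates (legitimate because $r_1<\infty$) recovers the full homogeneous inequality.

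For the inhomogeneous piece I would dualize the homogeneous estimate with respect to the admissible pair $(q_2,r_2)$ to bound the \emph{untruncated} operator $\int_{\Rm} \frac{\sin((t-\tau)\sqrt{-\Delta})}{\sqrt{-\Delta}} F(\tau)\,d\tau$ from $L^{\bar q_2}L^{\bar r_2}$ into $L^{q_1}L^{r_1}$ with the prescribed derivative counts. The retarded truncation $\tau<t$ is then inserted via the Christ--Kiselev lemma, which applies precisely because the hypotheses $q_i\ge 2$ and $r_i\ge 2$ force $\bar q_i \le q_i$, so strict inequality on the time variable exponents holds off the sharp endpoint; the sharp endpoint case is covered directly by a $TT^*$ argument as in Ginibre--Velo \cite{strichartz}. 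The main technical obstacle in all of this is simultaneously tracking the derivative losses $\rho_1,\rho_2$ and allowing non-sharp admissibility: one must handle the Littlewood--Paley summation uniformly across the full parameter range, which is exactly why the statement excludes $r_i=\infty$ and why the Christ--Kiselev step has to replace the more elementary sharp-admissible $TT^*$ reasoning.
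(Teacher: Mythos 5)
The paper does not actually prove this proposition: it is quoted as a known result, with the reader referred to Proposition 3.1 of Ginibre--Velo \cite{strichartz}. Your outline is therefore competing with that reference rather than with an in-paper argument, and as a sketch of the standard modern proof it is structurally sound: homogeneous/inhomogeneous splitting, frequency-localized dispersive estimate plus $L^2$ conservation, Keel--Tao, Littlewood--Paley summation (using $r_1<\infty$), and Christ--Kiselev to restore the retarded truncation away from the $q_1=q_2=2$ endpoint. Note that Ginibre--Velo's own (earlier) proof handles the non-endpoint inhomogeneous estimates by a direct $TT^*$/Hardy--Littlewood--Sobolev argument rather than by dualization plus Christ--Kiselev; both routes give the non-endpoint statement, and your route is the one that generalizes most cleanly.

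Three points need repair. First, in $\Rm^3$ the frequency-localized dispersive bound for the half-wave group is $\|U_\pm(t)P_Nf\|_{L^\infty}\lesssim N^3(1+N|t|)^{-1}\|P_Nf\|_{L^1}$, not $N^2(1+N|t|)^{-1}$; the exponent $2$ belongs to the propagator $\sin(t\sqrt{-\Delta})/\sqrt{-\Delta}$, which gains one derivative. Second, Keel--Tao by itself produces only the sharp-admissible pairs $1/q+1/r=1/2$; the non-sharp range $1/q+1/r<1/2$ allowed in the hypotheses must be recovered by Sobolev embedding in the spatial variable, which is also where the bookkeeping $1/q_1+3/r_1=3/2-s+\rho_1$ is actually consumed. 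Third, and most substantively, your claim that the $\dot{H}^s\times\dot{H}^{s-1}$ bound on $(u(\cdot,T),\partial_t u(\cdot,T))$ ``follows from a standard energy identity after pairing the equation with $\partial_t u$'' is not right for the stated right-hand side: pairing with $\partial_t u$ (or with $D_x^{2s-2}\partial_t u$) only controls the solution by $\|D_x^{-\rho}F\|_{L^1L^2}$-type norms, whereas the proposition demands control by $\|D_x^{-\rho_2}F\|_{L^{\bar q_2}L^{\bar r_2}}$ with a general dual admissible pair. That estimate is the $(q,r)=(\infty,2)$ member of the same Strichartz family and requires the dual homogeneous ($T^*$) estimate for the pair $(q_2,r_2)$; it is not an elementary energy identity, and it should be folded into the same $TT^*$/Christ--Kiselev machinery as the mixed-norm bound rather than treated as a separate trivial step.
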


\paragraph{Chain rule} We also need the following ``chain rule'' for fractional derivatives. Please refer to Christ-Weinstein \cite{fchain}, Kenig et al. \cite{fchain2}, Staffilani \cite{fchain3} and Taylor \cite{fchain4} for more details. 
\begin{lemma} \label{chain rule}
 Assume a function $F$ satisfies $F(0) = F'(0) = 0$ and 
 \begin{align*}
  &|F'(a+b)| \leq C(|F'(a)| + |F'(b)|),& &|F''(a+b)| \leq C(|F''(a)| + |F''(b)|),&
 \end{align*}
 for all $a,b\in \Rm$. Then we have
 \[
  \|D^\alpha F(u)\|_{L^p (\Rm^3)} \leq C \|D^\alpha u\|_{L^{p_1} (\Rm^3)} \|F'(u)\|_{L^{p_2} (\Rm^3)}
 \]
 for $0<\alpha<1$ and $1/p = 1/p_1+1/p_2$, $1<p_1,p_2<\infty$.
\end{lemma}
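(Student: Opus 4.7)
The plan is to reduce the bound to a pointwise difference-quotient representation of $D^\alpha$ and then exploit the structural hypotheses on $F'$ to factor the resulting estimate as a product. Since this is a classical statement (as the cited references indicate), I would follow the Christ–Weinstein / Kenig–Ponce–Vega line of argument rather than anything new.

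First I would replace the left-hand side by the Stein-type equivalent norm valid for $0<\alpha<1$ and $1<p<\infty$:
\[
 \|D^\alpha g\|_{L^p(\Rm^3)} \approx \left\|\left( \int_{\Rm^3} \frac{|g(\cdot + y) - g(\cdot)|^2}{|y|^{3+2\alpha}}\,dy \right)^{1/2}\right\|_{L^p(\Rm^3)},
\]
applied to $g = F(u)$. Then I would use the fundamental theorem of calculus to write
\[
 F(u(x+y)) - F(u(x)) = \bigl(u(x+y)-u(x)\bigr)\int_0^1 F'\bigl((1-t)u(x) + t u(x+y)\bigr)\,dt,
\]
and invoke the subadditivity hypothesis on $F'$, together with $F'(0)=0$ and the $F''$ subadditivity (which via $F'(\sigma) = \int_0^\sigma F''$ forces $|F'(\lambda a)| \lesssim |F'(a)|$ for $|\lambda|\leq 1$), to deduce the pointwise bound
\[
 \bigl|F(u(x+y)) - F(u(x))\bigr| \lesssim |u(x+y)-u(x)|\,\bigl(|F'(u(x))| + |F'(u(x+y))|\bigr).
\]

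Substituting back into the square function, I would split into two contributions. The $|F'(u(x))|$ term factors out of the $y$-integral and leaves exactly the square function representation of $D^\alpha u(x)$, producing the pointwise bound $|F'(u(x))| \cdot D^\alpha u(x)$. The $|F'(u(x+y))|$ term requires dominating a convolution-type quantity by $\mathcal{M}(F'(u))(x) \cdot D^\alpha u(x)$, where $\mathcal{M}$ is the Hardy–Littlewood maximal function (or a slight variant obtained from Cauchy–Schwarz over dyadic annuli in $y$). A final application of Hölder's inequality with exponents $p_1,p_2$, combined with the $L^{p_2}$-boundedness of $\mathcal{M}$ (which is why we need $p_2>1$), closes the estimate.

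The main obstacle is the second step: justifying the pointwise bound $|F'((1-t)a + tb)| \lesssim |F'(a)| + |F'(b)|$ uniformly in $t\in[0,1]$ from the stated hypotheses. This is precisely where the $F''$ assumption enters, via writing $F'((1-t)a+tb) = F'(a) + \int_a^{(1-t)a+tb} F''$ and applying the subadditivity of $F''$ to distribute the integral; the extra factor of $|b-a|$ can then be absorbed against the prefactor $|u(x+y)-u(x)|$ without loss. The remainder of the argument is a bookkeeping exercise combining the square-function characterization with the maximal inequality, and I would not expect any genuinely new obstruction beyond this analytical step.
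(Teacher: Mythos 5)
The paper does not actually prove this lemma: it is quoted as a classical result with pointers to Christ--Weinstein, Kenig--Ponce--Vega, Staffilani and Taylor, so there is no internal proof to compare against. Your outline is the standard Christ--Weinstein strategy (a first-order difference estimate for $F(u)$, a convex-combination bound on $F'$, domination by a maximal function, then H\"older), so at the level of architecture you have chosen the right proof.

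Two concrete points would need repair before this counts as a proof. First, the Stein square-function equivalence $\|D^\alpha g\|_{L^p}\approx\|\mathcal{D}_\alpha g\|_{L^p}$ is only valid for $p>2n/(n+2\alpha)$; in $\Rm^3$ with $\alpha=1/2$ this means $p>3/2$, which excludes the exponent $p=4/3$ at which the lemma is actually invoked in Lemma \ref{abstract inter} (there $p=4/3$, $p_1=4$, $p_2=2$), and it certainly does not cover the full stated range $1<p_1,p_2<\infty$. The standard proof avoids this by working with the Littlewood--Paley square function $\big\|\big(\sum_j 2^{2j\alpha}|P_jF(u)|^2\big)^{1/2}\big\|_{L^p}$, which is equivalent to $\|D^\alpha F(u)\|_{L^p}$ for all $1<p<\infty$, and by estimating each piece $P_jF(u)$ pointwise by maximal functions. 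Second, your deduction that the hypotheses force $|F'(\lambda a)|\lesssim|F'(a)|$ for $|\lambda|\le 1$ does not go through: writing $F'(\lambda a)=\int_0^{\lambda a}F''$ yields no upper bound without sign information on $F''$, and applying the subadditivity hypothesis to the decomposition $\sigma=\lambda\sigma+(1-\lambda)\sigma$ produces only \emph{lower} bounds on $|F'|$ at intermediate points; your alternative via $F'((1-t)a+tb)=F'(a)+\int F''$ reproduces the same difficulty one derivative down and also injects an extra factor of $|b-a|$ that destroys the bilinear structure $\|D^\alpha u\|_{L^{p_1}}\|F'(u)\|_{L^{p_2}}$. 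The usual formulation of the lemma simply takes the convex-combination bound $|F'(\tau a+(1-\tau)b)|\le C\left(|F'(a)|+|F'(b)|\right)$, uniformly in $\tau\in[0,1]$, as a hypothesis; for the only nonlinearity used in this paper, $F(u)=-|u|^{p-1}u$ with $F'(u)=-p|u|^{p-1}$, that bound is immediate, so nothing is lost in the application, but as a deduction from the literally stated hypotheses this step is a genuine gap.
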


\paragraph{Local theory} The local theory is a consequence of the Strichartz estimates and a fixed-point argument. Here we only give a few results that will be used later in this work. Please see Kapitanski \cite{loc1} and Lindblad-Sogge \cite{ls}, for instance, for more results and details about the local theory. We start by a few results with initial data in the critical Sobolev spaces.

\begin{proposition}[Existence and scattering criterion] \label{local existence}
For any initial data $(u_0,u_1) \in \dot{H}^{s_p} \times \dot{H}^{s_p-1}$, there exists a unique solution $u$ to (CP1) with a maximal lifespan $(-T_-,T_+)$ so that $(u(\cdot,t), \partial_t u(\cdot,t)) \in C((-T_-,T_+); \dot{H}^{s_p}\times \dot{H}^{s_p-1})$ and 
\begin{align*}
  \|u\|_{L^{2(p-1)}L^{2(p-1)}([a,b] \times \Rm^3)}<+\infty ,\; -T_-<a<b<T_+.
\end{align*}
In particular, if $\|u\|_{L^{2(p-1)}L^{2(p-1)}([0,T_+) \times \Rm^3)}<+\infty$, then $T_+=\infty$ and the solution $u$ scatters\footnote{When we mention scattering, we always assume the scattering happens in the critical Sobolev space $\dot{H}^{s_p} \times \dot{H}^{s_p-1}$ unless other space is specified.} in the positive time direction.
\end{proposition}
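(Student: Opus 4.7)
The plan is the standard critical-regularity fixed-point construction, combined with a long-time iteration for the scattering criterion.

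First I would verify that $(q,r) = (2(p-1),2(p-1))$ is an $\dot H^{s_p}$-admissible Strichartz pair in the sense of the Generalized Strichartz estimates stated above: it satisfies the scaling condition $1/q + 3/r = 2/(p-1) = 3/2 - s_p$ with $\rho_1 = 0$, and the admissibility restriction $1/q + 1/r \le 1/2$ reduces to $p \ge 3$, which is exactly our hypothesis. This pair provides the ``scattering norm'' $L^{2(p-1)}L^{2(p-1)}$ appearing in the statement.

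Next I would set up a contraction argument for the Duhamel map
\[
 \Phi(u)(t) = \mathbf{S}_L(t)(u_0,u_1) - \int_0^t \frac{\sin((t-s)|\nabla|)}{|\nabla|}\bigl(|u|^{p-1}u\bigr)(s)\,ds
\]
on a ball in an appropriate Strichartz-type space on a short interval $[0,T]$. To close the estimate at the critical level I would work with two norms in tandem: the scattering norm $\|u\|_{L^{2(p-1)}L^{2(p-1)}}$ and a derivative norm $\|D_x^{s_p}u\|_{L^{q_0}L^{r_0}}$ for a second $\dot H^{s_p}$-admissible pair $(q_0,r_0)$, chosen so that H\"older in space-time and the fractional chain rule (Lemma \ref{chain rule}) give
\[
 \|D_x^{s_p}(|u|^{p-1}u)\|_{L^{\bar q_0}L^{\bar r_0}} \;\lesssim\; \|u\|_{L^{2(p-1)}L^{2(p-1)}}^{p-1}\|D_x^{s_p}u\|_{L^{q_0}L^{r_0}},
\]
and an analogous bound for $F(u)-F(v)$. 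The exponent $s_p$ lies in $[1/2,1)$ for $3\le p<5$, which is in the admissible range of Lemma \ref{chain rule}. Choosing $T$ small enough that the free evolution of $(u_0,u_1)$ has $L^{2(p-1)}L^{2(p-1)}$ norm below a fixed threshold $\eta$, the Strichartz estimates make $\Phi$ a contraction, producing the unique local solution $u$ with $(u,u_t)\in C([0,T];\dot H^{s_p}\times\dot H^{s_p-1})$ and $u\in L^{2(p-1)}L^{2(p-1)}([0,T]\times \Rm^3)$. Iterating the construction from later base times yields the maximal lifespan $(-T_-,T_+)$, and the Strichartz bound on each piece gives $\|u\|_{L^{2(p-1)}L^{2(p-1)}([a,b]\times\Rm^3)}<\infty$ for every compact subinterval.

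For the scattering criterion, suppose $\|u\|_{L^{2(p-1)}L^{2(p-1)}([0,T_+)\times\Rm^3)}<\infty$. I would partition $[0,T_+)$ into finitely many subintervals $I_k = [t_k,t_{k+1})$ on each of which $\|u\|_{L^{2(p-1)}L^{2(p-1)}(I_k\times\Rm^3)}<\eta$. Applying the Strichartz estimate on each $I_k$ with data $(u(t_k),u_t(t_k))$ and summing across $k$ yields a uniform bound $\sup_{t\in[0,T_+)}\|(u,u_t)\|_{\dot H^{s_p}\times\dot H^{s_p-1}}<\infty$, which by the blow-up criterion forces $T_+=\infty$. For the asymptotic linear profile, I would set
\[
 \begin{pmatrix}v_0^+\\ v_1^+\end{pmatrix} = \begin{pmatrix}u_0\\ u_1\end{pmatrix} + \int_0^\infty \mathbf{S}_L(-s)\begin{pmatrix}0\\ |u|^{p-1}u(s)\end{pmatrix}ds,
\]
where the improper integral converges in $\dot H^{s_p}\times \dot H^{s_p-1}$ by the same global Strichartz bound. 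The difference $(u,u_t)(t) - \mathbf{S}_L(t)(v_0^+,v_1^+)$ equals the Duhamel tail $\int_t^\infty(\cdots)\,ds$, whose $\dot H^{s_p}\times\dot H^{s_p-1}$ norm is controlled by the dual Strichartz norm of the nonlinearity over $[t,\infty)\times\Rm^3$, which tends to $0$ as $t\to+\infty$ by dominated convergence.

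The main technical obstacle is the bookkeeping in the fractional chain rule: one has to select the auxiliary admissible pair $(q_0,r_0)$ and the corresponding Lebesgue exponents so that Lemma \ref{chain rule} and H\"older close into a self-consistent system of Strichartz inequalities. Once this exponent choice is fixed, the remainder of the argument is essentially mechanical.
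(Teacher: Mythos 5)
Your proposal is the standard critical-regularity fixed-point argument (Strichartz admissibility of the pair $(2(p-1),2(p-1))$, a two-norm contraction closed by the fractional chain rule, and the small-norm interval decomposition for the scattering criterion), which is exactly the route the paper indicates: it does not prove this proposition itself but cites Kapitanski and Lindblad--Sogge, remarking only that the local theory "is a consequence of the Strichartz estimates and a fixed-point argument." Your sketch is correct and consistent with that standard treatment, including the verification that $1/q+1/r\le 1/2$ reduces to $p\ge 3$ and that $s_p\in[1/2,1)$ keeps the chain rule applicable.
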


\begin{proposition}[Scattering with small initial data] \label{scattering with small initial data}
 There exists a constant $\delta = \delta(p)>0$, so that if the initial data satisfy $\|(u_0,u_1)\|_{\dot{H}^{s_p} \times \dot{H}^{s_p-1}} < \delta$, then the corresponding solution $u$ to (CP1) exists globally in time and scatters with $\|u\|_{L^{2(p-1)} L^{2(p-1)}(\Rm \times \Rm^3)} < + \infty$. 
\end{proposition}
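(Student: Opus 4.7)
The plan is the standard small-data Picard iteration built on top of the Strichartz machinery just recorded. I would rewrite (CP1) as a Duhamel fixed-point equation $u = \Phi(u)$, where $\Phi$ is the sum of the free linear evolution $\mathbf{S}_L(t)(u_0,u_1)$ and the Duhamel integral of the nonlinearity $-|u|^{p-1}u$ against the linear wave propagator, and look for a global-in-time fixed point in a suitable Strichartz-type space.

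First I would apply the generalized Strichartz estimate with $s = s_p$, $q_1 = r_1 = 2(p-1)$, $\rho_1 = 0$. The admissibility condition $1/q_1 + 1/r_1 = 1/(p-1) \leq 1/2$ is precisely the hypothesis $p \geq 3$, while the scaling identity $1/q_1 + 3/r_1 = 2/(p-1) = 3/2 - s_p$ is automatic. Hence
\[
 \|\mathbf{S}_L(t)(u_0,u_1)\|_{L^{2(p-1)} L^{2(p-1)}(\Rm\times\Rm^3)} \leq C\|(u_0,u_1)\|_{\dot{H}^{s_p}\times\dot{H}^{s_p-1}} < C\delta.
\]

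To close the contraction on the nonlinear term I would carry out the iteration in a product space of the form
\[
 \|u\|_X \doteq \|u\|_{L^{2(p-1)} L^{2(p-1)}(\Rm\times\Rm^3)} + \|D_x^{s_p} u\|_{L^{q_0} L^{r_0}(\Rm\times\Rm^3)},
\]
with an auxiliary Strichartz pair $(q_0,r_0)$ matching the scaling of $\dot{H}^{s_p}$. Combining H\"older (to peel off $p-1$ factors of $u$ in $L^{2(p-1)}L^{2(p-1)}$) with the fractional chain rule of Lemma \ref{chain rule} applied to $F(u) = |u|^{p-1}u$, one obtains a nonlinear estimate of the schematic form
\[
 \bigl\|D_x^{-\rho_2}(|u|^{p-1}u)\bigr\|_{L^{\bar q_2} L^{\bar r_2}} \lesssim \|u\|_{L^{2(p-1)} L^{2(p-1)}}^{p-1} \|D_x^{s_p} u\|_{L^{q_0} L^{r_0}},
\]
for a dual Strichartz pair $(\bar q_2,\bar r_2)$ whose predual $(q_2,r_2)$ satisfies the admissibility and scaling conditions of the Strichartz proposition at $s = s_p$. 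Feeding this into the Strichartz bound and using the elementary inequality $|F(a)-F(b)| \lesssim (|a|^{p-1}+|b|^{p-1})|a-b|$ for the difference estimate, I would obtain
\[
 \|\Phi(u)\|_X \leq C\delta + C\|u\|_X^p, \qquad \|\Phi(u)-\Phi(v)\|_X \leq C(\|u\|_X+\|v\|_X)^{p-1}\|u-v\|_X.
\]
For $\delta$ small enough, $\Phi$ is then a strict contraction on the ball of radius $2C\delta$ in $X$, producing a unique global-in-time fixed point $u \in X$. In particular $\|u\|_{L^{2(p-1)} L^{2(p-1)}(\Rm\times\Rm^3)} < \infty$, so Proposition \ref{local existence} delivers global existence and scattering in $\dot{H}^{s_p}\times\dot{H}^{s_p-1}$.

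The main technical obstacle is the bookkeeping of Strichartz exponents: the pairs $(q_0,r_0)$ and $(q_2,r_2)$ must simultaneously satisfy admissibility, the correct scaling relative to $\dot{H}^{s_p}$, and the H\"older/chain-rule balance for the $p$-th power nonlinearity. The fractional chain rule in Lemma \ref{chain rule} requires $0 < s_p < 1$, which is guaranteed by the range $3 \leq p < 5$; the lower bound $p \geq 3$ is exactly what makes the output pair $(2(p-1),2(p-1))$ admissible in the first place, so the entire small-data theory closes cleanly throughout the subcritical range considered in this paper.
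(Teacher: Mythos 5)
The paper does not actually prove this proposition---it is quoted as part of the standard local theory with the reader referred to Kapitanski \cite{loc1} and Lindblad--Sogge \cite{ls}---and your Duhamel/contraction-mapping argument is exactly the standard route those references take, with the admissibility and scaling bookkeeping for the output pair $(2(p-1),2(p-1))$ at regularity $s_p$ done correctly. The one step that does not follow from the tools as you state them is the Lipschitz bound $\|\Phi(u)-\Phi(v)\|_X\lesssim(\|u\|_X+\|v\|_X)^{p-1}\|u-v\|_X$ in the full norm $X$: Lemma~\ref{chain rule} controls $D_x^{s_p}F(u)$ but not $D_x^{s_p}\bigl(F(u)-F(v)\bigr)$, since the difference of compositions is not itself a composition; the standard repair is to run the contraction in the weaker metric $\|u-v\|_{L^{2(p-1)}L^{2(p-1)}}$ (supplemented by an undifferentiated admissible Strichartz norm when $p>3$, where the diagonal dual pair fails admissibility) on the ball $\{\,\|u\|_X\le 2C\delta\,\}$, which is complete for that metric, after which the rest of your argument goes through unchanged.
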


\begin{corollary} \label{tail estimate}
 If $u$ is a solution to (CP1) defined in the time interval $(-T_-,T_+)$ with initial data $(u_0,u_1) \in \dot{H}^{s_p} \times \dot{H}^{s_p-1}$, then there exists a large radius $R$, so that
\[
 \int_{-T_-}^{T_+} \int_{|x|>R+|t|} |u(x,t)|^{2(p-1)} dxdt < +\infty.
\]
\end{corollary}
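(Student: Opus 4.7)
The idea is to reduce the estimate to small-data scattering on a truncation of the initial data outside a large ball, and then transfer the resulting Strichartz bound back to $u$ via finite speed of propagation. Let $\phi$ and $\mathbf{P}_r$ be as in Lemma \ref{cutoff lemma}.

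First, by Lemma \ref{cutoff lemma} applied at regularities $s=s_p$ and $s=s_p-1$, we have
\[
 \lim_{R\to+\infty} \left(\|\mathbf{P}_R u_0\|_{\dot{H}^{s_p}} + \|\mathbf{P}_R u_1\|_{\dot{H}^{s_p-1}}\right) = 0.
\]
Thus we may fix $R>0$ so large that the truncated pair $(\mathbf{P}_R u_0,\mathbf{P}_R u_1)$ has $\dot{H}^{s_p}\times\dot{H}^{s_p-1}$ norm strictly less than the threshold $\delta(p)$ provided by Proposition \ref{scattering with small initial data}.

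Let $v$ be the solution to (CP1) with initial data $(\mathbf{P}_R u_0,\mathbf{P}_R u_1)$. By Proposition \ref{scattering with small initial data}, $v$ exists globally in time with $\|v\|_{L^{2(p-1)}L^{2(p-1)}(\Rm\times\Rm^3)}<+\infty$. Since $\mathbf{P}_R u_0$ and $\mathbf{P}_R u_1$ agree with $u_0$ and $u_1$ respectively on $\{|x|\geq R\}$, finite speed of propagation for the nonlinear wave equation gives $u(x,t)=v(x,t)$ at every point $(x,t)\in (-T_-,T_+)\times\Rm^3$ with $|x|>R+|t|$. Therefore
\[
 \int_{-T_-}^{T_+}\int_{|x|>R+|t|} |u(x,t)|^{2(p-1)}\,dxdt \;\leq\; \|v\|_{L^{2(p-1)}L^{2(p-1)}(\Rm\times\Rm^3)}^{2(p-1)} \;<\; +\infty,
\]
which is the required bound.

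The only delicate technical point is justifying finite speed of propagation at the critical regularity $\dot{H}^{s_p}\times\dot{H}^{s_p-1}$, since we cannot directly invoke pointwise arguments. This is standard: one approximates $(u_0,u_1)$ and $(\mathbf{P}_R u_0,\mathbf{P}_R u_1)$ by smooth compactly supported data for which finite speed of propagation is classical, and then passes to the limit using the Strichartz stability/perturbation estimates built into the local well-posedness theory of Proposition \ref{local existence}. Both $u$ and $v$ have locally finite $L^{2(p-1)}L^{2(p-1)}$ norm on their existence intervals, so the approximation is uniform on compact time intervals and the pointwise identification on the exterior cone $\{|x|>R+|t|\}$ passes through in the limit.
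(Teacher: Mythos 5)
Your proposal is correct and follows essentially the same route as the paper: truncate the data with $\mathbf{P}_R$ so that Lemma \ref{cutoff lemma} puts it below the small-data threshold of Proposition \ref{scattering with small initial data}, obtain the global $L^{2(p-1)}L^{2(p-1)}$ bound for the auxiliary solution $v$, and transfer it to $u$ on the exterior region $\{|x|>R+|t|\}$ by finite speed of propagation. Your additional remark on justifying finite speed of propagation at critical regularity by approximation is a reasonable elaboration of a point the paper leaves implicit.
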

\begin{proof}
 Let us sketch out the proof. We recall the smooth cut-off operator $\mathbf{P}_r$ defined in Lemma \ref{cutoff lemma}. When $R$ is sufficiently large, we always have 
 \[
  \left\|(\mathbf{P}_R u_0, \mathbf{P}_R u_1)\right\|_{\dot{H}^{s_p} \times \dot{H}^{s_p-1}} < \delta.
 \]
 Here $\delta$ is the constant in Proposition \ref{scattering with small initial data}. We fix such a large radius $R$ and apply Proposition \ref{scattering with small initial data} to obtain that the corresponding solution $v$ to (CP1) with initial data $(\mathbf{P}_R u_0, \mathbf{P}_R u_1)$ satisfies 
 \[
  \int_{-\infty}^\infty \int_{\Rm^3} |v(x,t)|^{2(p-1)} dx dt < + \infty.
 \]
Since the initial data of $u$ and $v$ are exactly the same in the region $\{x\in \Rm^3: |x|>R\}$, we immediately finishes the proof by finite speed of propagation of wave equation.
\end{proof}

\noindent We also have continuous dependence of solutions on initial data. The following result is a direct consequence of the long time perturbation theory. Please see Lemma 2.5 of \cite{cubic3dwave} and Theorem 2.12 of \cite{shen2}, for example.
\begin{proposition} [Continuous dependence on initial data] \label{continuous dependence}
 Let $u$ be a solution to (CP1) with initial data $(u_0,u_1) \in \dot{H}^{s_p} \times \dot{H}^{s_p-1}$ and a maximal lifespan $I$. If $(u_{0,n},u_{1,n})$ is a sequence of initial data satisfying 
 \[
  \lim_{n \rightarrow +\infty} \|(u_{0,n},u_{1,n})-(u_0,u_1)\|_{\dot{H}^{s_p}\times \dot{H}^{s_p-1}} = 0,
 \]
 then the corresponding solutions $u_n$ to (CP1) with initial data $(u_{0,n}, u_{1,n})$ satisfy 
 \[
  \lim_{n\rightarrow +\infty} \sup_{t \in J} \|(u_n(\cdot,t),\partial_t u_n(\cdot,t))-(u(\cdot,t),\partial_t u(\cdot,t))\|_{\dot{H}^{s_p}\times \dot{H}^{s_p-1}} = 0,
 \]
 for any fixed compact subinterval $J \subset I$.
\end{proposition}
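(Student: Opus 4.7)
The plan is a long-time perturbation argument built from the Strichartz estimates and the fractional chain rule (Lemma \ref{chain rule}), combined with a partition of the compact subinterval into pieces on which the scattering norm of the reference solution $u$ is small. The overall strategy is standard, but I will sketch the bookkeeping since it relies on exactly the tools collected in this section.

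First I would fix $J=[a,b]\subset I$ and use Proposition \ref{local existence} to write $M\doteq\|u\|_{L^{2(p-1)}L^{2(p-1)}(J\times\Rm^3)}<+\infty$. Pick a small threshold $\eta>0$, depending on $p$ and the Strichartz constants, and partition $J=J_1\cup\cdots\cup J_k$ so that $\|u\|_{L^{2(p-1)}L^{2(p-1)}(J_i\times\Rm^3)}<\eta$ on each $J_i=[t_{i-1},t_i]$. The pair $(q_1,r_1)=(2(p-1),2(p-1))$ with $\rho_1=0$ is an admissible Strichartz pair at regularity $s_p$ for $p\geq 3$, and an auxiliary fractional Strichartz norm of $u$ carrying $D^{s_p}u$ is simultaneously controlled via Lemma \ref{chain rule}, possibly after refining the partition.

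Next I would run the stability estimate on a typical $J_i$. Setting $w_n=u_n-u$, the difference satisfies the linear wave equation with source $F_n=|u|^{p-1}u-|u_n|^{p-1}u_n$, which is pointwise bounded by $(|u|^{p-1}+|w_n|^{p-1})|w_n|$. Applying Strichartz to $w_n$ at regularity $s_p$ and estimating the source in the dual Strichartz norm by H\"older and Lemma \ref{chain rule}, I expect a closed inequality of the schematic form
\[
N_i(w_n)\leq C\,\|(w_n,\partial_t w_n)(\cdot,t_{i-1})\|_{\dot H^{s_p}\times\dot H^{s_p-1}}+C\bigl(\eta^{p-1}+N_i(w_n)^{p-1}\bigr)N_i(w_n),
\]
where $N_i(w_n)$ packages the scattering norm and the auxiliary fractional Strichartz norm of $w_n$ on $J_i$. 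For $\eta$ small and $\|(w_n,\partial_t w_n)(\cdot,t_{i-1})\|_{\dot H^{s_p}\times\dot H^{s_p-1}}$ below an $\eta$-dependent threshold, a continuity/bootstrap argument absorbs the nonlinear terms and yields $N_i(w_n)\leq C_\eta\,\|(w_n,\partial_t w_n)(\cdot,t_{i-1})\|_{\dot H^{s_p}\times \dot H^{s_p-1}}$, together with the same control on $\sup_{t\in J_i}\|(w_n,\partial_t w_n)(\cdot,t)\|_{\dot H^{s_p}\times \dot H^{s_p-1}}$.

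Iterating across $J_1,\ldots,J_k$, each step amplifies the $\dot H^{s_p}\times \dot H^{s_p-1}$ discrepancy at the matching endpoint by a fixed constant $C_\eta$, and since $k$ depends only on $u$ and $J$, for $n$ large this produces
\[
\sup_{t\in J}\|(u_n-u,\partial_t u_n-\partial_t u)(\cdot,t)\|_{\dot H^{s_p}\times \dot H^{s_p-1}}\leq C(J,u)\,\|(u_{0,n}-u_0,u_{1,n}-u_1)\|_{\dot H^{s_p}\times \dot H^{s_p-1}},
\]
which tends to zero by assumption. The main obstacle is the fractional nature of $s_p$: the difference $|u|^{p-1}u-|u_n|^{p-1}u_n$ cannot be Leibniz-expanded directly, so Lemma \ref{chain rule} is essential and the Strichartz pairs must be matched carefully so that the H\"older estimates on the source actually close. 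A secondary technical point is verifying a priori finiteness of $N_i(w_n)$ so that the bootstrap can start on $J_1$; this comes from local well-posedness on a sufficiently short initial subinterval, after which the iteration propagates closeness across all of $J$.
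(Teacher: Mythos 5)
Your proposal is the standard long-time perturbation argument (partition $J$ so the $L^{2(p-1)}_{t,x}$ norm of $u$ is small on each piece, close a Strichartz/fractional-chain-rule stability estimate on each piece, and iterate), which is exactly what the paper does: it states the proposition as ``a direct consequence of the long time perturbation theory'' and cites Lemma 2.5 of Dodson--Lawrie and Theorem 2.12 of Shen for the details you reconstruct. The technical caveats you flag (the fractional derivative of the difference of nonlinearities and the a priori finiteness needed to start the bootstrap) are precisely the points handled in those references, so your sketch matches the paper's intended proof.
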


\noindent We can also consider local theory of energy subcritical wave equation with initial data in the energy space. 

\begin{lemma}[See Lemma 4.2 of \cite{sheninward}]\label{lemma local in energy space}
Assume $3\leq p<5$. Let $(u_0,u_1) \in \dot{H}^1 \times L^2(\Rm^3)$ be initial data. Then the Cauchy problem (CP1) has a unique solution $u$ in the time interval $[0,T]$ with 
 \begin{align*}
   &(u,u_t) \in C([0,T];\dot{H}^1 \times L^2(\Rm^3));& &u \in L^{\frac{2p}{p-3}}L^{2p} ([0,T]\times \Rm^3).&
 \end{align*}
 Here the minimal time length of existence $T = C_p \|(u_0,u_1)\|_{\dot{H}^1\times L^2}^{\frac{-2(p-1)}{5-p}}$.
\end{lemma}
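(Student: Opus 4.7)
The plan is a Picard iteration on the Duhamel form
\[
 u(t) = \mathbf{S}_L(t)(u_0,u_1) - \int_0^t \frac{\sin\!\bigl((t-s)\sqrt{-\Delta}\,\bigr)}{\sqrt{-\Delta}}\bigl(|u|^{p-1}u\bigr)(s)\,ds,
\]
driven by the Strichartz inequality of the preceding proposition. I would select the ``source'' pair $(q_1,r_1)=\bigl(\tfrac{2p}{p-3},\,2p\bigr)$ (interpreted as $(\infty,6)$ when $p=3$) with $s=1$ and $\rho_1=0$: it is wave-admissible since $\tfrac{1}{q_1}+\tfrac{3}{r_1}=\tfrac{1}{2}$ and $\tfrac{1}{q_1}+\tfrac{1}{r_1}=\tfrac{p-2}{2p}\leq\tfrac{1}{2}$. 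On the forcing side I would use the energy pair $(q_2,r_2)=(\infty,2)$ with $\rho_2=0$, whose dual is $(\bar q_2,\bar r_2)=(1,2)$. The Strichartz inequality then yields
\[
 \|u\|_{L^{q_1}_t L^{r_1}_x([0,T]\times\Rm^3)} + \sup_{t\in[0,T]}\|(u(t),u_t(t))\|_{\dot H^1\times L^2} \leq C\Bigl(\|(u_0,u_1)\|_{\dot H^1\times L^2} + \||u|^{p-1}u\|_{L^1_t L^2_x}\Bigr).
\]

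The key nonlinear bound is the elementary H\"older computation $\int_0^T\|u(t)\|_{L^{2p}}^p\,dt \leq T^{(5-p)/2}\|u\|_{L^{q_1}_t L^{r_1}_x}^p$, which is where the subcritical hypothesis $p<5$ enters through the positivity of $\tfrac{5-p}{2}$, combined with $\||u|^{p-1}u\|_{L^2_x}=\|u\|_{L^{2p}_x}^p$. Setting $M := 2C\|(u_0,u_1)\|_{\dot H^1\times L^2}$, the Duhamel map $\Phi$ sends the closed ball $B_M=\{u\in L^{q_1}_t L^{r_1}_x:\|u\|\leq M\}$ into itself as soon as $C\,T^{(5-p)/2}M^{p-1}\leq\tfrac{1}{2}$, which rearranges to
\[
 T \leq C_p \|(u_0,u_1)\|_{\dot H^1\times L^2}^{-2(p-1)/(5-p)},
\]
exactly the quantitative lifespan claimed in the lemma.

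Contraction of $\Phi$ on $B_M$ follows from the pointwise inequality $\bigl||a|^{p-1}a-|b|^{p-1}b\bigr|\lesssim(|a|^{p-1}+|b|^{p-1})|a-b|$ together with the same three-factor H\"older split (exponents $\tfrac{q_1}{p-1}$ on each $|u|^{p-1}$ or $|v|^{p-1}$ factor in space-time, $q_1$ on $u-v$, and the residual time factor $T^{(5-p)/2}$), after possibly shrinking $C_p$. Banach's theorem then produces the unique fixed point $u\in L^{q_1}_t L^{r_1}_x$ on $[0,T]$; re-inserting $u$ into the Duhamel identity and invoking the first half of the Strichartz estimate forces $(u,u_t)\in C([0,T];\dot H^1\times L^2)$, and uniqueness in the full class is a standard consequence of the contraction inequality on every subinterval $[0,\tau]\subset[0,T]$. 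The one mildly delicate point I anticipate is the strong continuity in the energy space: the continuity of $\mathbf{S}_L(t)(u_0,u_1)$ is classical, but for the Duhamel integral I would argue by density, approximating $(u_0,u_1)$ with Schwartz data and exploiting the uniform Strichartz control of $|u|^{p-1}u$ in $L^1_t L^2_x$ to upgrade weak to norm continuity.
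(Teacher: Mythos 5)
Your contraction-mapping proof is correct: the pair $(q_1,r_1)=\bigl(\tfrac{2p}{p-3},2p\bigr)$ is admissible at regularity $s=1$, the H\"older step in time produces exactly the factor $T^{(5-p)/2}$, and the resulting lifespan $T=C_p\|(u_0,u_1)\|_{\dot H^1\times L^2}^{-2(p-1)/(5-p)}$ matches the statement. The paper itself gives no proof here --- it only cites Lemma 4.2 of \cite{sheninward} --- and your argument is precisely the standard Strichartz/Picard iteration that result rests on, so there is nothing to add.
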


\noindent Now let us consider a solution $u$ to (CP1) with a finite energy. The energy conservation law implies that the norm $\|(u(\cdot,t), u_t(\cdot,t))\|_{\dot{H}^1 \times L^2} \lesssim E^{1/2}$ is uniformly bounded for all time $t$ in the maximal lifespan of $u$. According to Lemma \ref{lemma local in energy space}, there exists a constant $T=T(p,E)$, so that if $u$ is still defined at time $t$, then $u$ is also defined for all time in $[t,t+T]$. It immediately follows that any solution to (CP1) with a finite energy exists globally in time.
\begin{proposition} \label{global existence finite energy}
Let $3\leq p<5$. If $u$ is a solution to (CP1) with a finite energy, then $u$ is defined for all time $t \in \Rm$. 
\end{proposition}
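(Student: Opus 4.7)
The plan is the standard blow-up/continuation argument driven by the uniform local existence window furnished by Lemma \ref{lemma local in energy space}. I would argue by contradiction, supposing the maximal forward lifespan $T_+$ of $u$ is finite (the backward case is identical by time reversal symmetry of (CP1)).

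First I would use the energy conservation law to extract a uniform-in-time bound on the energy-space norm. Since
\[
 E = \int_{\Rm^3} \left(\frac{1}{2}|\nabla u(x,t)|^2 + \frac{1}{2}|u_t(x,t)|^2 + \frac{1}{p+1}|u(x,t)|^{p+1}\right) dx
\]
is constant along the lifespan and the first two terms alone control $\|(u(\cdot,t),u_t(\cdot,t))\|_{\dot H^1 \times L^2}^2$, we obtain
\[
 \|(u(\cdot,t),u_t(\cdot,t))\|_{\dot H^1\times L^2} \leq \sqrt{2E}, \qquad t \in (-T_-,T_+).
\]

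Next I would invoke Lemma \ref{lemma local in energy space} slice by slice. Because the norm above is bounded by a constant depending only on $E$, the minimal time of existence produced by that lemma from \emph{any} initial time $t_0 \in (-T_-,T_+)$ is bounded below by
\[
 T_\ast(p,E) \doteq C_p (2E)^{-(p-1)/(5-p)} > 0,
\]
which is independent of $t_0$. Pick $t_0$ with $T_+ - t_0 < T_\ast(p,E)/2$ and apply the lemma to the Cauchy problem with data $(u(\cdot,t_0),u_t(\cdot,t_0))$; this yields a solution on $[t_0,t_0+T_\ast(p,E)]$, which by uniqueness in the energy space agrees with $u$ on $[t_0,T_+)$ and therefore extends $u$ strictly past $T_+$. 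This contradicts the maximality of $T_+$, so $T_+ = +\infty$, and symmetrically $T_- = +\infty$.

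I do not foresee any real obstacle: the only nontrivial ingredient is the uniform lower bound on the local existence time, and this is exactly what Lemma \ref{lemma local in energy space} provides, with the exponent $-2(p-1)/(5-p)$ reflecting the subcritical scaling $3\leq p<5$. Everything else is the routine continuation/uniqueness machinery for nonlinear wave equations.
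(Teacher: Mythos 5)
Your proof is correct and follows essentially the same route as the paper: the energy conservation law gives the uniform bound $\|(u(\cdot,t),u_t(\cdot,t))\|_{\dot H^1\times L^2}\lesssim E^{1/2}$, and Lemma \ref{lemma local in energy space} then supplies a local existence time bounded below uniformly in the starting time, which rules out a finite maximal lifespan. The paper states this continuation argument more briefly, but the content is identical to yours.
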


\paragraph{Scattering in different spaces} Finally we give a technical lemma about scattering in different spaces. 
\begin{lemma} \label{scattering in different spaces} 
Assume that a solution to (CP1) scatters in two different levels of Sobolev spaces ($1/2 \leq s_1<s_2 \leq 1$)
\[
 \lim_{t \rightarrow + \infty} \left\|\begin{pmatrix} u(\cdot,t)\\ \partial_t u(\cdot,t)\end{pmatrix} - 
  \mathbf{S}_L (t)\begin{pmatrix}v_0^{(i)} \\ v_1^{(i)}\end{pmatrix}\right\|_{\dot{H}^{s_i} \times \dot{H}^{s_i-1}(\Rm^3)} = 0, \; i=1,2.
\]
Then we always have $(v_0^{(1)}, v_1^{(1)}) = (v_0^{(2)}, v_1^{(2)})$ and 
\[
 \lim_{t \rightarrow + \infty} \left\|\begin{pmatrix} u(\cdot,t)\\ \partial_t u(\cdot,t)\end{pmatrix} - 
  \mathbf{S}_L (t)\begin{pmatrix}v_0^{(1)} \\ v_1^{(1)}\end{pmatrix}\right\|_{\dot{H}^{s} \times \dot{H}^{s-1}(\Rm^3)} = 0, \; s \in [s_1,s_2].
\]
\end{lemma}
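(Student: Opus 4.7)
The plan has two steps: identify the two scattering profiles first, then recover the intermediate-level scattering by interpolation of homogeneous Sobolev norms. For the identification, I would set $(\Phi(t),\Psi(t)) = \mathbf{S}_L(-t)(u(\cdot,t),\partial_t u(\cdot,t))$; since $\mathbf{S}_L(-t)$ is an isometry of each $\dot{H}^s\times\dot{H}^{s-1}$, the two hypotheses read $(\Phi(t),\Psi(t))\to (v_0^{(i)},v_1^{(i)})$ strongly in $\dot{H}^{s_i}\times\dot{H}^{s_i-1}$ for $i=1,2$. I would then test these two limits against Schwartz pairs $(\phi,\psi)$ whose Fourier transforms vanish in a neighbourhood of the origin, so that $(\phi,\psi)$ lies simultaneously in $\dot{H}^{-s}\times\dot{H}^{1-s}$ for every $s\in \Rm$. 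Dualizing the two convergences and using uniqueness of the scalar limit forces
\[
 \langle v_0^{(1)}-v_0^{(2)},\phi\rangle + \langle v_1^{(1)}-v_1^{(2)},\psi\rangle = 0
\]
for all such test pairs, which by density implies $(v_0^{(1)},v_1^{(1)}) = (v_0^{(2)},v_1^{(2)})$. Call this common profile $(v_0,v_1)$.

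With the profiles identified, set $w(t) = (u(\cdot,t),\partial_t u(\cdot,t))-\mathbf{S}_L(t)(v_0,v_1)$ and write any $s\in[s_1,s_2]$ as $s=\theta s_1 + (1-\theta)s_2$ with $\theta\in[0,1]$. The logarithmic convexity of homogeneous Sobolev norms,
\[
 \|f\|_{\dot{H}^s} \leq \|f\|_{\dot{H}^{s_1}}^{\theta}\|f\|_{\dot{H}^{s_2}}^{1-\theta},
\]
an immediate consequence of H\"older applied to the Fourier-side factorisation $|\xi|^{2s} = |\xi|^{2\theta s_1}\cdot|\xi|^{2(1-\theta)s_2}$, together with its analogue at level $s-1$, reduces the required convergence $\|w(t)\|_{\dot{H}^s\times\dot{H}^{s-1}}\to 0$ to the two endpoint convergences in $\dot{H}^{s_i}\times\dot{H}^{s_i-1}$ already supplied by hypothesis.

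The main obstacle is conceptual rather than computational: the profiles $(v_0^{(i)},v_1^{(i)})$ live a priori in different homogeneous Sobolev spaces, and their equality must be asserted inside a common ambient space of tempered distributions where uniqueness of limits is unambiguous. Testing against Schwartz functions with Fourier support away from the origin resolves this cleanly, since such functions lie in every homogeneous Sobolev space simultaneously. Once this hurdle is cleared, the interpolation step is routine.
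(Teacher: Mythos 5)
Your proof is correct and takes essentially the same route as the paper: undo the linear flow using the isometry of $\mathbf{S}_L(-t)$ on each $\dot{H}^s\times\dot{H}^{s-1}$, identify the two profiles via uniqueness of limits in the sense of tempered distributions, and obtain the intermediate levels by interpolation between the two endpoint norms. Your duality argument with test functions whose Fourier transforms vanish near the origin merely makes explicit what the paper leaves implicit in the phrase ``limit in the sense of tempered distribution.''
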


\begin{proof}
 Since the operator $\mathbf{S}_L(t)$ preserves $\dot{H}^s \times \dot{H}^{s-1}$ norms, we have
 \[
 \lim_{t \rightarrow + \infty} \left\|\mathbf{S}_L (-t)\begin{pmatrix} u(\cdot,t)\\ \partial_t u(\cdot,t)\end{pmatrix} - 
  \begin{pmatrix}v_0^{(i)} \\ v_1^{(i)}\end{pmatrix}\right\|_{\dot{H}^{s_i} \times \dot{H}^{s_i-1}(\Rm^3)} = 0, \; i=1,2.
\]
This means that both $(v_0^{(1)}, v_1^{(1)})$ and $(v_0^{(2)}, v_1^{(2)})$ are the limit of $\mathbf{S}_L(-t) (u(\cdot,t),\partial_t u(\cdot,t))$ as $t\rightarrow +\infty$ in the sense of tempered distribution. Thus $(v_0^{(1)}, v_1^{(1)}) = (v_0^{(2)}, v_1^{(2)})$. The scattering of $u$ in the space $\dot{H}^s \times \dot{H}^{s-1}$ with $s \in (s_1,s_2)$ then follows an interpolation between $s_1$ and $s_2$. 
\end{proof}

\subsection{Morawetz Estimates} \label{sec:morawetz}
 
We first recall the classic Morawetz estimate for wave equation, as given in Perthame and Vega's work \cite{benoit}. Here we use the 3-dimensional case.
\begin{theorem} 
Let $u$ be a solution to (CP1) defined in a time interval $[0,T]$ with a finite energy $E$. Then we have the following inequality for any $R>0$. Here $\sigma_R$ is the regular surface measure of the sphere $|x|=R$. 
\begin{align}
 & \frac{1}{2R}\int_0^T \!\!\int_{|x|<R}(|\nabla u|^2+|u_t|^2) dx dt + \frac{1}{2R^2} \int_0^T \!\!\int_{|x|=R} |u|^2 d\sigma_R dt + \frac{p-2}{(p+1)R} \int_0^T \!\!\int_{|x|<R} |u|^{p+1} dx dt \nonumber \\
 & \quad + \frac{p-1}{p+1} \int_0^T \int_{|x|>R} \frac{|u|^{p+1}}{|x|} dx dt + \int_0^T \int_{|x|>R} \frac{|\slashed{\nabla} u|^2}{|x|} dx dt + \frac{1}{R^2} \int_{|x|<R} |u(x,T)|^2 dx \leq 2E. \label{morawetz1}
\end{align}
\end{theorem}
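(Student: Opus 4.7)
The plan is to apply the classical multiplier method with a radial weight that treats the inside and outside of the ball $|x|<R$ differently. Specifically, I would multiply the equation $\partial_t^2 u - \Delta u + |u|^{p-1} u = 0$ by a combination of the form $\eta_R(|x|)\, \mathbf{L} u + \tfrac{1}{2}\eta_R'(|x|)\, u$, where $\eta_R(r) = \min(r,R)$ (to be justified by a smooth approximation). The motivation for this choice is that $\eta_R'(r)$ equals $1$ on the ball and $0$ outside, while $\eta_R(r)/r$ equals $1$ on the ball and $R/r$ outside; after dividing through by $R$, these weights match precisely the factors $1/R$, $1/|x|$, and $1/R^2$ appearing in \eqref{morawetz1}.

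Next, I would integrate the resulting pointwise identity over $[0,T]\times \Rm^3$. The $\partial_t^2 u$ contribution forms a full time derivative whose integral reduces to boundary terms at $t=0$ and $t=T$; these are controlled by the energy through Cauchy--Schwarz and Hardy's inequality, and explain both the $\frac{1}{R^2}\int_{|x|<R}|u(x,T)|^2 dx$ term on the left and the bound $2E$ on the right. Integration by parts on the $-\Delta u$ term, using the decomposition $\Delta = \partial_r^2 + \tfrac{2}{r}\partial_r + \tfrac{1}{r^2}(\text{angular part})$, produces the coercive quadratic contributions in $|\nabla u|^2$, $|u_t|^2$ and $|\slashed{\nabla} u|^2$ with the correct weights. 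Pairing the multiplier with $|u|^{p-1}u$ and integrating by parts in $r$ delivers the nonlinear terms; the coefficients $\frac{p-2}{p+1}$ inside the ball and $\frac{p-1}{p+1}$ outside arise from matching the $\partial_r$-part of $\mathbf{L}u$ against $\tfrac{1}{p+1}\partial_r(|u|^{p+1})$ and then adding the contribution of the $u/r$-part of $\mathbf{L}u$.

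The main technical obstacle will be handling the sphere contribution at $|x|=R$. Because $\eta_R(r)=\min(r,R)$ is only Lipschitz, its second radial derivative contains a Dirac mass concentrated at $r=R$, and this distributional contribution is exactly what produces the left-hand side boundary term $\frac{1}{2R^2}\int_0^T \int_{|x|=R}|u|^2\, d\sigma_R dt$. To make this rigorous I would work with a smooth approximation $\eta_R^\varepsilon$, derive the multiplier identity at each $\varepsilon>0$, and then pass to the limit $\varepsilon \to 0^+$ using standard trace estimates for $u(\cdot,t)\in \dot{H}^1(\Rm^3)$. Once the positive bulk and sphere terms are collected on the left and the time-boundary terms are estimated by the energy, the inequality follows in its claimed form.
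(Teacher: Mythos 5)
The paper does not actually prove this theorem: it is quoted from Perthame--Vega \cite{benoit}, with only cosmetic adjustments recorded in the subsequent remarks, so your proposal has to stand on its own as a reconstruction of the multiplier argument. Your overall strategy (a radial multiplier built from $\min(r,R)$, smooth approximation of the kink, a distributional contribution at $|x|=R$ producing the sphere term, and time-boundary terms controlled by the energy) is the right one, but the specific multiplier you wrote down does not yield \eqref{morawetz1}. Writing the multiplier as $\eta(r)u_r+\lambda(r)u$ and integrating each piece by parts, the bulk coefficient of $|u_t|^2$ is $\tfrac12\eta'+\tfrac{\eta}{r}-\lambda$, that of $|u_r|^2$ is $\tfrac12\eta'-\tfrac{\eta}{r}+\lambda$, that of $|\slashed{\nabla}u|^2$ is $-\tfrac12\eta'+\lambda$, that of $|u|^{p+1}$ is $\lambda-\tfrac{1}{p+1}\left(\eta'+\tfrac{2\eta}{r}\right)$, and that of $|u|^2$ is $-\tfrac12\Delta\lambda$. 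Your choice $\eta_R\mathbf{L}u+\tfrac12\eta_R'u$ corresponds (after dividing by $R$) to $\lambda=\tfrac{\eta}{r}+\tfrac12\eta'$, i.e.\ the textbook Morawetz multiplier $\nabla\psi\cdot\nabla u+\tfrac12(\Delta\psi)u$. For that choice the $|u_t|^2$ coefficient vanishes identically, so the term $\tfrac{1}{2R}\int_0^T\!\int_{|x|<R}|u_t|^2$ on the left of \eqref{morawetz1} cannot appear. Worse, $\lambda$ then contains the discontinuous function $\tfrac12\eta_R'$ (the indicator of the ball, up to the factor $1/R$), so $-\tfrac12\Delta\lambda$ is a dipole layer on $|x|=R$ pairing with $\partial_r(|u|^2)$; this is not sign-definite and is not the positive term $\tfrac{1}{2R^2}\int_{|x|=R}|u|^2\,d\sigma_R$.

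The fix is to drop the $\tfrac12\eta_R'u$ correction entirely and multiply by $\frac{\min(|x|,R)}{R}\,\mathbf{L}u$, i.e.\ take $\eta(r)=\min(r,R)/R$ and $\lambda(r)=\eta(r)/r$. Then the coefficients of $|u_t|^2$, $|u_r|^2$ and $|\slashed{\nabla}u|^2$ all equal $\tfrac{1}{2R}$ for $|x|<R$; for $|x|>R$ the first two vanish and the third is $1/|x|$; the nonlinear coefficient is $\tfrac{p-2}{(p+1)R}$ inside and $\tfrac{p-1}{(p+1)|x|}$ outside; and since this $\lambda$ is continuous with a downward kink at $r=R$, one gets $-\tfrac12\Delta\lambda=\tfrac{1}{2R^2}\delta_{\{|x|=R\}}$, which is exactly the sphere term. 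Finally, ``Cauchy--Schwarz and Hardy'' is not quite enough at the endpoint $t=T$: to place $\tfrac{1}{R^2}\int_{|x|<R}|u(T)|^2$ on the left you need the exact identity $\int\eta^2|\mathbf{L}u|^2\,dx=\int\eta^2|u_r|^2\,dx-\tfrac{2}{R^2}\int_{|x|<R}|u|^2\,dx$ (the analogue of Lemma \ref{identity of w u energy} with the weight $\eta^2$), which gives $-\int u_t\,\eta\,\mathbf{L}u\,dx\big|_{t=T}\ge -E+\tfrac{1}{R^2}\int_{|x|<R}|u(T)|^2\,dx$, while the $t=0$ endpoint contributes another $-E$; this is precisely where the bound $2E$ and the coefficient $\tfrac{1}{R^2}$ (the corrected $\tfrac{d^2-1}{8R^2}$ of the paper's remark) come from.
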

\begin{remark}
The notations $p$ and $E$ represent slightly different constants in the original paper \cite{benoit} and this current paper. Here we rewrite the inequality in the setting of the current work. The coefficient of the integral $\int_{B(0,R)} |u(T)|^2 dx$ was $\frac{d^2-1}{4R^2}$ (in the 3-dimensional case $\frac{2}{R^2}$) in the original paper. But the author believes that this is a minor typing mistake. It should have been $\frac{d^2-1}{8R^2}$ instead. 
\end{remark} 
\begin{remark}
 The left hand side of the original inequality given in Perthame and Vega's work does not contain the term $\int_0^T \int_{|x|>R} \frac{|\slashed{\nabla} u|^2}{|x|} dx dt$. Instead this term is simply discarded since it must be nonnegative. In particular this term vanishes if $u$ is a radial solution. But a careful review of the proof given in \cite{benoit} clearly shows that we may put this term in the left hand of the inequality as well. A similar inequality is also proved in Yang \cite{yang1}
 \[
  \int_{-\infty}^\infty \int_{\Rm^3} \frac{|u|^{p+1} + |\slashed{\nabla} u|^2}{|x|} dx dt \leq C E.
 \] 
\end{remark}

\paragraph{Global Integral Estimates} Let us recall that any finite-energy solution to (CP1) is defined for all time $t\in \Rm$. Since none of the coefficients in the Morawetz estimate \eqref{morawetz1} depend on the time $T$. We may substitute the upper limit $T$ of the integrals by $+\infty$, as long as we ignore the last term in the left hand side. We may also substitute the lower limit $0$ of the integrals by $-\infty$, thanks to the energy conservation law. For convenience we combine integral over the same region together and write
\begin{align}
 & \frac{1}{R}\int_{-\infty}^\infty \int_{|x|<R}\left(\frac{1}{2}|\nabla u|^2+\frac{1}{2}|u_t|^2 + \frac{p-2}{p+1}|u|^{p+1}\right) dx dt + \frac{1}{2R^2} \int_{-\infty}^\infty \int_{|x|=R} |u|^2 d\sigma_R(x) dt \nonumber \\
 & \quad + \int_{-\infty}^\infty \int_{|x|>R} \left(\frac{p-1}{p+1} \cdot \frac{|u|^{p+1}}{|x|} + \frac{|\slashed{\nabla} u|^2}{|x|}\right) dx dt \leq 2E. \label{morawetz2}
\end{align}
\noindent This immediately gives the following results. (In order to obtain the third inequality we let $R\rightarrow 0^+$.)
\begin{corollary} \label{cor u morawetz}
 Let $u$ be a solution to (CP1) with a finite energy $E$. Then $u$ satisfies ($R>0$)
 \begin{align*}
   \int_{-\infty}^\infty \int_{|x|<R}\left(|\nabla u|^2+ |u_t|^2 + |u|^{p+1}\right) dx dt & \lesssim_p RE;\\
    \int_{-\infty}^\infty \int_{|x|=R} |u(x,t)|^2 d\sigma_R(x) dt & \leq 2R^2 E; \\
   \int_{-\infty}^{+\infty} \int_{\Rm^3} \frac{|u(x,t)|^{p+1}+|\slashed{\nabla} u(x,t)|^2}{|x|} dx dt & \lesssim_{p} E.
 \end{align*}
\end{corollary}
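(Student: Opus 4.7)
The plan is to read the three inequalities off the global Morawetz bound \eqref{morawetz2} by discarding nonnegative terms and then tracking constants; no new idea beyond \eqref{morawetz2} should be needed. Throughout, the fact that every term on the left-hand side of \eqref{morawetz2} is nonnegative is what allows us to keep only the piece we need.

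For the first inequality, I would simply retain the first integral on the left of \eqref{morawetz2} and drop everything else, getting
\[
 \frac{1}{R}\int_{-\infty}^\infty \!\!\int_{|x|<R}\!\left(\tfrac{1}{2}|\nabla u|^2+\tfrac{1}{2}|u_t|^2 + \tfrac{p-2}{p+1}|u|^{p+1}\right) dx dt \leq 2E,
\]
then multiply by $R$. For $3\leq p<5$ the coefficient $\tfrac{p-2}{p+1}$ is bounded below by a positive constant depending only on $p$, so absorbing the factor $\tfrac{1}{2}$ on the gradient terms and the factor $(p+1)/(p-2)$ on the potential term into $\lesssim_p$ produces the stated estimate. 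For the second inequality, I would similarly retain only the $|x|=R$ boundary term, so that \eqref{morawetz2} gives $\tfrac{1}{2R^2}\int\!\int |u|^2\, d\sigma_R dt \leq 2E$; multiplying by $2R^2$ yields the required bound (up to an absolute constant).

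For the third inequality, keep only the integral over $\{|x|>R\}$ on the left of \eqref{morawetz2}:
\[
 \int_{-\infty}^\infty \!\!\int_{|x|>R} \left(\frac{p-1}{p+1} \cdot \frac{|u|^{p+1}}{|x|} + \frac{|\slashed{\nabla} u|^2}{|x|}\right) dx dt \leq 2E.
\]
The integrand is nonnegative, and as $R \to 0^+$ the indicator of $\{|x|>R\}$ increases monotonically to the indicator of $\Rm^3\setminus\{0\}$, so the monotone convergence theorem lets me take the supremum over $R>0$ (equivalently, the limit $R\to 0^+$) inside the integral. The $p$-dependent constant $\tfrac{p-1}{p+1}$ is bounded below on $[3,5)$, so it can again be absorbed into $\lesssim_p$, producing the desired bound.

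There is no real obstacle here; the only point needing a line of justification is the passage $R\to 0^+$ in the third step, which is legitimate because the integrand is nonnegative and the bound $2E$ is independent of $R$. The first two inequalities are literally terms of \eqref{morawetz2} with the rescaling by $R$ or $R^2$ made explicit.
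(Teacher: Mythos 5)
Your argument is correct and coincides with the paper's, which obtains all three estimates exactly as you do: discard the nonnegative terms of \eqref{morawetz2} that are not needed, rescale by $R$ or $R^2$, and for the third estimate let $R\to 0^+$ (justified, as you note, by monotonicity of the nonnegative integrand and the $R$-independent bound). The only minor point is that the second bound as you derive it literally reads $\int\!\!\int |u|^2\,d\sigma_R\,dt \leq 4R^2E$ rather than $2R^2E$; this factor-of-two slack originates in the paper's own passage from \eqref{morawetz1} on $[0,T]$ to the two-sided-in-time \eqref{morawetz2} and is immaterial to every subsequent use.
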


\section{Energy Flux for Inward and Outward Energies} \label{sec: flux}

In this section we consider the inward and outward energies given in Definition \ref{energies} and give energy flux formula of them. We first give the statement of energy flux formula in the first subsection. The proof is put in the second subsection. 

\subsection{General Energy Flux Formula}

\paragraph{Region involved in this work} Throughout this work, when we apply energy flux formula we always consider a spatially radial symmetric region $\Omega \subset \Rm^3 \times \Rm$, so that it can be expressed by 
\[
 \Omega = \{(r\sin \theta \cos \varphi,r \sin \theta \sin \varphi, r \cos \theta,t)\in \Rm^3 \times \Rm: (r,t) \in \Phi, \theta \in [0,\pi], \varphi \in [0,2\pi]\},
\]
if we use spherical coordinates $(r,\theta,\varphi)$ in $\Rm^3$. Here $\Phi$ is a bounded, closed region in $\Rm_r^+ \times \Rm_t$, whose boundary $\partial \Phi$ is a simple curve consisting of finite number of line segments paralleled to either $t\pm r = 0$ or coordinate axes. Thus the boundary surface $\partial \Omega$ consists of finite pieces of annulus, circular cylinders and cones. When necessary we also allow a line segment of $t$-axis to be part of the boundary $\partial \Phi$. In this case part of boundary surface $\partial \Omega$ is degenerate and shrinks to a line segment. 

\begin{proposition}[General Energy Flux] \label{energy flux formula}
Assume that $3\leq p\leq 5$. Let $u$ be a solution to (CP1) with a finite energy $E$. We define 
 \begin{align*}
  \mathbf{V}_- = & \left[ -\frac{1}{4} \left|\mathbf{L}_+ u\right|^2 + \frac{1}{4}\left|\slashed{\nabla} u\right|^2 + \frac{|u|^{p+1}}{2(p+1)}\right]\frac{\vec{x}}{|x|} + \left[\frac{1}{4}\left|\mathbf{L}_+ u\right|^2 + \frac{1}{4}\left|\slashed{\nabla} u\right|^2 + \frac{|u|^{p+1}}{2(p+1)}\right]\vec{e};\\
   \mathbf{V}_+ = & \left[ +\frac{1}{4} \left|\mathbf{L}_- u\right|^2 - \frac{1}{4}\left|\slashed{\nabla} u\right|^2 - \frac{|u|^{p+1}}{2(p+1)}\right]\frac{\vec{x}}{|x|} + \left[\frac{1}{4}\left|\mathbf{L}_- u\right|^2 + \frac{1}{4}\left|\slashed{\nabla} u\right|^2 + \frac{|u|^{p+1}}{2(p+1)}\right]\vec{e};
 \end{align*} 
 Here $\vec{x}, \vec{e}$ represent vectors $(x_1,x_2,x_3,0), (0,0,0,1) \in \Rm^3 \times \Rm$, respectively. If $\Omega$ is a region in $\Rm^3 \times \Rm$ as described above, then we have
 \[
  \int_{\partial \Omega} \mathbf{V}_{\pm} \cdot d\mathbf{S} = \pm  \iint_\Omega \left(\frac{p-1}{2(p+1)}\frac{|u|^{p+1}}{|x|} + \frac{1}{2} \frac{|\slashed{\nabla} u|^2}{|x|} \right) dx dt.
 \]
 In addition, there exist a nonnegative, finite and continuous\footnote{By continuity we mean $\mu((-\infty,t])$ is a continuous function of $t$.} measure $\mu$ on $\Rm$ with $\mu(\Rm) \lesssim_p E$, which is determined by $u$ and independent to $\Omega$, so that if $\partial \Phi$ contains a line segment $[t_1,t_2]$ of the $t$-axis, then the identity above still holds if we add $\mp \pi \mu([t_1,t_2])$ to the left hand side accordingly.
\end{proposition}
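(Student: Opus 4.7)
My plan is to derive the flux identity as an application of the four-dimensional divergence theorem to $\mathbf V_\pm$ on $\Omega\subset\Rm^3\times\Rm$. I would first treat smooth solutions by a direct computation, then pass to finite-energy solutions by approximation, and finally extract the axis correction $\mu$ via a careful limiting procedure.

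For smooth $u$, using $r\mathbf L_\pm u=(\partial_r\pm\partial_t)(ru)$ and $(\partial_t-\partial_r)(\partial_t+\partial_r)(ru)=-r|u|^{p-1}u+\Delta_{S^2}u/r$, I would compute $\nabla_{(x,t)}\cdot\mathbf V_\pm=\partial_r a_\pm+\tfrac{2a_\pm}{r}+\partial_t b_\pm$, where $a_\pm\tfrac{\vec{x}}{|x|}+b_\pm\vec e$ is the spherical decomposition of $\mathbf V_\pm$. The cubic contributions produced by differentiating $|\mathbf L_\mp u|^2$ cancel pairwise with those from $(\partial_t\pm\partial_r)\tfrac{|u|^{p+1}}{2(p+1)}$ once one uses $u_t+u_r=\mathbf L_+u-u/r$ and $u_t-u_r=-\mathbf L_-u+u/r$. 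Combining the resulting angular-derivative terms via $\slashed\nabla\cdot(f\,\slashed\nabla g)=\slashed\nabla f\cdot\slashed\nabla g+f\slashed\Delta g$ with $\slashed\Delta u=\Delta_{S^2}u/|x|^2$, the upshot is
\[
 \nabla_{(x,t)}\cdot\mathbf V_\pm \;=\; \mp\tfrac{1}{2}\slashed\nabla\!\cdot\!\bigl(\mathbf L_\mp u\,\slashed\nabla u\bigr)\;\pm\;\left(\frac{p-1}{2(p+1)}\frac{|u|^{p+1}}{|x|}+\frac{|\slashed\nabla u|^2}{2|x|}\right).
\]
Since $\Omega$ is radially symmetric, the first term (being the divergence of a tangent vector field on each closed sphere slice) vanishes on integration; the spacetime divergence theorem then produces the claimed identity for smooth $u$. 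To pass to general finite-energy $u$, choose smooth $(u_{0,n},u_{1,n})\to(u_0,u_1)$ in $\dot H^1\times L^2$; Lemma~\ref{lemma local in energy space}, Proposition~\ref{global existence finite energy} and standard perturbation theory yield $u_n\to u$ in $C([a,b];\dot H^1\times L^2)\cap L^{2p/(p-3)}L^{2p}$ on every compact time interval, and every quadratic or $|u|^{p+1}$ integral over $\Omega$ or its piecewise-smooth boundary is continuous in these norms by H\"older, Sobolev embedding and trace theorems, so the identity passes to the limit.

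The delicate case is when $\partial\Phi$ contains a vertical segment $[t_1,t_2]$ of the $t$-axis; there $\mathbf V_\pm$ inherits a $1/|x|^2$ singularity from $\mathbf L_\pm u$. My plan is to excise the thin tube $T_\delta=\{|x|<\delta,\ t\in[t_1,t_2]\}$, apply the already-established identity on $\Omega\setminus T_\delta$, and let $\delta\to 0^+$. The right-hand volume integral converges by monotone convergence (the integrand is nonnegative and lies in $L^1(\Omega)$ by Corollary~\ref{cor u morawetz}). The new cylindrical boundary piece at $\{|x|=\delta\}\times[t_1,t_2]$, with outward normal $-\vec x/|x|$, contributes
\[
 \mp\int_{t_1}^{t_2}\!\int_{|x|=\delta}\!\Bigl[\tfrac14|\mathbf L_\mp u|^2 - \tfrac14|\slashed\nabla u|^2 - \tfrac{|u|^{p+1}}{2(p+1)}\Bigr]d\sigma_\delta\,dt,
\]
and for smooth $u$ the Taylor expansion $u(x,t)=u(0,t)+O(|x|)$ gives $|\mathbf L_\mp u|^2=u(0,t)^2/|x|^2+O(|x|^{-1})$, so the angular-gradient and $|u|^{p+1}$ contributions vanish while the leading term produces $\mp\pi\int_{t_1}^{t_2}u(0,t)^2\,dt$. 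In the smooth case this identifies $d\mu(t)=u(0,t)^2\,dt$ and the correction exactly as $\mp\pi\mu([t_1,t_2])$.

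The main obstacle is constructing such a $\mu$ for a general finite-energy $u$ and verifying its required properties ($\mu(\Rm)\lesssim_p E$, continuity, independence from $\Omega$). My plan is to define $\mu$ as the weak-$\ast$ limit, as $R\to 0^+$, of the finite measures $A_R(t)\,dt$ on $\Rm$, where $A_R(t)=(4\pi R^2)^{-1}\!\int_{|x|=R}|u(x,t)|^2\,d\sigma_R$. Corollary~\ref{cor u morawetz} gives the uniform mass bound $\int_\Rm A_R(t)\,dt\leq E/(2\pi)$; a wave-equation control on $\partial_R A_R$ (a local Morawetz-type identity) upgrades weak compactness to convergence and yields a unique limit $\mu$, for which $\mu(\Rm)\lesssim_p E$ holds by weak lower semicontinuity. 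Continuity of $t\mapsto\mu((-\infty,t])$ (absence of atoms) follows from $u\in L^\infty_t\dot H^1_x$, which prohibits concentration of $|u|^2$ at a single time slice. Independence from $\Omega$ is automatic since $\mu$ is defined intrinsically from $u$; matching the tube contribution to $\pi\mu([t_1,t_2])$ in the rough setting requires an additional density/uniform control argument but is straightforward given the mass bound.
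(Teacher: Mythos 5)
Your treatment of the case away from the $t$-axis is essentially the paper's own argument: the paper computes $\mathrm{div}\,\mathbf{V}_\pm$ in spherical coordinates using the equation for $w=ru$ and disposes of the angular second-derivative terms by integrating by parts in $\theta,\varphi$ over the full spheres, which is exactly your ``tangential divergence integrates to zero on a radially symmetric region'' step in different clothing. That part, including the remark that one smooths the solution first, is fine.

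The genuine gap is in your construction of $\mu$ and the identification of the axis correction. You define $\mu$ as a weak-$\ast$ limit of $A_R(t)\,dt$ with $A_R(t)=(4\pi R^2)^{-1}\int_{|x|=R}|u|^2\,d\sigma_R$, but for a general finite-energy solution you never prove this limit exists (weak-$\ast$ compactness only gives subsequential limits, and ``a local Morawetz-type identity upgrades weak compactness to convergence'' is an assertion, not an argument), and more importantly you never prove that the limit of the lateral cylinder integral $\int_{S(t_1,t_2,\delta)}\mathbf{V}_-\cdot d\mathbf{S}$ equals $\pi$ times that weak-$\ast$ limit. Your Taylor-expansion computation is valid only for smooth $u$, and smooth approximations do not converge in norms strong enough to control the pointwise quantity $|u(0,t)|^2$; in the rough setting one must show that the contributions of $|\slashed{\nabla}u|^2$, $|u|^{p+1}$, $|u_r+u_t|^2$ and the cross term $\tfrac{1}{r}\int_{|x|=r}(u_r+u_t)u\,d\sigma_r$ all vanish along suitable radii, which you do not address. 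The paper sidesteps all of this with one observation you are missing: applying the already-proved flux identity on the annular region $\{r\le|x|\le r_0\}$ forces the cylinder integral at radius $r$ to converge as $r\to0^+$, because every other term in that identity converges (cone fluxes are bounded by $E$, the outer cylinder term is independent of $r$, and the volume term converges by the Morawetz estimate). The limit defines a function $P(t)$ directly; its monotonicity is obtained by picking a sequence $r_n\to0^+$ along which $\int_{S(-\infty,\infty,r_n)}(|u|^{p+1}+|\slashed{\nabla}u|^2)\,dS\to0$ (available from $\iint\frac{|u|^{p+1}+|\slashed{\nabla}u|^2}{|x|}\lesssim_p E$), and its continuity in $t$ — which is what rules out atoms — comes from applying the flux identity on $\{r\le|x|\le1,\ 0\le t'\le t\}$ and noting every other term is continuous in $t$. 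Your substitute argument, that $u\in L^\infty_t\dot H^1_x$ ``prohibits concentration of $|u|^2$ at a single time slice,'' does not establish this: the available bound $A_R(t)\lesssim E/R$ degenerates as $R\to0$ and gives no uniform integrability in $t$. Finally, your tube excision only covers the case where the boundary meets the axis in a flat cap; the proposition also requires the cone-tip configurations (the paper's $c_1,c_2=\pm1$ step), whose treatment relies on the continuity of $\mu$ that you have not secured.
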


\paragraph{Surface integrals} We first have a look at what the surface integrals look like for different types of boundary hyper-surfaces $\Sigma$, as shown in table \ref{surface integrals in energy flux}. Please note that the arrows in the first column indicates the orientation of the surface.
\begin{table}[h]
\caption{Surface integrals in energy flux formula}
\begin{center}
\begin{tabular}{|c|c|c|}\hline
 Boundary type & Inward Energy Case & Outward Energy Case\\
 \hline
 Horizontally $\uparrow$ & $\int_\Sigma \left(\frac{\left|\mathbf{L}_+ u\right|^2}{4} \!+\! \frac{|\slashed{\nabla} u|^2}{4} \!+\! \frac{|u|^{p+1}}{2(p+1)}\right) dS$ & 
 $\int_\Sigma \left(\frac{\left|\mathbf{L}_- u\right|^2}{4} \!+\! \frac{|\slashed{\nabla} u|^2}{4} \!+\! \frac{|u|^{p+1}}{2(p+1)}\right) dS$ \\
 \hline
 Horizontally $\downarrow$ & $-\int_\Sigma \left(\frac{\left|\mathbf{L}_+ u\right|^2}{4} \!+\! \frac{|\slashed{\nabla} u|^2}{4} \!+\! \frac{|u|^{p+1}}{2(p+1)}\right) dS$ & 
 $-\int_\Sigma \left(\frac{\left|\mathbf{L}_- u\right|^2}{4} \!+\! \frac{|\slashed{\nabla} u|^2}{4} \!+\! \frac{|u|^{p+1}}{2(p+1)}\right) dS$\\
 \hline
 $\!|x|\!=\!r_0$, Outward & $\int_\Sigma \left(-\frac{\left|\mathbf{L}_+ u\right|^2}{4} \!+\! \frac{|\slashed{\nabla} u|^2}{4} \!+\! \frac{|u|^{p+1}}{2(p+1)}\right) dS$ & 
 $\int_\Sigma \left(\frac{\left|\mathbf{L}_- u\right|^2}{4} \!-\! \frac{|\slashed{\nabla} u|^2}{4} \!-\! \frac{|u|^{p+1}}{2(p+1)}\right) dS$ \\
 \hline
$\!|x|\!=\!r_0$, Inward & $\int_\Sigma \left(\frac{\left|\mathbf{L}_+ u\right|^2}{4} \!-\! \frac{|\slashed{\nabla} u|^2}{4} \!-\! \frac{|u|^{p+1}}{2(p+1)}\right) dS$ & 
 $\int_\Sigma \left(-\frac{\left|\mathbf{L}_- u\right|^2}{4} \!+\! \frac{|\slashed{\nabla} u|^2}{4} \!+\! \frac{|u|^{p+1}}{2(p+1)}\right) dS$ \\
 \hline
 $|x|=0$ & $-\pi \mu([t_1,t_2])$ & $\pi \mu([t_1,t_2])$ \\
 \hline
 Backward Cone$\uparrow$ & $\frac{1}{\sqrt{2}} \int_\Sigma \left(\frac{|\slashed{\nabla} u|^2}{2} \!+\! \frac{|u|^{p+1}}{p+1}\right) dS$ & $\frac{1}{2\sqrt{2}}\int_\Sigma |\mathbf{L}_- u|^2 dS$\\
 \hline
 Backward Cone$\downarrow$ & $-\frac{1}{\sqrt{2}} \int_\Sigma \left(\frac{|\slashed{\nabla} u|^2}{2} \!+\! \frac{|u|^{p+1}}{p+1}\right) dS$ & $-\frac{1}{2\sqrt{2}}\int_\Sigma |\mathbf{L}_- u|^2 dS$\\
 \hline
 Light Cone $\uparrow$ & $\frac{1}{2\sqrt{2}}\int_\Sigma |\mathbf{L}_+ u|^2 dS$ & $\frac{1}{\sqrt{2}} \int_\Sigma \left(\frac{|\slashed{\nabla} u|^2}{2} \!+\! \frac{|u|^{p+1}}{p+1}\right) dS$\\
 \hline
 Light Cone $\downarrow$ & $-\frac{1}{2\sqrt{2}}\int_\Sigma |\mathbf{L}_+ u|^2 dS$ & $-\frac{1}{\sqrt{2}} \int_\Sigma \left(\frac{|\slashed{\nabla} u|^2}{2} \!+\! \frac{|u|^{p+1}}{p+1}\right) dS$\\
  \hline
\end{tabular}
\end{center}
\label{surface integrals in energy flux}
\end{table}

\begin{figure}[h]
 \centering
 \includegraphics[scale=1.0]{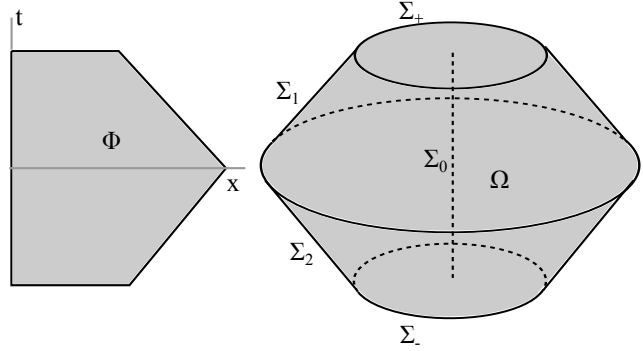}
 \caption{Illustration of the region $\Omega$} \label{figure example}
\end{figure}

\paragraph{Physical Interpretation} Now let us show how to use this energy flux formula. We show this by an example. The region $\Omega$ is defined by 
\[ 
 \Omega = \{(x,t): -1\leq t\leq 1, |x|+t\leq 2, t-|x|\geq -2\}.
\]
The boundary surface consists of five parts, $\Sigma_+$, $\Sigma_1$, $\Sigma_2$, $\Sigma_-$ and $\Sigma_0$, as shown in figure \ref{figure example}. The last one is degenerate. We then write down the energy flux formula for inward energy. 
\begin{align*}
 \int_{\Sigma_+} \mathbf{V}_-\cdot d\mathbf{S} + \int_{\Sigma_1} \mathbf{V}_-\cdot d\mathbf{S} & + \int_{\Sigma_2} \mathbf{V}_-\cdot d\mathbf{S}
 +\int_{\Sigma_-} \mathbf{V}_-\cdot d\mathbf{S} + \pi \mu([-1,1])\\
 &  = -\iint_\Omega \left(\frac{p-1}{2(p+1)}\frac{|u|^{p+1}}{|x|} + \frac{1}{2} \frac{|\slashed{\nabla} u|^2}{|x|} \right) dx dt.
\end{align*}
We move the surface integrals over $\Sigma_1$, $\Sigma_2$, the term $\pi \mu([-1,1])$ to the right hand side and then calculate the integrals in details.
\begin{align*}
 E_-(1;0,1) - E(-1;0,1) = - \frac{1}{\sqrt{2}} & \int_{\Sigma_1} \left(\frac{|\slashed{\nabla} u|^2}{2} \!+\! \frac{|u|^{p+1}}{p+1}\right) dS + \frac{1}{2\sqrt{2}}\int_{\Sigma_2} |\mathbf{L}_+ u|^2 dS \\
 & -\pi \mu([-1,1]) - \iint_\Omega \left(\frac{p-1}{2(p+1)}\frac{|u|^{p+1}}{|x|} + \frac{1}{2} \frac{|\slashed{\nabla} u|^2}{|x|} \right) dx dt.
\end{align*}
The left hand side is the difference of inward energy in the region $\Sigma_+$ and inward energy in the region $\Sigma_-$. This change is a sum of four terms, as shown in the right hand side.  The first term is the surface integral over $\Sigma_1$. This is a loss term. The integral of $|u|^{p+1}/(p+1)$ represents the amount of energy loss on $\Sigma_1$ due to non-linear effect. While the integral of $|\slashed{\nabla} u|^2/2$ represents the amount of energy loss on $\Sigma_1$ by linear propagation. The second term is a gain. It represents the amount of energy moving into the region $\Omega$ through the surface $\Sigma_2$ by linear propagation. The third term is again a loss. It represents the amount energy carried by inward waves which go through the origin thus change to outward waves during the time period $[-1,1]$. The last term is a double integral in space-time region $\Omega$. This is the amount of inward energy transformed to outward energy by both linear ($|\slashed{\nabla} u|^2$) or nonlinear ($|u|^{p+1}$) wave propagation. 

\begin{remark}
 The gain or loss represented by surface integral looks as if it happens on the boundary surface. However, they actually contain contributions from both boundary effect and wave propagation. For example, if we consider the change rate of inward energy contained inside the back forward light cone $\Sigma_1 = \{(x,t): |x|+t=2, 0\leq t\leq 1\}$ with respect to $t$, we may calculate the derivative
 \begin{align*}
  & \frac{d}{dt} \int_{|x|<2-t} \left(\frac{1}{4}\left|\mathbf{L}_+ u\right|^2 + \frac{1}{4}\left|\slashed{\nabla} u\right|^2 + \frac{|u|^{p+1}}{2(p+1)}\right) dx = I_1+I_2.
 \end{align*}
 Here we have
 \begin{align*}
 I_1 = & - \int_{|x|=2-t} \left(\frac{1}{4}\left|\mathbf{L}_+ u\right|^2 + \frac{1}{4}\left|\slashed{\nabla} u\right|^2 + \frac{|u|^{p+1}}{2(p+1)}\right) d\sigma_{2-t} (x);\\
 I_2 =  & \int_{|x|<2-t} \frac{d}{dt} \left(\frac{1}{4}\left|\mathbf{L}_+ u\right|^2 + \frac{1}{4}\left|\slashed{\nabla} u\right|^2 + \frac{|u|^{p+1}}{2(p+1)}\right) dx.
 \end{align*}
 The term $I_1$ is the contribution by the change of integral region as $t$ increases. While $I_2$ is the contribution by nonlinear wave propagation inside the cone. If we integrate $I_1$ to obtain the contribution of the boundary effect on the surface $\Sigma_1$, we have
 \begin{align*}
  \int_{0}^1 I_1(t) dt =  - \frac{1}{\sqrt{2}} \int_{\Sigma_1} \left(\frac{1}{4}\left|\mathbf{L}_+ u\right|^2 + \frac{1}{4}\left|\slashed{\nabla} u\right|^2 + \frac{|u|^{p+1}}{2(p+1)}\right) dS.
 \end{align*}
 This is a half of the energy flux in our formula, plus an extra term regarding $|\slashed{\nabla} u|^2$. The other half will come form the integral of $I_2$, i.e. the contribution of nonlinear wave propagation. The integral of $I_2$ will also come with a term regarding $|\slashed{\nabla} u|^2$ thus kill the extra term mentioned above. 
\end{remark}

\paragraph{Notation of Energy Fluxes on Cones} For convenience we define
\begin{definition}[Notations of Light Cones] 
Given $s,\tau \in \Rm$, we use the following notations for backward($-$) and forward($+$) light cones  
\begin{align*}
 &C^-(s) = \{(x,t): |x|+t = s\};& &C^+(\tau) = \{(x,t): t-|x|=\tau\}.&
\end{align*}
We also need to consider their truncated version, i.e. the cones between two given times $t_1, t_2$. 
\begin{align*}
 &C^-(s;t_1,t_2) = \{(x,t): |x|+t = s, t_1\leq t\leq t_2\};&\\
  &C^+(\tau; t_1,t_2) = \{(x,t): t-|x|=\tau, t_1\leq t\leq t_2\}.&
\end{align*}
\end{definition}
\begin{definition}[Notations of Energy fluxes]
 Given $s,\tau \in \Rm$, we define energy fluxes through light cones
\begin{align*} 
 Q_-^- (s) & = \frac{1}{\sqrt{2}} \int_{C^-(s)} \left(\frac{1}{p+1}|u|^{p+1} +  \frac{1}{2}\left|\slashed{\nabla} u\right|^2 \right) dS;\\
 Q_+^-(s) & = \frac{1}{2\sqrt{2}} \int_{C^-(s)} |\mathbf{L}_- u|^2 dS;\\
 Q_-^+(\tau) & = \frac{1}{2\sqrt{2}} \int_{C^+(\tau)} |\mathbf{L}_+ u|^2 dS;\\
 Q_+^+(\tau) & = \frac{1}{\sqrt{2}} \int_{C^+(\tau)} \left(\frac{1}{p+1}|u|^{p+1} +  \frac{1}{2}\left|\slashed{\nabla} u\right|^2 \right) dS.
\end{align*}
The upper index tells us whether this is energy flux through backward light cone($-$), or forward light cone($+$). The lower index indicates whether this is energy flux of inward energy ($-$) or outward energy ($+$). We can also consider their truncated version, which is the energy flux through the given light cone during the given time period.  
\begin{align*} 
 Q_-^- (s; t_1,t_2) & =  \frac{1}{\sqrt{2}} \int_{C^-(s;t_1,t_2)} \left(\frac{1}{p+1}|u|^{p+1} +  \frac{1}{2}\left|\slashed{\nabla} u\right|^2 \right) dS,& &t_1<t_2\leq s;\\
 Q_+^-(s; t_1,t_2) & = \frac{1}{2\sqrt{2}} \int_{C^-(s;t_1,t_2)} |\mathbf{L}_- u|^2 dS, & & t_1<t_2\leq s;\\
 Q_-^+(\tau; t_1,t_2) & = \frac{1}{2\sqrt{2}} \int_{C^+(\tau;t_1,t_2)} |\mathbf{L}_+ u|^2 dS,& &\tau\leq t_1<t_2;\\
 Q_+^+(\tau; t_1,t_2) & = \frac{1}{\sqrt{2}} \int_{C^+(\tau;t_1,t_2)} \left(\frac{1}{p+1}|u|^{p+1} +  \frac{1}{2}\left|\slashed{\nabla} u\right|^2 \right) dS,& &\tau\leq t_1<t_2.
\end{align*}
\end{definition}

\begin{remark} \label{bound of Q}
 The sums $Q_-^-(s)+Q_+^-(s)$ and $Q_-^+(\tau)+Q_+^+(\tau)$ are exactly fluxes of full energy across the light cones $C^-(s)$ and $C^+(\tau)$, respectively. In face, we have
 \begin{align*}
  Q_-^+(\tau)+Q_+^+(\tau) & = \frac{1}{\sqrt{2}} \int_{C^+(\tau)} \left(\frac{1}{p+1}|u|^{p+1} +  \frac{1}{2}\left|\slashed{\nabla} u\right|^2 + \frac{1}{2}\left|\mathbf{L}_+ u\right|^2\right) dS \\
  & = \int_{\Rm^3} \left(\frac{1}{p+1}|\tilde{u}|^{p+1} +  \frac{1}{2}\left|\slashed{\nabla} \tilde{u}\right|^2+\frac{1}{2}\left|\frac{1}{r}\partial_r (r\tilde{u})\right|^2 \right) dx \\
  & =  \int_{\Rm^3} \left(\frac{1}{p+1}|\tilde{u}|^{p+1} +  \frac{1}{2}\left|\slashed{\nabla} \tilde{u}\right|^2+\frac{1}{2}\left|\partial_r \tilde{u}\right|^2 \right) dx \\
  & = \frac{1}{\sqrt{2}} \int_{C^+(\tau)} \left(\frac{1}{p+1}|u|^{p+1} +  \frac{1}{2}\left|\slashed{\nabla} u\right|^2 + \frac{1}{2}\left|(\partial_r + \partial_t)u\right|^2\right) dS \leq E.
 \end{align*}
Here we have $\tilde{u}(x) = u(x,|x|+\tau) \in \dot{H}^1 \cap L^{p+1} (\Rm^3)$ by the well-know energy flux formula of full energy. We also need to appy Lemma \ref{identity of w u energy} in the calculation. The situation of $Q_-^-(s) + Q_+^-(s)$ is similar. As a result all the energy fluxes defined above are dominated by the full energy $E$.
\end{remark}

\paragraph{Notation of double integral} For convenience we also use the following notation for the double integral in the energy flux formula
\begin{definition}[Morawetz integral]
 Given any region $\Omega$ in $\Rm^3 \times \Rm$, we define the Morawetz integral 
 \[
  \mathcal{M}(\Omega) = \iint_\Omega \left(\frac{p-1}{2(p+1)}\cdot\frac{|u(x,t)|^{p+1}}{|x|} + \frac{1}{2} \cdot\frac{|\slashed{\nabla} u(x,t)|^2}{|x|} \right) dx dt.
 \]
\end{definition}

\subsection{Proof of Energy Flux Formula}

Now let us give a proof of Proposition \ref{energy flux formula}.  Without loss of generality let us prove the energy flux formula of inward energy. Then we may obtain the energy flux formula of outward energy by either of the following two ways
\begin{itemize}
 \item We may follow the same argument as in the inward energy case.
 \item We may consider the difference of the energy flux formula for full energy and inward energy. Please note that in general the sum of energy fluxes of inward and outward energies through a hypersurface is NOT the energy flux of the full energy through the same hypersurface. Similarly in general the sum of inward and outward energies in an annulus $\Sigma = \{x\in \Rm: R_1<|x|<R_2\}$ is NOT the energy contained in $\Sigma$, as shown in Remark \ref{E plus add minus}. In fact error terms as below may appear 
 \[
 \pm \frac{1}{2R}\int_{|x|=R} |u(x,t)|^2 d\sigma_R (x), 
 \]
 as we found in the proof of Lemma \ref{identity of w u energy}. We have to keep track of them and show that all these terms are finally canceled out.
\end{itemize}

\paragraph{The case away from $t$-axis}  If the region $\Phi$ is away from the $t$-axis, then we may apply Guass' formula on the surface integral. For convenience we define $w(x,t) = |x| u(x,t)$. We will also use spherical coordinates in order to simplify the calculation and take advantage of the spatial symmetric assumption on $\Omega$. Our argument below involves second derivatives. We may apply smooth approximation techniques if the solution is not sufficiently smooth. We start by recalling
\begin{equation} \label{expression of slashed}
 |\nabla u|^2 = |u_r|^2 + \frac{|u_\theta|^2}{r^2} + \frac{|u_\varphi|^2}{r^2 \sin^2 \theta}\quad \Rightarrow \quad |\slashed{\nabla} u|^2 = 
 \frac{1}{r^2}\left(|u_\theta|^2+\frac{|u_\varphi|^2}{\sin^2 \theta}\right).
\end{equation}
We may plug this in $\mathbf{V}_-$ and calculate its divergence
\begin{align*}
 4\;\hbox{div}\mathbf{V}_- = & \left(\partial_r + \frac{2}{r}\right)\left[\frac{1}{r^2}\left(\frac{2}{p+1}\cdot\frac{|w|^{p+1}}{r^{p-1}} - |w_r+w_t|^2 + |u_\theta|^2 + \frac{|u_\varphi|^2}{\sin^2 \theta}\right)\right]\\
 & \quad + \partial_t \left[\frac{1}{r^2}\left(\frac{2}{p+1}\cdot\frac{|w|^{p+1}}{r^{p-1}} + |w_r+w_t|^2 + |u_\theta|^2 + \frac{|u_\varphi|^2}{\sin^2 \theta}\right)\right]
\end{align*}
We observe the fact $\left(\partial_r + 2/r\right)(X/r^2) = (1/r^2)\partial_r X$ and obtain
\begin{align*}
 4\; \hbox{div} \mathbf{V}_- & = \frac{2}{r^2}(w_r+w_t)(w_{tt}-w_{rr}) + \frac{1}{r^2}(\partial_r+\partial_t)\left[\frac{2}{p+1}\cdot\frac{|w|^{p+1}}{r^{p-1}} + |u_\theta|^2 + \frac{|u_\varphi|^2}{\sin^2 \theta}\right]\\
 & = \frac{2}{r^2}(w_r+w_t)\left(w_{tt}-w_{rr}+\frac{|w|^{p-1}w}{r^{p-1}}\right) -\frac{2(p-1)}{p+1}\cdot\frac{|w|^{p+1}}{r^{p+2}} \\
 & \qquad + \frac{1}{r^2}(\partial_r+\partial_t)\left[|u_\theta|^2 + \frac{|u_\varphi|^2}{\sin^2 \theta}\right].
\end{align*}
We recall the expression of Laplace operator in spherical coordinates
\begin{align*}
 \Delta u =  u_{rr} + \frac{2}{r} u_r + \frac{1}{r^2\sin \theta} \partial_\theta (u_\theta \sin \theta) + \frac{u_{\varphi \varphi}}{r^2 \sin^2 \theta}.
\end{align*}
This enables us to write down the equation $w$ satisfies
\begin{align*}
 w_{tt}  =  ru_{tt} & = - r|u|^{p-1} u + r\Delta u\\
 & =  -\frac{|w|^{p-1}w}{r^{p-1}} + w_{rr} + \frac{1}{r^2\sin \theta} \partial_\theta (w_\theta \sin \theta) + \frac{w_{\varphi \varphi}}{r^2 \sin^2 \theta}.
\end{align*}
Plugging this in the expression of $\hbox{div} \mathbf{V}_-$ we have
\begin{align*}
  4\; \hbox{div} \mathbf{V}_- & = \frac{2}{r^2}(w_r+w_t)\left(\frac{1}{r^2\sin \theta} \partial_\theta (w_\theta \sin \theta) + \frac{w_{\varphi \varphi}}{r^2 \sin^2 \theta}\right) -\frac{2(p-1)}{p+1}\frac{|u|^{p+1}}{|x|} \\
 & \qquad +\frac{1}{r^2} (\partial_r+\partial_t)\left(|u_\theta|^2 + \frac{|u_\varphi|^2}{\sin^2 \theta}\right)\\
 & = J_1(r,\theta,\varphi,t) + J_2(x,t) + J_3(r,\theta,\varphi,t).
\end{align*}
Now we apply Guess' formula on the surface integral in the left hand of energy flux formula and obtain
\begin{align*}
 4\int_{\partial \Omega} \mathbf{V}_- \cdot d\mathbf{S} = 4\iint_\Omega \hbox{div} \mathbf{V}_- dx dt = I_1 + I_2 + I_3
\end{align*}
The integrals $I_1$, $I_2$ and $I_3$ are defined by 
\[
 I_k = \iint_{\Omega} J_k dxdt = \iint_\Phi \int_0^\pi \int_0^{2\pi} J_k \cdot r^2 \sin \theta\, d\varphi d\theta dr dt, \qquad k=1,2,3.
\]
In particular we have
\begin{align*}
I_2  = - \frac{2(p-1)}{p+1}\iint_{\Omega} \frac{|u|^{p+1}}{|x|} dxdt.
\end{align*}
Now we deal with the most difficult term $I_1$.
\begin{align*}
I_1 & = \iint_\Phi \int_0^\pi \int_0^{2\pi} J_1(r,\theta,\varphi,t) r^2 \sin \theta\; d\varphi d\theta dr dt\\
& = \iint_\Phi \int_0^\pi \int_{0}^{2\pi} 2(w_r+w_t)\left(\frac{1}{r^2} \partial_\theta (w_\theta \sin \theta) + \frac{w_{\varphi \varphi}}{r^2 \sin \theta}\right)d\varphi d\theta dr dt\\
& = -\iint_\Phi \int_0^\pi \int_{0}^{2\pi} \frac{2}{r^2}\left[(w_{\theta r}+w_{\theta t})(w_\theta \sin \theta) + (w_{\varphi r}+w_{\varphi t})\cdot \frac{w_{\varphi}}{\sin \theta}\right]d\varphi d\theta dr dt\\
& = -\iint_\Phi \int_0^\pi \int_{0}^{2\pi} \frac{1}{r^2} \left[(\partial_r + \partial_t)\left(|w_\theta|^2 + \frac{|w_{\varphi}|^2}{\sin^2 \theta}\right)\right] \sin \theta\; d\varphi d\theta dr dt\\
& = -\iint_\Phi \int_0^\pi \int_{0}^{2\pi} \frac{1}{r^2} \left\{(\partial_r + \partial_t)\left[r^2\left(|u_\theta|^2 + \frac{|u_{\varphi}|^2}{\sin^2 \theta}\right)\right]\right\} \sin \theta\; d\varphi d\theta dr dt\\
& = -\iint_\Phi \int_0^\pi \int_{0}^{2\pi} \left[\frac{2}{r} \left(|u_\theta|^2 + \frac{|u_{\varphi}|^2}{\sin^2 \theta}\right) +  r^2 J_3(r,\theta,\varphi,t) \right] \sin \theta\; d\varphi d\theta dr dt\\
& = - \iint_\Omega \frac{2|\slashed{\nabla} u|^2}{|x|} dx dt - I_3 
\end{align*}
In the final step we use the expression of $|\slashed{\nabla} u|^2$ in spherical coordinates \eqref{expression of slashed}. Collecting all the terms $I_1$, $I_2$ and $I_3$ we have
\[
 4\int_{\partial \Omega} \mathbf{V}_- \cdot d\mathbf{S} = - \frac{2(p-1)}{p+1}\iint_{\Omega} \frac{|u|^{p+1}}{|x|} dxdt- \iint_\Omega \frac{2|\slashed{\nabla} u|^2}{|x|} dx dt.
\]
This proves the energy flux formula when $\Phi$ is away from the $t$-axis.

\paragraph{The case with boundary on $t$-axis} Now let us consider the case when part of the boundary is on the $t$-axis. In this case a limit process $r \rightarrow 0^+$ is required. More precisely we first apply energy flux formula away from $t$-axis on the region $\Omega_r = \Omega \cap \{(x,t)\in \Rm^3 \times \Rm: |x|\geq r\}$ and then make $r\rightarrow 0^+$. In order to complete the proof we only need to show that there exists a positive, nonnegative, continuous measure so that the following identity holds for all $t_1<t_2$ and $c_1,c_2\in \{-1,0,1\}$.
\begin{equation*} 
  \lim_{r\rightarrow 0^+} \int_{S(t_1+c_1r,t_2+c_2r,r)} \mathbf{V}_- \cdot d\mathbf{S}  = +\pi \mu([t_1,t_2]).
\end{equation*}
Here $S(t_1,t_2,r)=\{(x,t): |x|=r, t_1<t<t_2\}$ is a circular cylinder oriented inward. Please see figure \ref{figure limit} for an illustration of this limit process in three different cases with $c_2=-1,0,1$, respectively. The geometric meaning of $c_1$ is similar. We may plug in the definition of $\mathbf{V}_-$ and write the surface integral above in details.
\begin{equation} \label{limit of vertical integral}
 \lim_{r\rightarrow 0^+} \int_{S(t_1+c_1r,t_2+c_2r,r)} \left(\frac{1}{4} \left|\mathbf{L}_+ u\right|^2 - \frac{1}{4}\left|\slashed{\nabla} u\right|^2 - \frac{|u|^{p+1}}{2(p+1)}\right) dS = \pi \mu([t_1,t_2]).
\end{equation}

\begin{figure}[h]
 \centering
 \includegraphics[scale=1.2]{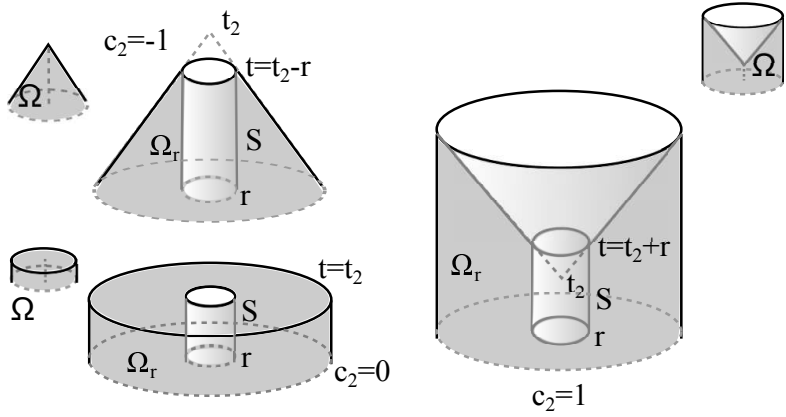}
 \caption{Illustration of the limit process $r \rightarrow 0^+$} \label{figure limit}
\end{figure}

\noindent We first define 
\[
 P(t) = \lim_{r\rightarrow 0^+} \int_{S(0,t,r)} \left(\frac{1}{4} \left|\mathbf{L}_+ u\right|^2 - \frac{1}{4}\left|\slashed{\nabla} u\right|^2 - \frac{|u|^{p+1}}{2(p+1)}\right)  dS.
\]
If $t<0$, we integrate on the surface $S(t,0,r)$ but multiply the integral by $-1$. The proof consists of four steps. 
\begin{itemize}
 \item[(1)] The limit in the left hand side of \eqref{limit of vertical integral} always converges. Thus the function $P(t)$ is well defined. It is clear we then have 
 \[
  \lim_{r\rightarrow 0^+} \int_{S(t_1,t_2,r)} \left( \frac{1}{4} \left|\mathbf{L}_+ u\right|^2 - \frac{1}{4}\left|\slashed{\nabla} u\right|^2 - \frac{|u|^{p+1}}{2(p+1)}\right) dS = P(t_2) - P(t_1). 
\] 
 \item[(2)] The function $P(t)$ is continuous and nondecreasing. In addition\footnote{Since $P(t)$ is nondecreasing, $P(\pm \infty)$ represents its limits at infinity} $P(+\infty)-P(-\infty) \lesssim_p E$.
 \item[(3)] The function $P(t)$ defines a nonnegative, continuous, finite measure by $\pi \mu([t_1,t_2]) = P(t_2)-P(t_1)$. Thus identity \eqref{limit of vertical integral} holds for $c_1=c_2=0$.
 \item[(4)] We prove that identity \eqref{limit of vertical integral} also holds for other choices of $c_1,c_2$. 
\end{itemize}
\paragraph{Step 1} We consider the region $\Omega_r = \{(x,t): r\leq |x|\leq r_0, t_1+c_1|x|\leq t\leq t_2+c_2|x|\}$ with $0<r<r_0$, which is away from $t$-axis. Thus we may apply energy flux formula on this region. 
\begin{align*}
 & \int_{S(t_1+c_1r,t_2+c_2r,r)} \left( \frac{1}{4} \left|\mathbf{L}_+ u\right|^2 - \frac{1}{4}\left|\slashed{\nabla} u\right|^2 - \frac{|u|^{p+1}}{2(p+1)}\right) dS + \int_{\Sigma_1(r)} \mathbf{V}_-\cdot d\mathbf{S}\\
 & \quad + \int_{\Sigma_2(r)} \mathbf{V}_- \cdot d\mathbf{S} + \int_{\Sigma_3} \mathbf{V}_-\cdot d\mathbf{S} = - \iint_{\Omega_r} \left(\frac{p-1}{2(p+1)}\frac{|u|^{p+1}}{|x|} + \frac{1}{2} \frac{|\slashed{\nabla} u|^2}{|x|} \right) dx dt.
\end{align*}
Here $\Sigma_j(r) = \{(x,t): t=t_j+c_j|x|, r\leq |x|\leq r_0\}, \, j=1,2$ and $\Sigma_3 = \{(x,t): |x|=r_0, t_1+c_1r_0\leq t \leq t_2+c_2r_0\}$.  The first term is the integral we are interested in. All the other terms in the identity above converge as $r\rightarrow 0^+$. Because
\begin{itemize}
 \item The integral on $\Sigma_j, \;j=1,2$ converges as $r\rightarrow 0^+$ by either Remark \ref{bound of Q}, if $c_j=\pm 1$, or finiteness of energy, if $c_j=0$. 
 \item The integral on $\Sigma_3$ is independent to $r$.
 \item The double integral in the right hand side converges by Morawetz estimates.
\end{itemize}
As a result the first term above converges as $r \rightarrow 0^+$.
\paragraph{Step 2} We first show that $P(t)$ is continuous. Without loss of generality, let us assume $t>0$. If we choose the region $\Omega(r,t) = \{(x,t'): r\leq |x|\leq 1, 0\leq t'\leq t\}$, apply energy flux formula and let $r\rightarrow 0^+$, we obtain
\begin{align*}
 P(t) + E_-(t;0,1) - E_-(0;0,1) + & \int_{S(0,t,1)} \left(-\frac{1}{4} \left|\mathbf{L}_+ u\right|^2 + \frac{1}{4}\left|\slashed{\nabla} u\right|^2 + \frac{|u|^{p+1}}{2(p+1)}\right) dS\\
 & =   -\iint_{\Omega(t)} \left(\frac{p-1}{2(p+1)}\frac{|u|^{p+1}}{|x|} + \frac{1}{2} \frac{|\slashed{\nabla} u|^2}{|x|} \right) dx dt.
\end{align*}
Here $\Omega(t) = \{(x,t'): |x|\leq 1, 0\leq t'\leq t\}$. Because all other terms are continuous in $t$, we obtain the continuity of $P(t)$. In order to show the monotonicity, we only need to show
\[
 P(t_2) - P(t_1) = \lim_{r\rightarrow 0^+} \int_{S(t_1,t_2,r)} \left(\frac{1}{4} \left|\mathbf{L}_+ u\right|^2 - \frac{1}{4}\left|\slashed{\nabla} u\right|^2 - \frac{|u|^{p+1}}{2(p+1)}\right)  dS \geq 0
\]
for all $t_2 > t_1$. We first observe 
\[
 \int_{0}^\infty\left[\frac{1}{r} \int_{S(-\infty,\infty,r)} \left(|u|^{p+1} + \left|\slashed{\nabla} u\right|^2 \right)dS\right] dr = \iint_{\Rm^3\times \Rm} 
 \frac{|u|^{p+1}+\left|\slashed{\nabla} u\right|^2}{|x|} dx dt \lesssim_p E.
\]
Thus there exists a sequence $r_n \rightarrow 0^+$ so that 
\[
 \lim_{n\rightarrow \infty} \int_{S(-\infty,\infty,r_n)} \left(|u|^{p+1} + \left|\slashed{\nabla} u\right|^2 \right)dS = 0.
\]
As a result, we have the monotonicity 
\[
 P(t_2) - P(t_1) = \lim_{n\rightarrow \infty} \frac{1}{4}\int_{S(t_1,t_2,r_n)} \left|\mathbf{L}_+ u\right|^2 dS \geq 0.
\]
Similarly we observe 
\begin{align*}
 & \int_{0}^R \int_{S(-\infty,\infty,r)} \left(|u|^{p+1} + \left|\slashed{\nabla} u\right|^2 + \left|\mathbf{L}_+ u\right|^2 \right)dS dr\\
 \lesssim_1 & \int_{-\infty}^\infty \int_{|x|<R} \left(|u|^{p+1} + \left|\slashed{\nabla} u\right|^2 + \left|\mathbf{L} u \right|^2 + |u_t|^2\right) dx dt\\
 \lesssim_1 & \int_{-\infty}^\infty \int_{|x|<R} \left(|u|^{p+1}  + |\nabla u|^2 + |u_t|^2\right) dx dt + \frac{1}{R}\int_{-\infty}^\infty \int_{|x|=R} |u|^2 d\sigma_R(x) dt\\
 \lesssim_p & RE.
\end{align*}
Here we use Remark \ref{relationship of u w energy} and Corollary \ref{cor u morawetz}. The inequality above immediately gives
\[
 \liminf_{r \rightarrow 0^+}\int_{S(-\infty,\infty,r)} \left(|u|^{p+1} + \left|\slashed{\nabla} u\right|^2 + \left|\mathbf{L}_+ u\right|^2 \right)dS \lesssim_p E.
\]
This implies $P(+\infty)-P(-\infty) \lesssim_p E$. 
\paragraph{Step 3} Now we collect all information about $P(t)$ obtained in step 2. According to measure theory, there exists a nonnegative, continuous, finite Borel measure $\mu$, so that $\mu([t_1,t_2]) = P(t_2) - P(t_1)$. Readers may refer to Tao \cite{taomeasure} for related measure theory. 
\paragraph{Step 4} Let us recall the sequence $r_n$ introduced in Step 2. The integral of $|\slashed{\nabla} u|^2 + |u|^{p+1}$ over cylinder $\{(x,t): |x|=r_n\}$ can be ignored if we consider the limit as $n\rightarrow +\infty$. Thus for any constant $\eps>0$ we have
\begin{align*}
 & \lim_{r\rightarrow 0^+} \int_{S(t_1+c_1r,t_2+c_2r,r)} \left(\frac{1}{4} \left|\mathbf{L}_+ u\right|^2 - \frac{1}{4}\left|\slashed{\nabla} u\right|^2 - \frac{|u|^{p+1}}{2(p+1)}\right)  dS\\
 = & \lim_{n\rightarrow \infty} \frac{1}{4}\int_{S(t_1+c_1r_n,t_2+c_2r_n,r_n)} \left|\mathbf{L}_+ u\right|^2 dS\\
 \leq & \lim_{n\rightarrow \infty} \frac{1}{4} \int_{S(t_1-\eps,t_2+\eps,r_n)} \left|\mathbf{L}_+ u\right|^2  dS\\
 = & \lim_{n \rightarrow \infty} \int_{S(t_1-\eps+0\cdot r_n,t_2+\eps+0\cdot r_n,r_n)} \left(\frac{1}{4} \left|\mathbf{L}_+ u\right|^2 - \frac{1}{4}\left|\slashed{\nabla} u\right|^2 - \frac{|u|^{p+1}}{2(p+1)}\right)  dS\\
 = & \pi \mu([t_1-\eps,t_2+\eps]). 
\end{align*}
In the same way we can find a lower bound $\pi\mu([t_1+\eps, t_2-\eps])$ for the limit above. Finally we make $\eps\rightarrow 0^+$ and obtain the desired  identity \eqref{limit of vertical integral} by the continuity of the measure $\mu$.

\begin{remark}
If we follow a similar limit process for the energy flux of outward energy, we will obtain the same measure $\mu$ as in the case of inward energy. There are two different ways to show this. 
 \begin{itemize}
 \item We may apply smooth approximation techniques and use the following fact: If the solution $u(x,t)$ is sufficiently smooth near the origin, then we obtain $d \mu(t) = |u(0,t)|^2 dt$ in both limit processes by the following calculation
 \begin{equation*}
  \lim_{r\rightarrow 0^+} \int_{|x|=r} \left|\nabla u\cdot \frac{x}{|x|} + \frac{u}{|x|} \pm u_t\right|^2 d\sigma_r (x)  = 4\pi |u(0,t)|^2
 \end{equation*}
 \item Let us temporarily use notations $\mu_-$ (inward case) and $\mu_+$ (outward case) for measures we obtained in the limit process above. Following the same argument as in Section \ref{sec:distribution}, we have
 \begin{align*}
  E_-(t_2) - E_-(t_1) & = -\pi \mu_-([t_1,t_2])  - \mathcal{M}(\Rm^3 \times [t_1,t_2]);\\
  E_+(t_2)-E_+(t_1) & = +\pi \mu_+([t_1,t_2]) + \mathcal{M}(\Rm^3 \times [t_1,t_2]).
 \end{align*}
 A combination of these identities with the fact $E_+(t) + E_-(t) = E$ shows $\mu_-([t_1,t_2]) = \mu_+([t_1,t_2])$ for all $t_1<t_2$.
 \end{itemize}
\end{remark}

\subsection{Energy Flux Formula for Cones}
We can apply our general energy flux formula on a cone region. This will be frequently used in Section \ref{sec:distribution}. 
\begin{proposition}[Cone Law] \label{cone law}
Given any $s_0>t_0$, we can define $\Omega = \{(x,t):t\geq t_0, |x|+t\leq s_0\}$ to be a cone region and obtain the following identity from energy flux formula
\[
 E_-(t_0;0,s_0-t_0) = \pi \mu([t_0,s_0]) + Q_-^-(s_0;t_0,s_0) + \mathcal{M}(\Omega).
\]
 We can substitute $s_0$ by $t_0+r_0$ with $r_0>0$ and rewrite this in the form
\[
 E_-(t_0;0,r_0) = \pi \mu([t_0,t_0+r_0]) + Q_-^-(t_0+r_0;t_0,t_0+r_0) + \mathcal{M}(\Omega).
\]
\end{proposition}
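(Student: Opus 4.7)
The plan is to apply the general energy flux formula for $\mathbf{V}_-$ (Proposition \ref{energy flux formula}) directly to the cone region $\Omega$ and read off each boundary term from Table \ref{surface integrals in energy flux}. First I would check that $\Omega$ is admissible in the sense of Section \ref{sec: flux}: its projection $\Phi$ onto the $(r,t)$-plane is the triangle with vertices $(0,t_0)$, $(s_0-t_0,t_0)$, $(0,s_0)$, whose boundary $\partial\Phi$ consists of three line segments — one parallel to the $r$-axis, one along $t+r=s_0$, and one along the $t$-axis — so the required structural hypothesis is satisfied, with the $t$-axis segment $[t_0,s_0]$ contributing a degenerate piece that triggers the measure correction.

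Next I would decompose $\partial\Omega$ into its three non-degenerate / degenerate pieces and use the table entries:
\begin{itemize}
\item The bottom $\Sigma_0 = \{(x,t_0) : |x|\leq s_0-t_0\}$ carries an outward normal in the $-t$ direction, hence is of type ``Horizontally $\downarrow$'', contributing
\[
\int_{\Sigma_0}\mathbf{V}_-\cdot d\mathbf{S} = -\int_{\Sigma_0}\!\left(\frac{|\mathbf{L}_+ u|^2}{4}+\frac{|\slashed{\nabla}u|^2}{4}+\frac{|u|^{p+1}}{2(p+1)}\right)dx = -E_-(t_0;0,s_0-t_0).
\]
\item The slanted face $\Sigma_1 = C^-(s_0;t_0,s_0)$ has outward normal $(x/|x|,1)/\sqrt{2}$ (pointing toward increasing $|x|+t$), so is of type ``Backward Cone $\uparrow$'', contributing exactly $Q_-^-(s_0;t_0,s_0)$ by the definition of the flux $Q_-^-$.
\item The $t$-axis segment $[t_0,s_0]$ contributes $-\pi\mu([t_0,s_0])$ to the left-hand side of the flux identity according to the $\mp$ sign convention of Proposition \ref{energy flux formula} (upper sign for inward energy).
\end{itemize}

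Combining these three contributions with the right-hand side $-\mathcal{M}(\Omega)$, the energy flux formula for $\mathbf{V}_-$ on $\Omega$ reads
\[
-E_-(t_0;0,s_0-t_0)+Q_-^-(s_0;t_0,s_0)-\pi\mu([t_0,s_0]) = -\mathcal{M}(\Omega),
\]
and rearranging immediately yields
\[
E_-(t_0;0,s_0-t_0)=\pi\mu([t_0,s_0])+Q_-^-(s_0;t_0,s_0)+\mathcal{M}(\Omega),
\]
which is the stated identity; substituting $s_0 = t_0+r_0$ gives the second form.

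There is no real obstacle to this argument since Proposition \ref{energy flux formula} has already been proved in the previous subsection; the only care needed is bookkeeping of orientations and signs. The trickiest point is the degenerate boundary on the $t$-axis: one must verify that $\Omega$ is of the ``admissible'' form so that the measure correction $+\pi\mu([t_0,s_0])$ on the inward-energy side applies as stated (with the upper sign of $\mp$), and that the limit process defining $\mu$ on the axial boundary of $\Omega$ coincides with the one used to define $\mu$ in the energy flux formula — both of which are immediate from the construction of $\mu$ in Steps 1–4 of the proof of Proposition \ref{energy flux formula}.
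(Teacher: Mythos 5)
Your overall strategy is exactly the intended one: the paper offers no separate proof of the Cone Law, presenting it as an immediate application of Proposition \ref{energy flux formula} to the cone region, and your decomposition of $\partial\Omega$ (bottom disk of type ``Horizontally $\downarrow$'', slanted face of type ``Backward Cone $\uparrow$'' giving $Q_-^-(s_0;t_0,s_0)$, plus the degenerate $t$-axis segment $[t_0,s_0]$) together with the admissibility check on $\Phi$ is the right bookkeeping.

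There is, however, a sign error in the axis term that makes your final step a non sequitur. You assign the $t$-axis segment the contribution $-\pi\mu([t_0,s_0])$ on the left-hand side, ``upper sign for inward energy,'' and then display
\[
-E_-(t_0;0,s_0-t_0)+Q_-^-(s_0;t_0,s_0)-\pi\mu([t_0,s_0]) = -\mathcal{M}(\Omega),
\]
which rearranges to $E_- = Q_-^- - \pi\mu + \mathcal{M}$, \emph{not} to the claimed $E_- = \pi\mu + Q_-^- + \mathcal{M}$. The inward case is the one with $\mathbf{V}_-$, whose right-hand side in Proposition \ref{energy flux formula} is $-\mathcal{M}(\Omega)$, i.e.\ the \emph{lower} sign of ``$\pm$''; hence ``accordingly'' the measure correction $\mp\pi\mu$ takes its lower sign, and one must add $+\pi\mu([t_0,s_0])$ to the left-hand side. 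This is confirmed by the construction of $\mu$: the inner cylinder of $\Omega_r$ (oriented inward) contributes, in the limit $r\to0^+$, exactly $+\pi\mu([t_1,t_2])$ by \eqref{limit of vertical integral}; it is also what appears in the worked example following Table \ref{surface integrals in energy flux} (the term $+\pi\mu([-1,1])$ on the left-hand side) and in identity \eqref{trapezoid app 1}. With the correct sign the flux identity reads $-E_-+Q_-^-+\pi\mu([t_0,s_0])=-\mathcal{M}(\Omega)$, which does rearrange to the stated Cone Law. (In your partial defense, the row ``$|x|=0$'' of Table \ref{surface integrals in energy flux} lists $-\pi\mu$ for the inward case, which is at odds with the worked example and with every subsequent application in the paper; but your own justification via the ``$\mp$'' convention also picks the wrong branch, and the inconsistency between your displayed identity and your conclusion should have flagged the problem.)
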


\section{Energy Distribution of Solutions} \label{sec:distribution}

In this section we apply the inward/outward energy method to prove Theorem \ref{main 1} and then proves the weighted Morawetz estimates. Throughout this section we assume that $3\leq p\leq 5$ and $u$ is a solution to (CP1) with a finite energy $E$.

\subsection{Asymptotic Behaviour of Inward and Outward Energies}

\paragraph{An upper bound on $\mu(\Rm)$} All inward and outward energies are clearly bounded above by the full energy $E$, since $E= E_-(t)+E_+(t)$. We also know that all the energy fluxes $Q$'s are smaller or equal to $E$, according to Remark \ref{bound of Q}. Let us give a similar upper bound of $\mu(\Rm)$.
\begin{lemma}[Measure bound] \label{L2 bound}
 Let $u$ be a solution to (CP1) with an energy $E$. Then the measure $\mu$ in the energy flux formula satisfies $\pi \mu(\Rm) \leq E$.
\end{lemma}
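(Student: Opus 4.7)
The plan is to exploit the cone law (Proposition \ref{cone law}) to obtain a uniform bound on $\pi\mu([t_0,t_0+r_0])$ in terms of the total energy $E$, then take limits in $t_0$ and $r_0$.

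First, fix arbitrary $t_0 \in \Rm$ and $r_0>0$. Applying the cone law on the backward solid cone $\Omega = \{(x,t): t\geq t_0,\ |x|+t\leq t_0+r_0\}$ gives
\[
 \pi\mu([t_0,t_0+r_0]) = E_-(t_0;0,r_0) - Q_-^-(t_0+r_0;t_0,t_0+r_0) - \mathcal{M}(\Omega).
\]
Every term on the right-hand side except the first is manifestly nonnegative: the energy flux $Q_-^-$ is an integral of squared quantities and a positive multiple of $|u|^{p+1}$, and the Morawetz integral $\mathcal{M}(\Omega)$ is nonnegative by definition. Hence
\[
 \pi\mu([t_0,t_0+r_0]) \leq E_-(t_0;0,r_0) \leq E_-(t_0) \leq E_-(t_0) + E_+(t_0) = E,
\]
where the last equality uses Remark \ref{E plus add minus}.

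Finally, since $\mu$ is a (finite Borel) measure on $\Rm$, we have $\mu(\Rm) = \lim_{t_0\to-\infty,\ r_0\to+\infty} \mu([t_0,t_0+r_0])$ by monotone convergence. Passing to this limit in the bound above yields $\pi\mu(\Rm)\leq E$, as desired.

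The argument is essentially a one-line consequence of the cone law, so there is no real obstacle; the only thing to check is that the cone law indeed provides the required sign information, which is immediate from the definitions of $Q_-^-$ and $\mathcal{M}$.
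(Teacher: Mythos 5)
Your proof is correct and follows essentially the same route as the paper: both apply the cone law on a backward solid cone, discard the nonnegative terms $Q_-^-$ and $\mathcal{M}$, bound the remaining inward energy by the total energy $E$, and pass to the limit to exhaust $\Rm$. The only cosmetic difference is the order of operations (the paper first sends $t_0\to-\infty$ with the cone vertex fixed, then sends the vertex to $+\infty$, while you bound each compact interval uniformly and invoke continuity from below), which changes nothing of substance.
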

\begin{proof}
We apply cone law on the region $\Omega(s,t_0) = \{(x,t): |x|+t\leq s, t\geq t_0\}$ for any $t_0<s$ and obtain
\[
 E_-(t_0;0,s-t_0) = \pi \mu([t_0,s]) + Q_-^-(s;t_0,s) + \mathcal{M}(\Omega(s,t_0)),
\]
as shown in figure \ref{figure finitemu}. Making $t_0 \rightarrow -\infty$ we have 
\[
 \pi \mu((-\infty,s]) + Q_-^-(s) + \mathcal{M}(\{(x,t): |x|+t \leq s\}) \leq E.
\]
Finally we let $s \rightarrow +\infty$ in the inequality above to obtain $\pi \mu(\Rm) \leq E$. 
\end{proof}

 \begin{figure}[h]
 \centering
 \includegraphics[scale=0.9]{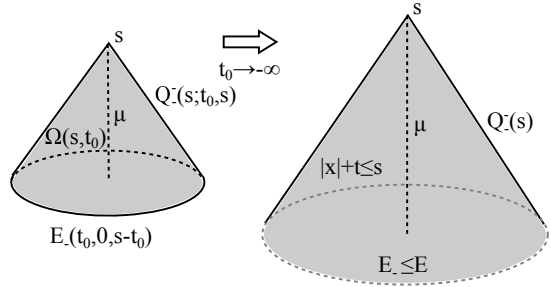}
 \caption{Illustration for proof of Lemma \ref{L2 bound}} \label{figure finitemu}
\end{figure}

\paragraph{Monotonicity and asymptotic behaviour} Next we consider the monotonicity and asymptotic behaviour of inward and outward energies as $t \rightarrow \pm \infty$. Let us first give a technical lemma.

\begin{lemma} \label{lift of r}
 Given any $t_0\in \Rm$ and $0<r_1<r_2$, we have
 \[
   \int_{r_1}^{r_2} Q_-^-(t_0+r;t_0,t_0+r) dr \leq r_2 \mathcal{M}(\Omega(t_0,r_1,r_2)).
 \]
 Here $\Omega(t_0,r_1,r_2) = \{(x,t): t\geq t_0,t_0+r_1\leq t+|x|\leq t_0+r_2\}$ is a cone shell region. In particular we have the following lower limit for any fixed $t_0\in \Rm$,
 \[
  \liminf_{r \rightarrow +\infty} Q_-^-(t_0+r;t_0,t_0+r) = 0.
 \]
\end{lemma}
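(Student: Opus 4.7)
My plan is to foliate the cone shell $\Omega(t_0,r_1,r_2)$ by the truncated backward light cones $C^-(t_0+r;t_0,t_0+r)$ indexed by $r\in[r_1,r_2]$, and to use this foliation to express $\int_{r_1}^{r_2} Q_-^-(t_0+r;t_0,t_0+r)\,dr$ as a bulk integral over $\Omega(t_0,r_1,r_2)$ which can be compared pointwise with the Morawetz integrand. The $\liminf$ conclusion will then follow from the finiteness of the global Morawetz bound in Corollary \ref{cor u morawetz}.

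First I would parametrize each cone $C^-(t_0+r;t_0,t_0+r)$ by $(\rho,\theta,\varphi)$ with $\rho=|x|\in[0,r]$ and $t=t_0+r-\rho$. A direct computation gives the surface measure $dS=\sqrt{2}\,\rho^2\sin\theta\,d\rho\,d\theta\,d\varphi$ (the $\sqrt 2$ coming from the cone's unit slope in $t$), which cancels the $1/\sqrt 2$ in the definition of $Q_-^-$, so
\[
Q_-^-(t_0+r;t_0,t_0+r) = \int_0^r\!\!\int_{S^2}\left(\frac{|u|^{p+1}}{p+1}+\frac{|\slashed{\nabla} u|^2}{2}\right)\rho^2\sin\theta\,d\theta\,d\varphi\,d\rho.
\]
Next I would introduce coordinates $(r,\rho,\theta,\varphi)$ on $\Omega(t_0,r_1,r_2)$ via $r=t+|x|-t_0$, $\rho=|x|$; the map $(t,\rho)\mapsto(r,\rho)$ has Jacobian $1$, so $dx\,dt=\rho^2\sin\theta\,d\rho\,d\theta\,d\varphi\,dr$, and Fubini delivers
\[
\int_{r_1}^{r_2} Q_-^-(t_0+r;t_0,t_0+r)\,dr = \iint_{\Omega(t_0,r_1,r_2)}\left(\frac{|u|^{p+1}}{p+1}+\frac{|\slashed{\nabla} u|^2}{2}\right) dx\,dt.
\]

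The pointwise comparison is then immediate: on $\Omega(t_0,r_1,r_2)$ one has $|x|=\rho\leq r\leq r_2$, and combined with $p\geq 3$ (so $\tfrac{p-1}{2}\geq 1$) this gives the two pointwise bounds
\[
\frac{|u|^{p+1}}{p+1}\;\leq\;r_2\cdot\frac{(p-1)|u|^{p+1}}{2(p+1)|x|},\qquad \frac{|\slashed{\nabla} u|^2}{2}\;\leq\;r_2\cdot\frac{|\slashed{\nabla} u|^2}{2|x|},
\]
which upon integration are exactly the desired inequality $\int_{r_1}^{r_2}Q_-^-(t_0+r;t_0,t_0+r)\,dr\leq r_2\,\mathcal{M}(\Omega(t_0,r_1,r_2))$.

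For the $\liminf$ statement I would argue by contradiction: if $\liminf_{r\to\infty}Q_-^-(t_0+r;t_0,t_0+r)=c>0$, then for all sufficiently large $R_0$ and every $r\geq R_0$ we have $Q_-^-(t_0+r;t_0,t_0+r)\geq c/2$, so the inequality applied on $[R_0,2R_0]$ yields $cR_0/2\leq 2R_0\,\mathcal{M}(\Omega(t_0,R_0,2R_0))$, i.e. $\mathcal{M}(\Omega(t_0,R_0,2R_0))\geq c/4$. However, each fixed $(x,t)$ lies in $\Omega(t_0,R_0,2R_0)$ only for $R_0$ in a bounded interval, so by dominated convergence against the globally integrable Morawetz integrand (Corollary \ref{cor u morawetz}), $\mathcal{M}(\Omega(t_0,R_0,2R_0))\to 0$ as $R_0\to\infty$, a contradiction. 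The only delicate point will be getting the cone surface element right so the $\sqrt 2$'s cancel cleanly, together with the observation that $p\geq 3$ is precisely what makes the pointwise comparison succeed.
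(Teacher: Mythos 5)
Your proposal is correct and follows essentially the same route as the paper: the Fubini/co-area identification of $\int_{r_1}^{r_2}Q_-^-\,dr$ with the bulk integral of $\frac{|u|^{p+1}}{p+1}+\frac{|\slashed{\nabla}u|^2}{2}$ over the cone shell, the pointwise comparison using $|x|\leq r_2$ together with $\frac{p-1}{2}\geq 1$, and an averaging over a dyadic interval $[R_0,2R_0]$ combined with the vanishing of $\mathcal{M}(\Omega(t_0,R_0,2R_0))$ as $R_0\to\infty$ (which the paper phrases via the mean value theorem rather than by contradiction). The extra details you supply --- the explicit $\sqrt{2}$ in the cone surface element and the dominated-convergence justification of $\mathcal{M}(\Omega(t_0,R_0,2R_0))\to 0$ --- are accurate and merely make explicit what the paper leaves implicit.
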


 \begin{figure}[h]
 \centering
 \includegraphics[scale=0.75]{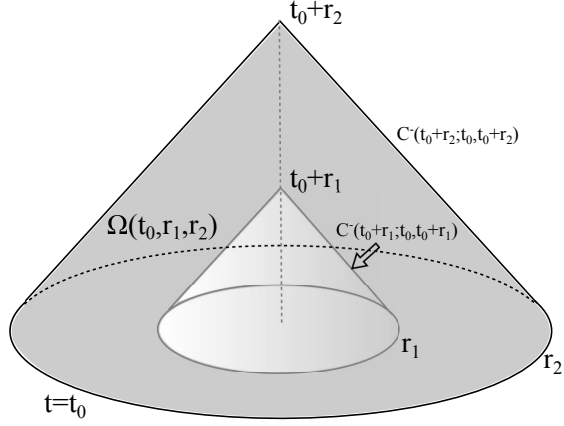}
 \caption{Illustration of integral region} \label{upperQ}
\end{figure}

\begin{proof}
 We may recall the definition of $Q_-^-$ and conduct a straightforward calculation
\begin{align*}
 \int_{r_1}^{r_2} Q_{-}^{-} (t_0+r;t_0,t_0+r) dr & = \frac{1}{\sqrt{2}} \int_{r_1}^{r_2} \int_{C^-(t_0+r; t_0, t_0+r)} \left(\frac{1}{p+1}|u|^{p+1} +  \frac{1}{2}\left|\slashed{\nabla} u\right|^2 \right) dS dr \\
 & =  \iint_{\Omega(t_0,r_1,r_2)} \left(\frac{1}{p+1}|u|^{p+1} +  \frac{1}{2}\left|\slashed{\nabla} u\right|^2 \right) dx dt\\
 & \leq r_2 \iint_{\Omega(t_0,r_1,r_2)} \left(\frac{p-1}{2(p+1)}\cdot \frac{|u|^{p+1}}{|x|} + \frac{1}{2}\cdot \frac{\left|\slashed{\nabla} u\right|^2}{|x|}\right) dx dt\\
 & = r_2 \mathcal{M}(\Omega(t_0,r_1,r_2)).
 \end{align*}
Here we use the fact $\Omega(t_0,r_1,r_2)  \subset B(0,r_2)\times \Rm$, as shown in figure \ref{upperQ}. In order to prove the lower limit, we choose $r_2 = 2r_1$ in the integral estimate above and apply the mean value theorem. We obtain
\[
 \inf_{r\in [r_1,2r_1]} Q_-^-(t_0+r;t_0,t_0+r) \leq \frac{1}{r_1} \int_{r_1}^{2r_1}  Q_-^-(t_0+r;t_0,t_0+r) dr  \leq 2 \mathcal{M}(\Omega(t_0,r_1,2r_1)).
\]
Finally we observe the fact $\mathcal{M}(\Omega(t_0,r_1,2r_1)) \rightarrow 0$ as $r_1\rightarrow +\infty$ to finish the proof.
\end{proof}

\noindent Now we are ready to prove the monotonicity and limits of inward/outward energies. This gives Part (a) of Theorem \ref{main 1}.

\begin{proposition}[Monotonicity and limits] \label{monotonicity of energies}
 The inward energy $E_-(t)$ is a decreasing function of $t$, while the outward energy $E_+(t)$ is an increasing function of $t$. In addition 
\begin{align*}
 &\lim_{t\rightarrow +\infty} E_-(t) = 0; & &\lim_{t\rightarrow -\infty} E_+(t) = 0.&
\end{align*}
\end{proposition}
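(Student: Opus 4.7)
The plan is to apply the general energy flux formula (Proposition \ref{energy flux formula}) to a cone-truncated slab and then open the cone to infinity via the lower-limit statement in Lemma \ref{lift of r}. This exploits the facts established in the preceding subsection, namely that the measure $\mu$ is finite (Lemma \ref{L2 bound}) and that the Morawetz integral $\mathcal{M}(\Rm^3\times\Rm)\lesssim_p E$ is finite (Corollary \ref{cor u morawetz}).

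For the monotonicity, fix $t_1<t_2$ and consider, for each $s>t_2$, the spatially radial region $\Omega(s)=\{(x,t):t_1\le t\le t_2,\ |x|+t\le s\}$. Its boundary pieces contribute, according to the table of surface integrals: a horizontal top giving $E_-(t_2;0,s-t_2)$, a horizontal bottom giving $-E_-(t_1;0,s-t_1)$, a backward-cone cap giving $Q_-^-(s;t_1,t_2)$, and the $t$-axis segment $[t_1,t_2]$ giving $-\pi\mu([t_1,t_2])$. The inward flux identity then reads
\[
E_-(t_2;0,s-t_2)-E_-(t_1;0,s-t_1)+Q_-^-(s;t_1,t_2)-\pi\mu([t_1,t_2])=-\mathcal{M}(\Omega(s)).
\]
Lemma \ref{lift of r} (applied with $t_0=t_1$) produces a sequence $r_n\to+\infty$ along which $Q_-^-(t_1+r_n;t_1,t_1+r_n)\to 0$, and this quantity dominates $Q_-^-(t_1+r_n;t_1,t_2)$ once $r_n\ge t_2-t_1$. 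Passing $s=t_1+r_n\to\infty$ and using monotone convergence in the remaining terms yields
\[
E_-(t_2)-E_-(t_1)=-\pi\mu([t_1,t_2])-\mathcal{M}(\Rm^3\times[t_1,t_2])\le 0,
\]
so $E_-$ is nonincreasing. The same computation for the outward flux flips the signs on both the $\mu$-contribution and the Morawetz right-hand side, so $E_+(t_2)-E_+(t_1)=\pi\mu([t_1,t_2])+\mathcal{M}(\Rm^3\times[t_1,t_2])\ge 0$, giving the nondecreasing property of $E_+$.

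For the vanishing limits, I apply the Cone Law (Proposition \ref{cone law}) at time $t_0$ with radius $r_0$ and obtain
\[
E_-(t_0;0,r_0)=\pi\mu([t_0,t_0+r_0])+Q_-^-(t_0+r_0;t_0,t_0+r_0)+\mathcal{M}(\Omega_{t_0,r_0}),
\]
where $\Omega_{t_0,r_0}=\{(x,t):t\ge t_0,\ |x|+t\le t_0+r_0\}$. Choosing a sequence $r_n\to+\infty$ with $Q_-^-(t_0+r_n;t_0,t_0+r_n)\to 0$ from Lemma \ref{lift of r} and letting $n\to\infty$, monotone convergence of the other two terms gives the clean identity
\[
E_-(t_0)=\pi\mu([t_0,+\infty))+\mathcal{M}(\{(x,t):t\ge t_0\}).
\]
Now let $t_0\to+\infty$: the first summand vanishes because $\mu$ is a finite measure, and the second vanishes by dominated convergence since the integrand is fixed and integrable on $\Rm^3\times\Rm$. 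This proves $\lim_{t\to+\infty}E_-(t)=0$. The companion limit $\lim_{t\to-\infty}E_+(t)=0$ follows from the time-reversed and outward analogue of the cone law, applied to the forward-cone region $\{(x,t):t\le t_0,\ |x|-t\le r_0-t_0\}$ and combined with the $\liminf$ statement for $Q_+^+$ that is the direct mirror of Lemma \ref{lift of r}.

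The only nonroutine ingredient is justifying the passage $r_n\to\infty$ in both steps, and this is exactly what Lemma \ref{lift of r} supplies via the finiteness of the interior Morawetz integral; once that is in hand every other step is either monotone convergence or rearrangement of the boundary pieces listed in the energy flux table, so no serious further obstacle arises.
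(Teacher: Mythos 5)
Your proposal follows essentially the same route as the paper: the flux formula on the truncated slab $\{t_1\le t\le t_2,\ |x|+t\le s\}$ with $s\to\infty$ along the subsequence furnished by Lemma \ref{lift of r} for the monotonicity, and the Cone Law opened to infinite radius followed by $t_0\to+\infty$ for the vanishing limit; all the essential ingredients (finiteness of $\mu$ and of $\mathcal{M}(\Rm^3\times\Rm)$, monotone convergence, the domination $Q_-^-(s;t_1,t_2)\le Q_-^-(s;t_1,s)$) are correctly identified. One small inconsistency: in your intermediate slab identity the $t$-axis contributes $-\pi\mu([t_1,t_2])$ to the left-hand side, which upon rearrangement would give $E_-(t_2)-E_-(t_1)=+\pi\mu([t_1,t_2])-Q_-^-(s;t_1,t_2)-\mathcal{M}$ and would \emph{not} yield monotonicity; the degenerate piece must contribute $+\pi\mu([t_1,t_2])$ to $\int_{\partial\Omega}\mathbf{V}_-\cdot d\mathbf{S}$ (as in the paper's worked example, equation \eqref{trapezoid app 1}, and the Cone Law), so that $E_-(t_2)-E_-(t_1)=-\pi\mu([t_1,t_2])-\mathcal{M}(\Rm^3\times[t_1,t_2])\le 0$, which is the final identity you in fact state. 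Your final conclusions are therefore correct, but the sign in the intermediate identity should be fixed so the two displays are compatible.
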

\begin{proof}
Let us prove the monotonicity and limit of inward energy. The outward energy can be dealt with in the same way. First of all, we apply inward energy flux formula on the truncated cone region $\Omega(t_1,t_2,s) = \{(x,t): t_1\leq t\leq t_2, |x|+t\leq s\}$ for $s\geq t_2>t_1$, as shown in the left upper corner of figure \ref{monotonicity}. 
\begin{align}
  E_-(t_2; 0, s-t_2) - E_-(t_1;0,s-t_1) = -\pi \mu([t_1,t_2]) - Q_-^-(s;t_1,t_2) - \mathcal{M}(\Omega(t_1,t_2,s)). \label{trapezoid app 1}
\end{align}
By Lemma \ref{lift of r}, we have 
\[
 \liminf_{s \rightarrow +\infty} Q_-^-(s;t_1,t_2) \leq \liminf_{s \rightarrow +\infty} Q_-^-(s;t_1,s) =0.
\]
Therefore we can make $s\rightarrow \infty$ in the identity above and obtain the monotonicity
\[
 E_-(t_2) - E_-(t_1) = -\pi \mu([t_1,t_2])  - \mathcal{M}(\Rm^3 \times [t_1,t_2]) < 0.
\]
This identity can be understood as the energy flux formula on the unbounded region $\Rm^3 \times [t_1,t_2]$, as shown in the right upper corner of figure \ref{monotonicity}. Please note that the rectangular boxes in this figure (and subsequent figures) represent unbounded regions. The hollow arrows besides the rectangular boxes indicate the directions in which the region may extend infinitely. Next we apply cone law on $\Omega(t_0,r_0) =\{(x,t): |x|+t\leq t_0+r_0, t\geq t_0\}$ with $t_0\in \Rm$ and $r_0>0$, as shown in the lower part of figure \ref{monotonicity}
\begin{align*}
 E_-(t_0;0,r_0) & = \pi \mu([t_0,t_0+r_0]) + Q_-^-(t_0+r_0; t_0, t_0+r_0) + \mathcal{M}(\Omega(t_0,r_0)).
\end{align*}
According to Lemma \ref{lift of r}, we can take a limit $r_0 \rightarrow +\infty$ and give an expression of $E_-(t_0)$ in terms of $\mu$ and Morawetz integral.
\begin{equation}
 E_-(t_0)  = \pi \mu([t_0,\infty)) +  \mathcal{M}(\Rm^3 \times [t_0,\infty)). \label{expression of E neg temp}
\end{equation}
Finally we can make $t_0\rightarrow +\infty$ and finish the proof.
\end{proof}

 \begin{figure}[h]
 \centering
 \includegraphics[scale=0.95]{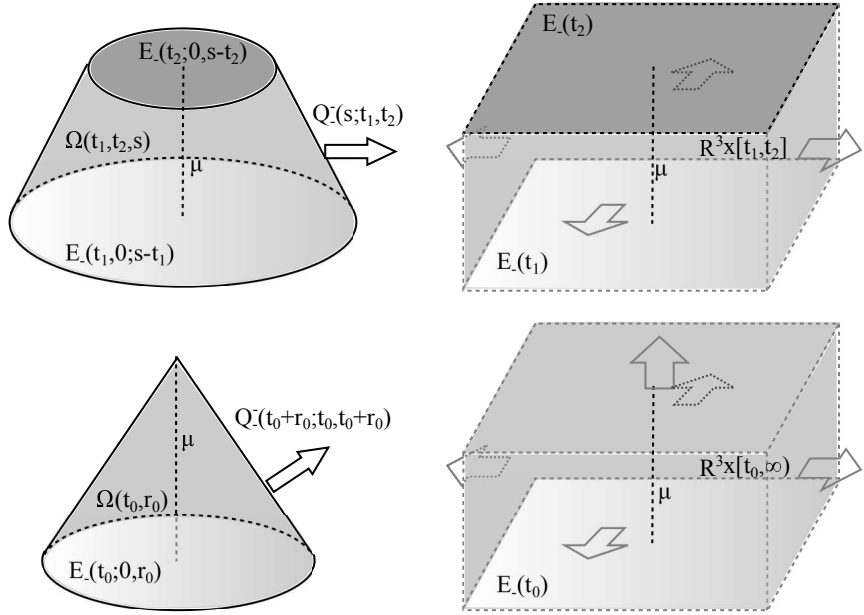}
 \caption{Illustration for proof of Proposition \ref{monotonicity of energies}} \label{monotonicity}
\end{figure}

\begin{corollary} \label{asymptotic behaviour 2}
 We have the limits 
 \begin{align*}
  &\lim_{t\rightarrow -\infty} E_-(t) = E; &  &\lim_{t\rightarrow +\infty} E_+(t) = E;&\\
  &\lim_{t\rightarrow \pm \infty} \int_{\Rm^3} \left(|u(x,t)|^{p+1} + \left|\slashed{\nabla} u(x,t)\right|^2\right) dx = 0.&  &&
 \end{align*}
\end{corollary}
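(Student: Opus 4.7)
The plan is to deduce this corollary directly from Proposition \ref{monotonicity of energies} combined with the identity $E_-(t) + E_+(t) = E$ established in Remark \ref{E plus add minus}. Essentially nothing new needs to be proved — this is a bookkeeping exercise exploiting the splitting of the energy into inward and outward parts.

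For the first two limits, I would simply write $E_-(t) = E - E_+(t)$ and $E_+(t) = E - E_-(t)$. Proposition \ref{monotonicity of energies} provides $\lim_{t\to +\infty} E_-(t) = 0$ and $\lim_{t\to -\infty} E_+(t) = 0$, so the two desired limits $\lim_{t\to -\infty} E_-(t) = E$ and $\lim_{t\to +\infty} E_+(t) = E$ follow immediately.

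For the second set of limits, I would exploit the fact that every summand in the definition of $E_\pm(t)$ is nonnegative. Recall
\[
 E_-(t) = \int_{\Rm^3} \left[\frac{1}{4}|\mathbf{L}_+ u|^2 + \frac{1}{4}|\slashed{\nabla} u|^2 + \frac{1}{2(p+1)}|u|^{p+1}\right] dx,
\]
so that for every $t$,
\[
 \frac{1}{4}\int_{\Rm^3} |\slashed{\nabla} u(x,t)|^2 dx + \frac{1}{2(p+1)}\int_{\Rm^3} |u(x,t)|^{p+1} dx \leq E_-(t).
\]
Letting $t\to +\infty$ and invoking $\lim_{t\to +\infty} E_-(t)=0$ gives $\int_{\Rm^3}(|u|^{p+1}+|\slashed{\nabla} u|^2)dx \to 0$. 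The analogous inequality with $E_+(t)$ (which dominates the same two nonnegative integrals) and the limit $\lim_{t\to -\infty} E_+(t)=0$ yields the case $t\to -\infty$.

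There is no real obstacle here; this is a one-line consequence of the monotonicity/limit result and the decomposition $E = E_- + E_+$. The only thing to be careful about is simply citing Remark \ref{E plus add minus} for the global identity (rather than any local version, which Remark \ref{E plus add minus} explicitly warns can fail) and noting that the potential and angular-gradient terms appear with positive coefficients in both $E_-$ and $E_+$.
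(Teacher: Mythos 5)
Your proposal is correct and is exactly the argument the paper intends: the corollary is stated without proof precisely because it follows immediately from Proposition \ref{monotonicity of energies} together with the identity $E_-(t)+E_+(t)=E$ of Remark \ref{E plus add minus}, and from the observation that the nonnegative terms $\frac{1}{4}|\slashed{\nabla} u|^2+\frac{1}{2(p+1)}|u|^{p+1}$ appear in both $E_-(t)$ and $E_+(t)$. Nothing further is needed.
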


\begin{remark} \label{expression of E neg}
Making $t \rightarrow -\infty$ in identity \eqref{expression of E neg temp} we have
\begin{equation*}
  \pi \mu(\Rm) +  \mathcal{M}(\Rm^3 \times \Rm) = E.
\end{equation*}
This means that all the energy eventually changes from inward energy to outward energy in either of the following ways
\begin{itemize}
 \item Inward waves travel through the origin and become outward waves. The amount of energy carried by these inward waves is $\pi \mu(\Rm)$.
 \item Inward energy is transformed to outward energy by the effect of wave propagation at all times and places. The total amount of energy transformed in this way is exactly the integral $\mathcal{M}(\Rm^3 \times \Rm)$.
\end{itemize}
\end{remark}

\noindent We also have asymptotic behaviour of energy flux
\begin{proposition} \label{a limit of Q}
 We have the limits
 \begin{align*}
  &\lim_{s \rightarrow +\infty} Q_-^-(s) = 0;& &\lim_{\tau \rightarrow -\infty} Q_+^+(\tau) = 0.&
 \end{align*}
\end{proposition}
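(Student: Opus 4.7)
The plan is to apply the cone law (Proposition \ref{cone law}) to the truncated solid backward cone $\Omega(t_0,s) = \{(x,t): t\geq t_0,\ |x|+t\leq s\}$ with $t_0<s$, which yields
\[
 E_-(t_0;0,s-t_0) = \pi \mu([t_0,s]) + Q_-^-(s;t_0,s) + \mathcal{M}(\Omega(t_0,s)).
\]
I would then subtract this identity from the expression \eqref{expression of E neg temp} derived during the proof of Proposition \ref{monotonicity of energies},
\[
 E_-(t_0) = \pi \mu([t_0,\infty)) + \mathcal{M}(\Rm^3\times[t_0,\infty)).
\]
Observing that $(\Rm^3\times[t_0,\infty))\setminus\Omega(t_0,s) = \{(x,t): t\geq t_0,\ |x|+t>s\}$ and that $E_-(t_0)-E_-(t_0;0,s-t_0) = E_-(t_0;s-t_0,\infty) \geq 0$, the subtraction isolates the flux term and immediately gives the clean upper bound
\[
 Q_-^-(s;t_0,s) \leq \pi \mu((s,\infty)) + \mathcal{M}\bigl(\{(x,t): |x|+t>s\}\bigr).
\]

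Next I would let $t_0 \to -\infty$: the truncated cone $C^-(s;t_0,s)$ expands monotonically up to the full cone $C^-(s)$, so monotone convergence gives $Q_-^-(s;t_0,s)\nearrow Q_-^-(s)$, and the inequality above passes to the limit. Finally, sending $s \to +\infty$, the two terms on the right both tend to $0$: the measure tail $\mu((s,\infty))$ vanishes because $\pi\mu(\Rm) \leq E$ is finite by Lemma \ref{L2 bound}, and the Morawetz tail $\mathcal{M}(\{|x|+t>s\})$ vanishes by continuity of the finite measure $\mathcal{M}$ on $\Rm^3\times\Rm$, whose finiteness was recorded in Remark \ref{expression of E neg}. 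This establishes $\lim_{s\to+\infty} Q_-^-(s) = 0$. The companion statement $\lim_{\tau\to-\infty} Q_+^+(\tau) = 0$ then follows by the mirror-image argument: apply the outward-energy cone law (the $E_+$ analogue of Proposition \ref{cone law}) on the truncated forward solid cone $\{(x,t): t\leq t_1,\ t-|x|\geq\tau\}$, combine it with the $E_+$ analogue of \eqref{expression of E neg temp}, and send $t_1 \to +\infty$ followed by $\tau \to -\infty$.

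The only real obstacle here is bookkeeping --- keeping track of signs, of which half-tail of $\mu$ appears, and of which spatial-temporal subregion shows up in each application of the flux formula. The decisive step that bypasses any serious analytic difficulty is dropping the nonnegative quantity $E_-(t_0;s-t_0,\infty)$ before passing to the limit, which spares us from having to analyze the inward energy of $u$ at time $t_0$ on the exterior region $\{|x|>s-t_0\}$ as $t_0 \to -\infty$.
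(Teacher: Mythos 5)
Your proof is correct. The paper reaches the same conclusion from the complementary region: it applies the flux formula to the exterior cone shell $\{(x,t): t_0\le t\le s,\ s\le |x|+t\le s'\}$, sends $s'\to+\infty$ (using Lemma \ref{lift of r} to discard $Q_-^-(s';t_0,s)$) and then $t_0\to-\infty$, arriving at
\[
 E_-(s)+\mathcal{M}\bigl(\{(x,t): |x|+t\ge s,\ t\le s\}\bigr)=\lim_{t\to-\infty}E_-(t;s-t,\infty)+Q_-^-(s),
\]
from which $Q_-^-(s)\le E_-(s)+\mathcal{M}(\{|x|+t\ge s,\ t\le s\})\to 0$ after dropping the nonnegative limiting exterior inward energy. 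Your route --- the cone law on the interior solid cone subtracted from the global identity \eqref{expression of E neg temp} --- produces the bound $Q_-^-(s)\le\pi\mu((s,\infty))+\mathcal{M}(\{|x|+t>s\})$, which, in view of $E_-(s)=\pi\mu([s,\infty))+\mathcal{M}(\Rm^3\times[s,\infty))$, is essentially the same estimate. What your version buys is that the limit $s'\to+\infty$ and the accompanying appeal to Lemma \ref{lift of r} need not be repeated here (they are already buried inside the derivation of \eqref{expression of E neg temp}), and the two quantities that must vanish are exhibited directly as tails of the two finite measures $\mu$ and $\mathcal{M}$; what the paper's version buys is an exact identity expressing $Q_-^-(s)$ in terms of $E_-(s)$, a Morawetz tail and the limiting exterior inward energy, which is slightly more informative than a one-sided bound. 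All the supporting facts you invoke (the cone law for arbitrary $t_0<s$, monotone convergence giving $Q_-^-(s;t_0,s)\nearrow Q_-^-(s)$, finiteness of $\mu(\Rm)$ and of $\mathcal{M}(\Rm^3\times\Rm)$, and the time-reversed argument for $Q_+^+$) are available at this point in the paper with no circularity.
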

\begin{proof}
 Again we only give a proof for inward energy flux. An application of inward energy flux on the region $\Omega(t_0,s,s') = \{(x,t): t_0\leq t\leq s, s\leq |x|+t\leq s'\}$ with $t_0<s<s'$ gives (Please see left upper part of figure \ref{limitQ})
\begin{align}
  E_-(s; 0, s'-s) - E_-(t_0;s-t_0,s'-t_0) = Q_-^-(s;t_0,s)- Q_-^-(s';t_0,s) - \mathcal{M}(\Omega(t_0,s,s')). 
\end{align} 
Making $s'\rightarrow \infty$ we can discard the term $Q_-^-(s';t_0,s)$ as in the proof of Proposition \ref{monotonicity of energies} and rewrite the identity above into
\begin{align*}
 E_-(s) + \mathcal{M}(\{(x,t): |x|+t\geq s, t_0\leq t\leq s\}) = E_-(t_0; s-t_0,\infty) + Q_-^-(s;t_0,s),
\end{align*}
as shown in left lower part of figure \ref{limitQ}. Next we can take a limit as $t_0\rightarrow - \infty$.
\[
 E_-(s) +  \mathcal{M}(\{(x,t): |x|+t\geq s, t\leq s\}) = \lim_{t\rightarrow -\infty} E_-(t;s-t,\infty) + Q_-^-(s).
\]
Please see right half of figure \ref{limitQ}. Finally we observe that both terms in left hand  converges to zero as $s \rightarrow +\infty$ and finish the proof. 
\end{proof}

 \begin{figure}[h]
 \centering
 \includegraphics[scale=1.2]{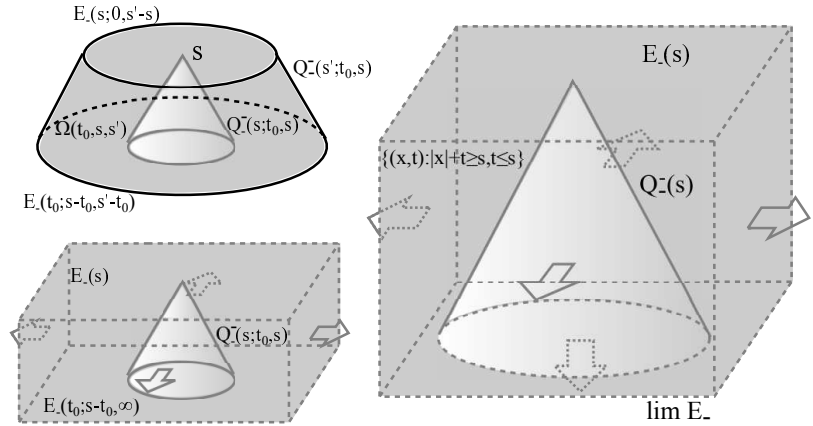}
 \caption{Illustration for proof of Proposition \ref{a limit of Q}} \label{limitQ}
\end{figure}

\paragraph{Asymptotic travel speed of energy} Finally we prove that the energy eventually travels to the infinity at a speed at least close to the light speed as $t \rightarrow \pm \infty$.

\begin{proposition} \label{inner decay}
 For any constant $c \in (0,1)$, we have
 \[
  \lim_{t \rightarrow \pm \infty} E_{\pm} (t; 0, c|t|) = 0.
 \]
\end{proposition}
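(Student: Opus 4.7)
The plan is to apply the cone law (Proposition \ref{cone law}) on a suitable backward cone for the inward case, then show each of the three resulting nonnegative terms vanishes in the limit. Consider first $t=-T$ with $T\to+\infty$. Applying the cone law with $t_0=-T$, $r_0=cT$ on the region $\Omega_{\mathrm{cone}}=\{(x,s):s\ge-T,\ |x|+s\le-(1-c)T\}$ yields
\begin{equation*}
E_-(-T;0,cT)=\pi\mu([-T,-(1-c)T])+Q_-^-(-(1-c)T;-T,-(1-c)T)+\mathcal{M}(\Omega_{\mathrm{cone}}).
\end{equation*}
Since $\Omega_{\mathrm{cone}}\subset\Rm^3\times[-T,-(1-c)T]$ and both $\mu(\Rm)\le E/\pi$ (Lemma \ref{L2 bound}) and $\mathcal{M}(\Rm^3\times\Rm)\lesssim_p E$ (Corollary \ref{cor u morawetz}) are finite, the first and third terms on the right tend to zero as $T\to+\infty$ by the tail behaviour of these finite measures.

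The main obstacle is the truncated flux $Q_-^-(-(1-c)T;-T,-(1-c)T)$, which is not directly controlled: Proposition \ref{a limit of Q} only gives $Q_-^-(s)\to 0$ as $s\to+\infty$, whereas here $-(1-c)T\to-\infty$. To get around this, I will fix $\epsilon\in(0,1-c)$ and apply Lemma \ref{lift of r} with $t_0=-T$, $[r_1,r_2]=[cT,(c+\epsilon)T]$:
\begin{equation*}
\int_{cT}^{(c+\epsilon)T}Q_-^-(-T+r;-T,-T+r)\,dr\le(c+\epsilon)T\cdot\mathcal{M}\bigl(\Omega(-T,cT,(c+\epsilon)T)\bigr).
\end{equation*}
The cone-shell region on the right lies in $\Rm^3\times[-T,-(1-c-\epsilon)T]$, so its Morawetz integral tends to $0$ with $T$. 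Dividing by the interval length $\epsilon T$ and invoking the mean value theorem produces a sequence $r_T\in[cT,(c+\epsilon)T]$ along which $Q_-^-(-T+r_T;-T,-T+r_T)\to 0$.

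Now set $c_T=r_T/T\in[c,c+\epsilon]$ and reapply the cone law with radius $c_T T$; the same estimates as in the first paragraph force $\pi\mu$ and $\mathcal{M}$ terms to zero, and our choice of $r_T$ forces the flux term to zero as well, so $E_-(-T;0,c_TT)\to 0$. Since $c_T\ge c$, the monotonicity of $r\mapsto E_-(-T;0,r)$ (integral of a nonnegative integrand over a growing ball) gives $E_-(-T;0,cT)\le E_-(-T;0,c_TT)\to 0$, completing the inward case. The outward statement $\lim_{t\to+\infty}E_+(t;0,ct)=0$ follows by a perfectly symmetric argument: one first derives the outward analog of the cone law by applying the outward energy flux formula on $\{(x,s):s\le t,\ s-|x|\ge(1-c)t\}$ (yielding $E_+(t;0,ct)=Q_+^+((1-c)t;(1-c)t,t)+\pi\mu([(1-c)t,t])+\mathcal{M}(\Omega^+)$), then establishes a forward-cone analog of Lemma \ref{lift of r} via the change of variables $(\tau,|x|)\mapsto(|x|,s=\tau+|x|)$, and finally repeats the mean-value-plus-monotonicity argument with $\tau\in[(1-c-\epsilon)t,(1-c)t]$ to obtain $c_t\ge c$.
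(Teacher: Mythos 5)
Your proof is correct and follows essentially the same route as the paper: both apply the cone law, kill the $\mu$ and $\mathcal{M}$ terms by tail decay of finite measures, and handle the troublesome truncated flux by averaging over the cone radius via Lemma \ref{lift of r} together with the monotonicity of $r\mapsto E_-(t;0,r)$. The only (cosmetic) difference is that you extract a single good radius $r_T$ by the mean value theorem and then invoke monotonicity, whereas the paper bounds $E_-(t;0,c|t|)$ directly by the average of $E_-(t;0,r)$ over $r\in[c|t|,\tfrac{1+c}{2}|t|]$ and averages the flux term.
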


 \begin{figure}[h]
 \centering
 \includegraphics[scale=0.8]{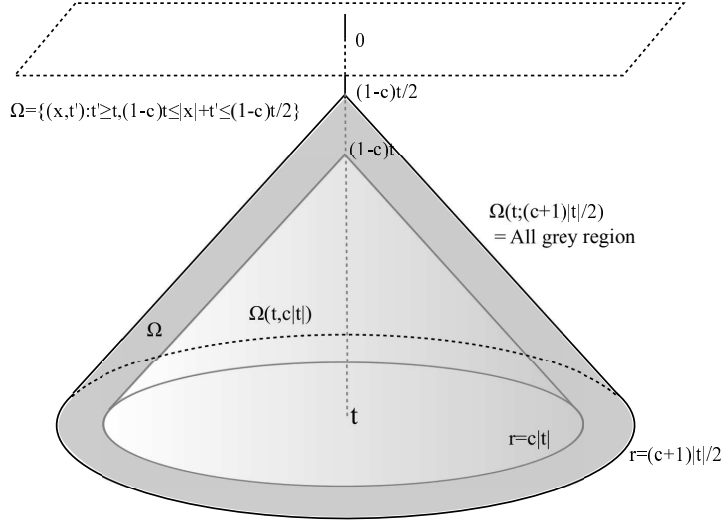}
 \caption{Illustration for proof of Proposition \ref{inner decay}} \label{figure tct}
\end{figure}

\begin{proof}
 Without loss of generality we prove the limit of inward energy. Figure \ref{figure tct} gives an illustration of the proof and may be helpful for readers. We start by
 \begin{equation}
  E_-(t;0,c|t|) \leq E_-(t;0,r),\; \forall r>c|t|;\; \Rightarrow \; E_-(t;0,c|t|) \leq \frac{2}{(1-c)|t|} \int_{c|t|}^{\frac{c+1}{2}|t|} E_-(t;0,r) dr. \label{upper bound ct}
 \end{equation}
 We then apply the cone law on $\Omega(t,r) = \{(x,t'): t'\geq t, t'+|x|\leq t+r\}$ with $r\in [c|t|,(c+1)|t|/2]$.
 \begin{align*}
  E_-(t;0,r) & = \pi \mu([t,t+r]) + Q_-^-(t+r;t,t+r) + \mathcal{M}(\Omega(t,r)) \\
   & \leq \sup_{r \in \left(c|t|, \frac{c+1}{2}|t|\right)} \Big\{\pi \mu([t,t+r]) + \mathcal{M}(\Omega(t,r))\Big\} + Q_-^-(t+r;t,t+r)\\
   & = \pi \mu([t,(1-c)t/2]) + \mathcal{M}(\Omega(t,(c+1)|t|/2)) + Q_-^-(t+r;t,t+r)\\
   & = P(t) + Q_-^-(t+r;t,t+r).
 \end{align*}
 Here the function $P(t) =  \pi \mu([t,(1-c)t/2]) + \mathcal{M}(\Omega(t,(c+1)|t|/2)) \rightarrow 0$ as $t\rightarrow -\infty$. We may plug this upper bound of $E_-(t;0,r)$ in the inequality \eqref{upper bound ct} and obtain 
 \begin{align}
  E_-(t;0,c|t|) & \leq P(t) + \frac{2}{(1-c)|t|} \int_{c|t|}^{\frac{c+1}{2}|t|} Q_-^-(t+r;t,t+r) dr \nonumber \\
   & \leq P(t) + \frac{1+c}{1-c} \mathcal{M}\left(\left\{(x,t'): t'\geq t, (1-c)t\leq t'+|x|\leq \frac{1-c}{2}t\right\}\right). \label{final upper bound1}
 \end{align}
 Here we apply Lemma \ref{lift of r}. Because both terms in the right hand side of inequality \eqref{final upper bound1} converge to zero as $t \rightarrow -\infty$, we finally finish the proof.
\end{proof}
\noindent Before we conclude this subsection, we verify Part (b) of Theorem \ref{main 1}. 
\begin{corollary}
 Given any constant $c \in (0,1)$, we have
 \[
  \lim_{t \rightarrow \pm \infty} \int_{|x|<c|t|} \left(\frac{1}{2}|\nabla u(x,t)|^2 + \frac{1}{2}|u_t(x,t)|^2 + \frac{1}{p+1}|u(x,t)|^{p+1}\right) dx = 0.
 \]
\end{corollary}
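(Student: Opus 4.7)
The plan is to reduce the claim directly to Proposition \ref{inner decay} and Proposition \ref{monotonicity of energies} via an algebraic identity expressing the full energy on a ball as a sum of inward and outward energies plus a nonnegative boundary contribution.

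First, I would expand $|\mathbf{L}_\pm u|^2 = |\mathbf{L} u|^2 + |u_t|^2 \pm 2 u_t\, \mathbf{L} u$ inside the definitions of $E_\pm(t;0,R)$ to obtain
\[
E_+(t;0,R) + E_-(t;0,R) = \int_{|x|<R}\left[\frac{1}{2}|\mathbf{L} u|^2 + \frac{1}{2}|u_t|^2 + \frac{1}{2}|\slashed{\nabla} u|^2 + \frac{1}{p+1}|u|^{p+1}\right]dx.
\]
Combining this with the identity from Remark \ref{relationship of u w energy},
\[
\int_{|x|<R}|\mathbf{L} u|^2\, dx = \int_{|x|<R}|u_r|^2\, dx + \frac{1}{R}\int_{|x|=R}|u|^2\, d\sigma_R,
\]
and the pointwise decomposition $|\nabla u|^2 = |u_r|^2 + |\slashed{\nabla} u|^2$, I arrive at the key identity
\[
\int_{|x|<R}\left[\frac{1}{2}|\nabla u|^2 + \frac{1}{2}|u_t|^2 + \frac{1}{p+1}|u|^{p+1}\right]dx = E_+(t;0,R) + E_-(t;0,R) - \frac{1}{2R}\int_{|x|=R}|u(x,t)|^2\, d\sigma_R(x).
\]
Since the boundary term is nonnegative, I may discard it and obtain the one-sided bound by $E_+(t;0,R) + E_-(t;0,R)$.

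Setting $R = c|t|$, the proof reduces to showing $E_+(t;0,c|t|) + E_-(t;0,c|t|) \to 0$ as $t \to \pm\infty$. For each direction exactly one of these terms is handled directly by Proposition \ref{inner decay}, while the complementary one is controlled by monotonicity: as $t \to +\infty$, Proposition \ref{monotonicity of energies} gives $E_-(t;0,c t) \leq E_-(t) \to 0$, and as $t \to -\infty$ it gives $E_+(t;0,c|t|) \leq E_+(t) \to 0$. Adding the two bounds yields the claim in each time direction.

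There is no serious obstacle here; the argument is essentially bookkeeping. The only point requiring some care is verifying that the boundary contribution in the ball identity appears with the correct nonnegative sign so that it can be dropped without further estimation — this is guaranteed by the first identity in Remark \ref{relationship of u w energy}.
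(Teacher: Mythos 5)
Your proposal is correct and follows essentially the same route as the paper: the paper's proof is precisely the inequality $\int_{|x|<c|t|}\left(\frac{1}{2}|\nabla u|^2+\frac{1}{2}|u_t|^2+\frac{1}{p+1}|u|^{p+1}\right)dx\leq E_-(t;0,c|t|)+E_+(t;0,c|t|)$ justified by Remark \ref{relationship of u w energy}, followed by the same appeal to Proposition \ref{inner decay} for one term and Proposition \ref{monotonicity of energies} for the other. You have merely written out explicitly the algebra (including the sign of the discarded boundary term $\frac{1}{2R}\int_{|x|=R}|u|^2\,d\sigma_R$) that the paper leaves implicit, and that algebra checks out.
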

\begin{proof}
 By the definition of $E_\pm$ and Remark \ref{relationship of u w energy} we have
 \begin{align*}
 \int_{|x|<c|t|} \left(\frac{1}{2}|\nabla u|^2 + \frac{1}{2}|u_t|^2 + \frac{1}{p+1}|u|^{p+1}\right) dx \leq E_-(t; 0, c|t|) + E_+(t; 0, c|t|).
 \end{align*}
 The right hand converges to zero as $t \rightarrow \pm \infty$, thanks to Proposition \ref{monotonicity of energies} and Proposition \ref{inner decay}.
\end{proof}

\subsection{Weighted Morawetz Estimates}
\begin{proposition}[General Weighted Morawetz] \label{Weighted Morawetz General}
 Let $3\leq p<5$ and $0<\gamma<1$. Assume that the function $a(r) \in C([0,\infty))$ satisfies
\begin{itemize}
 \item $a(r)$ is absolutely continuous in $[0,R]$ for any $R \in \Rm^+$;
 \item Its derivative satisfies $0 \leq a'(r) \leq \gamma a(r)/r$ almost everywhere $r>0$.
\end{itemize}
 If $u$ is a solution to (CP1) with a finite energy so that 
 \begin{align*}
  K_1 =  \int_{\Rm^3} a(|x|) \left[\frac{1}{4}\left|(\mathbf{L}_+ (u_0,u_1))(x)\right|^2 + \frac{1}{4}|\slashed{\nabla} u_0(x)|^2 + \frac{1}{2(p+1)}|u_0(x)|^{p+1} \right] dx < \infty,
 \end{align*}
 then we have 
 \begin{align*}
   \pi \int_0^\infty a(t) d\mu(t) + \iint_{\Rm^3 \times \Rm^+} a(|x|+t)\left(\frac{p-1-2\gamma}{2(p+1)}\cdot\frac{|u|^{p+1}}{|x|} + \frac{1-\gamma}{2} \cdot\frac{|\slashed{\nabla} u|^2}{|x|} \right) dxdt \leq K_1,
 \end{align*}
 where $\mu$ is the measure in the energy flux formula.
\end{proposition}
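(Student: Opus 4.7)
The plan is to integrate the cone law (Proposition \ref{cone law}) against the density $a'(s)\,ds$. For each $s>0$ the cone law at $t_0=0$ applied to $\Omega_s=\{(x,t):t\geq 0,\,|x|+t\leq s\}$ reads $E_-(0;0,s)=\pi\mu([0,s])+Q_-^-(s;0,s)+\mathcal{M}(\Omega_s)$. I would multiply by $a'(s)\geq 0$ and integrate over $s\in[0,R]$. Straightforward Fubini/Stieltjes bookkeeping (the latter justified by the finiteness of $\mu$ recorded in Lemma \ref{L2 bound}) transforms every term into a closed form, and applying the cone law once more at $s=R$ cancels all the $a(R)$-weighted boundary contributions up to the nonnegative remainder $a(R)Q_-^-(R;0,R)$, which is simply discarded.

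What survives is the inequality
\[
\pi\!\int_{[0,R]}\!\!a(t)\,d\mu(t)+\iint_{\Omega_R}\!\!a(|x|+t)g(x,t)\,dxdt
\;\leq\;\int_{|x|<R}\!\!a(|x|)e_-(x)\,dx+\int_0^R\!\!a'(s)Q_-^-(s;0,s)\,ds,
\]
where $g$ denotes the Morawetz density of $\mathcal{M}$ and $e_-=\tfrac14|\mathbf{L}_+(u_0,u_1)|^2+\tfrac14|\slashed{\nabla}u_0|^2+\tfrac{|u_0|^{p+1}}{2(p+1)}$ is the inward energy density of the initial data. The main obstacle is the last term on the right, which is nonnegative and needs to be absorbed into the Morawetz double integral on the left. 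This is exactly where the hypothesis $a'(r)\leq\gamma a(r)/r$ plays its role. Parametrising the backward cone $C^-(s;0,s)$ by its spatial projection gives $dS=\sqrt 2\,dx$; swapping the order of integration and changing variables $s\mapsto t=s-|x|$ then rewrites $\int_0^R a'(s)Q_-^-(s;0,s)\,ds$ as $\iint_{\Omega_R} a'(|x|+t)\bigl(\tfrac{|u|^{p+1}}{p+1}+\tfrac{|\slashed{\nabla}u|^2}{2}\bigr)\,dxdt$, and the bound $a'(|x|+t)\leq\gamma a(|x|+t)/(|x|+t)\leq\gamma a(|x|+t)/|x|$ shows that this is at most $\tfrac{2\gamma}{p-1}$ times the $|u|^{p+1}$-piece plus $\gamma$ times the $|\slashed{\nabla}u|^2$-piece of $\iint_{\Omega_R} a(|x|+t)g\,dxdt$.

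Moving those multiples to the left replaces the coefficients $\tfrac{p-1}{2(p+1)}$ and $\tfrac12$ in $g$ by $\tfrac{p-1-2\gamma}{2(p+1)}$ and $\tfrac{1-\gamma}{2}$, both strictly positive under the hypotheses $p\geq 3$ and $\gamma<1$. Taking $R\to\infty$ by monotone convergence on every surviving nonnegative integrand and bounding the initial-data side by $K_1$ completes the proof.
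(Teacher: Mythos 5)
Your argument is correct, and the core mechanism is the same as the paper's: weight an energy-flux identity in the cone parameter $s$ by $a'(s)\,ds$, integrate over $[0,R]$, rewrite the resulting flux term $\int_0^R a'(s)Q_-^-(s;0,s)\,ds$ as $\iint_{\Omega_R}a'(|x|+t)\bigl(\tfrac{1}{p+1}|u|^{p+1}+\tfrac{1}{2}|\slashed{\nabla}u|^2\bigr)\,dxdt$ via the parametrization $dS=\sqrt{2}\,dx$, and absorb it into the Morawetz integral through $a'(|x|+t)\leq \gamma a(|x|+t)/(|x|+t)\leq \gamma a(|x|+t)/|x|$, which produces exactly the coefficients $\tfrac{p-1-2\gamma}{2(p+1)}$ and $\tfrac{1-\gamma}{2}$. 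The difference is in which family of regions you integrate over. The paper applies the flux formula to the exterior regions $\{t\geq 0,\ |x|+t\geq s\}$, which requires first passing to the limit $s'\to\infty$ using Proposition \ref{a limit of Q} (the vanishing of $Q_-^-(s')$) and then handling the additive constant $a_R(0)$ by adding $a_R(0)$ times the identity \eqref{expression of E neg temp}; the outcome at finite $R$ is an exact identity before the absorption step. You instead integrate the cone law (Proposition \ref{cone law}) over the complementary interior regions $\{t\geq 0,\ |x|+t\leq s\}$ and subtract the result from $a(R)$ times the cone law at $s=R$; this costs you only the discarded nonnegative term $a(R)Q_-^-(R;0,R)$ and yields an inequality one step earlier, but it avoids both Proposition \ref{a limit of Q} and \eqref{expression of E neg temp}, so your route is slightly more self-contained. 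The Fubini/Stieltjes manipulations are justified, as you note, by $\mu(\Rm)<\infty$ (Lemma \ref{L2 bound}) and $a'\geq 0$, and the final passage $R\to\infty$ by monotone convergence is unproblematic since the hypotheses force $a\geq 0$.
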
 

 \begin{figure}[h]
 \centering
 \includegraphics[scale=1.1]{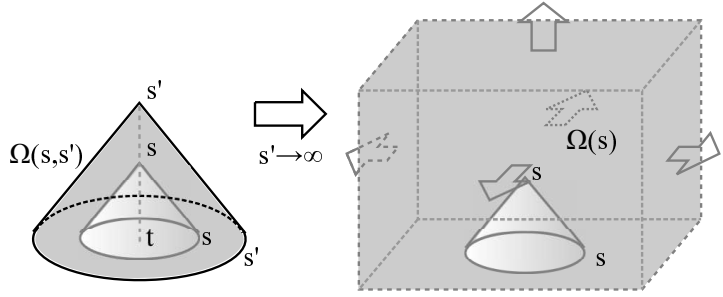}
 \caption{Illustration of regions in the proof of Proposition \ref{Weighted Morawetz General}} \label{figure weightM}
\end{figure}

\begin{proof}
According to Proposition \ref{global existence finite energy}, the solution $u$ is defined for all time $t \in \Rm$. We apply the energy flux formula of inward energy on the region $\Omega(s,s')=\{(x,t): s\leq |x|+t\leq s', t\geq 0\}$, as shown in figure \ref{figure weightM}, and obtain ($s<s'$)
 \begin{align*}
    E_-(0,s,s')  - Q_-^-(s',0,s')-\pi \mu([s,s']) +Q_-^-(s; 0,s)  - \mathcal{M}(\Omega(s,s')) = 0.
 \end{align*}
 Next we recall $\displaystyle \lim_{s'\rightarrow +\infty} Q_-^-(s') = 0$ by Proposition \ref{a limit of Q}, make $s'\rightarrow +\infty$ and obtain the energy flux formula for unbounded region $\Omega(s)=\{(x,t):t\geq 0, |x|+t\geq s\}$.
 \[
  E_-(0,s,\infty)  -\pi \mu([s,\infty)) +Q_-^-(s; 0,s)  - \mathcal{M}(\Omega(s))= 0.
 \]
 We move the terms with a negative sign to the other side of the identity, then write down the details about energy flux $Q$, inward energy $E_-$ and Morawetz integral $\mathcal{M}$.
 \begin{align}
  &\pi \mu([s,\infty)) + \iint_{\Omega(s)} \left(\frac{p-1}{2(p+1)}\cdot \frac{|u|^{p+1}}{|x|} + \frac{1}{2} \cdot\frac{|\slashed{\nabla} u|^2}{|x|} \right) dx dt \nonumber\\
  &\quad =  \int_{|x|>s} e_-(x,0) dx + \frac{1}{\sqrt{2}} \int_{C^-(s;0,s)} \left(\frac{1}{p+1}|u|^{p+1} +  \frac{1}{2}\left|\slashed{\nabla} u\right|^2 \right) dS. \label{energy flux for omega s}
 \end{align}
Here we use the following notation for convenience. 
\[
 e_-(x,0) = \frac{1}{4}\left|(\mathbf{L}_+ (u_0,u_1))(x)\right|^2 + \frac{1}{4}|\slashed{\nabla} u_0(x)|^2 + \frac{1}{2(p+1)}|u_0(x)|^{p+1}.
\]
We then multiply both sides by $a'(s)$ and integrate for $s$ from $0$ to $R\gg 1$
\begin{align}
 & \pi \! \int_0^\infty \![a_R(t)\!-\!a_R(0)] d\mu(t) + \iint_{\Rm^3\times \Rm^+}\! [a_R(|x|+t)\!-\!a_R(0)]\left(\frac{p-1}{2(p+1)}\cdot \frac{|u|^{p+1}}{|x|} + \frac{1}{2} \cdot\frac{|\slashed{\nabla} u|^2}{|x|} \right) dxdt \nonumber\\
 &\quad =  \int_{\Rm^3} [a_R(|x|)-a_R(0)] e_-(x,0) dx + \iint_{\Omega(0,R)} a'(|x|+t)\left[\frac{1}{p+1}|u|^{p+1} +  \frac{1}{2}\left|\slashed{\nabla} u\right|^2 \right]  dxdt. \label{integral identity in weighted 1}
\end{align}
Here $a_R$ is a truncated version of $a$ defined by 
\[
 a_R(x) = \left\{\begin{array}{ll} a(x), & \hbox{if}\; 0\leq x<R;\\ a(R), &\hbox{if}\; x\geq R. \end{array}\right.
\]
Next we recall the expression of $E_-(0)$ in term of $\mu$ and the Morawetz integral, as given in \eqref{expression of E neg temp} and rewrite it in the form of 
\[
 \pi \mu([0,\infty)) + \iint_{\Rm^3 \times \Rm^+} \left(\frac{p-1}{2(p+1)}\cdot \frac{|u|^{p+1}}{|x|} + \frac{1}{2} \cdot\frac{|\slashed{\nabla} u|^2}{|x|} \right) dxdt 
 =  \int_{\Rm^3} e_-(x,0) dx.
\] 
We then multiply both sides of this identity by $a_R(0)$, add it to \eqref{integral identity in weighted 1} and obtain
\begin{align*}
 & \pi \int_0^\infty a_R(t) d\mu(t) + \iint_{\Rm^3 \times \Rm^+} a_R(|x|+t)\left(\frac{p-1}{2(p+1)}\cdot \frac{|u|^{p+1}}{|x|} + \frac{1}{2}\cdot \frac{|\slashed{\nabla} u|^2}{|x|} \right) dxdt\\
 &\qquad =  \int_{\Rm^3} a_R(|x|) e_-(x,0) dx + \iint_{\Omega(0,R)} a'(|x|+t)\left[\frac{1}{p+1}|u|^{p+1} +  \frac{1}{2}\left|\slashed{\nabla} u\right|^2 \right] dxdt.
\end{align*}
Now we have the key observation by the assumption on the function $a$
\begin{align*}
 & \iint_{\Omega(0,R)} a'(|x|+t)\left[\frac{1}{p+1}|u|^{p+1} +  \frac{1}{2}\left|\slashed{\nabla} u\right|^2 \right]  dxdt\\
  \leq &  \iint_{\Omega(0,R)} \frac{\gamma a(|x|+t)}{|x|+t}\left[\frac{1}{p+1}|u|^{p+1} +  \frac{1}{2}\left|\slashed{\nabla} u\right|^2 \right]  dxdt\\
 \leq &   \iint_{\Omega(0,R)} \gamma a_R (|x|+t)\left[\frac{1}{p+1}\cdot \frac{|u|^{p+1}}{|x|} +  \frac{1}{2}\cdot \frac{\left|\slashed{\nabla} u\right|^2}{|x|} \right]  dxdt.
 \end{align*}
This immediately gives us
\begin{align*}
 & \pi \int_0^\infty a_R(t) d\mu(t) + \iint_{\Rm^3 \times \Rm^+} a_R(|x|+t)\left(\frac{p-1-2\gamma}{2(p+1)}\cdot \frac{|u|^{p+1}}{|x|} + \frac{1-\gamma}{2} \cdot\frac{|\slashed{\nabla} u|^2}{|x|} \right) dxdt\\
 &\qquad \leq  \int_{\Rm^3} a_R(|x|) e_-(x,0) dx \leq K_1.
\end{align*}
Finally we can make $R\rightarrow +\infty$ and finish our proof.
\end{proof}

\begin{remark} \label{p3 case of weighted Morawetz}
Let $p=3$ and $\gamma =1$. If a weight function $a(r)$ and a solution $u$ satisfy conditions in Proposition \ref{Weighted Morawetz General},
then we can follow the same argument as above and obtain
 \begin{align*}
   \pi \int_0^\infty a(t)\, d\mu(t) + \iint_{\Rm^3 \times \Rm^+} \frac{a(|x|+t)t}{|x|(|x|+t)}\left(\frac{1}{4}|u|^{4}+ \frac{1}{2}|\slashed{\nabla} u|^2 \right) dxdt \leq K_1.
 \end{align*}
\end{remark}

\begin{corollary}[Weighted Morawetz]\label{weighted Morawetz}
Assume that $3\leq p<5$ and $0<\kappa<1$. Let $u$ be a solution to (CP1) with a finite energy so that
\[
 K = \int_{\Rm^3} |x|^\kappa \left(\frac{1}{4}|(\mathbf{L}_+(u_0,u_1))(x) |^2 + \frac{1}{4}|\slashed{\nabla} u_0(x)|^2 + \frac{1}{2(p+1)}|u_0(x)|^{p+1} \right) dx < \infty,
\]
then we have the following weighted Morawetz estimates
\begin{align*}
  &\int_0^\infty t^{\kappa} d\mu(t) \lesssim_p K;& 
  &\int_0^\infty \int_{\Rm^3} \frac{(|x|+t)^\kappa \left(|u(x,t)|^{p+1}+|\slashed{\nabla} u(x,t)|^2\right)}{|x|} dxdt \lesssim_{p,\kappa} K.&
 \end{align*}
Here $\mu$ is the measure in the energy flux formula. In addition we have the decay estimate 
\[
 E_-(t)\lesssim_{p,\kappa} K t^{-\kappa}, \quad t>0.
\]
\end{corollary}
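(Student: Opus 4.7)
The plan is to apply Proposition \ref{Weighted Morawetz General} with the weight $a(r) = r^\kappa$ and parameter $\gamma = \kappa$. First I would verify the hypotheses: $a(r) = r^\kappa$ is continuous on $[0,\infty)$ and absolutely continuous on every $[0,R]$ since $\kappa > 0$; its derivative $a'(r) = \kappa r^{\kappa-1}$ satisfies $a'(r) = \kappa\, a(r)/r$, so the required bound $0 \leq a'(r) \leq \gamma a(r)/r$ holds with equality when $\gamma = \kappa \in (0,1)$. The quantity $K_1$ appearing in the general proposition is then exactly the $K$ of the present hypothesis.

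Substituting into the conclusion of Proposition \ref{Weighted Morawetz General} gives
\[
\pi \int_0^\infty t^\kappa\, d\mu(t) + \iint_{\Rm^3 \times \Rm^+} (|x|+t)^\kappa \left(\frac{p-1-2\kappa}{2(p+1)}\cdot \frac{|u|^{p+1}}{|x|} + \frac{1-\kappa}{2}\cdot\frac{|\slashed{\nabla} u|^2}{|x|}\right) dx\,dt \leq K.
\]
Since $\kappa < 1$ and $p \geq 3$ force $p - 1 - 2\kappa \geq 2 - 2\kappa > 0$, both coefficients $(p-1-2\kappa)/(2(p+1))$ and $(1-\kappa)/2$ are strictly positive with a lower bound depending only on $p$ and $\kappa$. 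The first two asserted inequalities can be read off immediately by discarding the other non-negative term and dividing by the appropriate constant.

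For the pointwise decay $E_-(t) \lesssim_{p,\kappa} K t^{-\kappa}$, I would invoke the identity
\[
E_-(t) = \pi \mu([t,\infty)) + \mathcal{M}\bigl(\Rm^3 \times [t,\infty)\bigr),
\]
which was established as \eqref{expression of E neg temp} inside the proof of Proposition \ref{monotonicity of energies}. For the first term, on the half-line $\{t' \geq t\}$ we have $1 \leq (t'/t)^\kappa$, hence $\pi \mu([t,\infty)) \leq t^{-\kappa}\, \pi \int_t^\infty (t')^\kappa\, d\mu(t') \lesssim_p K t^{-\kappa}$. For the second term, on $\Rm^3 \times [t,\infty)$ we have $1 \leq ((|x|+t')/t)^\kappa$, so $\mathcal{M}(\Rm^3 \times [t,\infty))$ is bounded by $t^{-\kappa}$ times the weighted space-time integral just proved, which is $\lesssim_{p,\kappa} K$. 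Adding these two estimates yields the claimed decay.

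I do not foresee any essential obstacle: the entire corollary is a direct specialization of Proposition \ref{Weighted Morawetz General} combined with the representation formula for $E_-(t)$ coming from Proposition \ref{monotonicity of energies}. The only point worth double-checking is the positivity of the coefficient $p-1-2\gamma$ with $\gamma = \kappa$, which follows from the standing assumptions $p \geq 3$ and $\kappa < 1$; and the integrability at the origin of $a(t)\, d\mu$, which is automatic since $\kappa > 0$ and $\mu$ is a finite measure by Lemma \ref{L2 bound}.
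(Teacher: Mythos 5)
Your proposal is correct and follows essentially the same route as the paper: apply Proposition \ref{Weighted Morawetz General} with $a(r)=r^\kappa$, $\gamma=\kappa$, $K_1=K$, and then combine the resulting weighted bounds with the representation $E_-(t)=\pi\mu([t,\infty))+\mathcal{M}(\Rm^3\times[t,\infty))$ from \eqref{expression of E neg temp} to get the decay estimate. The coefficient checks ($p-1-2\kappa>0$, $1-\kappa>0$) and the monotone-weight comparisons $1\leq (t'/t)^\kappa$ and $1\leq((|x|+t')/t)^\kappa$ are exactly the points the paper relies on.
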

\begin{proof}
 We apply Proposition \ref{Weighted Morawetz General} with $a(r) = r^\kappa$, $\gamma =\kappa$ and $K_1=K$. This immediately gives us the weighted Morawetz estimates. In order to obtain the decay estimate we use the expression of inward energy \eqref{expression of E neg temp}
 \begin{align*}
   E_-(t)  & = \pi \mu([t,\infty)) + \int_t^\infty \int_{\Rm^3} \left(\frac{p-1}{2(p+1)}\frac{|u|^{p+1}}{|x|} + \frac{1}{2} \frac{|\slashed{\nabla} u|^2}{|x|} \right) dx dt' \\
   & \lesssim_p   t^{-\kappa} \int_t^\infty (t')^{\kappa}  d\mu(t') + t^{-\kappa} \int_t^\infty \int_{\Rm^3} \frac{(|x|+t')^\kappa \left(|u(x,t')|^{p+1}+|\slashed{\nabla} u(x,t')|^2\right)}{|x|} dxdt' \\
   & \lesssim_{p,\kappa} Kt^{-\kappa}.
 \end{align*}
\end{proof}

\section{Application on Scattering Theory}

In this section we give a scattering theory as an application of our inward/outward energy theory and weighted Morawetz estimates. The idea is to combine the decay estimates obtained above with the local theory. We first give an abstract scattering theory.

\begin{lemma} \label{abstract inter}
Let $3\leq p \leq 5$. Assume that the constants $1<q_1,r_1,q_2,r_2<\infty$ and $k_1,k_2>0$ satisfy
\begin{align*}
 &1/q_2 + 3/r_2 = 1/2;& &k_1 +k_2 = p-1;&\\
 &k_1/q_1 +k_2/q_2 = 1/2;& &k_1/r_1+k_2/r_2 = 1/2.&  
\end{align*}
 If $u$ is a finite-energy solution to (CP1) with $\|u\|_{L^{q_1} L^{r_1}(\Rm^+ \times \Rm^3)} < + \infty$, then we also have
 \[
    \|u\|_{L^{2(p-1)} L^{2(p-1)} (\Rm^+ \times \Rm^3)}< +\infty.
 \]
In addition, the solution scatters in the space $\dot{H}^1 \times L^2(\Rm^3)$ in the positive time direction. More precisely, there exists $(v_0^+,v_1^+) \in \dot{H}\times L^2(\Rm^3)$, so that 
\[
 \lim_{t \rightarrow +\infty} \left\|\begin{pmatrix}u(\cdot,t)\\ \partial_t u(\cdot,t)\end{pmatrix} - \mathbf{S}_L (t) \begin{pmatrix} v_0^+\\ v_1^+\end{pmatrix}\right\|_{\dot{H}^{1} \times L^2} = 0.
\]
\end{lemma}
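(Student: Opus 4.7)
The goal is to first bootstrap the hypothesis $\|u\|_{L^{q_1}L^{r_1}(\mathbb{R}^+\times\mathbb{R}^3)}<+\infty$ into a global bound on a Strichartz norm at the energy level---namely $\|u\|_{L^{q_2}L^{r_2}(\mathbb{R}^+\times\mathbb{R}^3)}<+\infty$, whose scaling $1/q_2+3/r_2=1/2$ is precisely that of $\dot H^1\times L^2$-admissibility---and then to deduce from this both the critical $L^{2(p-1)}$ bound and scattering in $\dot H^1 \times L^2$.

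\textbf{Step 1 (local Strichartz bound).} Fix a subinterval $J\subset\mathbb{R}^+$. I apply the inhomogeneous Strichartz estimate with output pair $(q_2,r_2)$ to the Duhamel representation of $u$ based at the left endpoint of $J$. For the source term $F = -|u|^{p-1}u$, I write $|F|=|u|^{k_1}\cdot |u|^{k_2}\cdot |u|$ and H\"older with factors in $L^{q_1/k_1}L^{r_1/k_1}$, $L^{q_2/k_2}L^{r_2/k_2}$, and $L^{\infty}L^{6}$ (the last factor controlled by Sobolev $\dot H^1\hookrightarrow L^{6}$ together with energy conservation). The two Hölder identities $k_1/q_1+k_2/q_2=1/2$ and $k_1/r_1+k_2/r_2=1/2$ are set precisely so that the resulting norm on $F$ matches a valid dual Strichartz input pair for the 3D wave equation at energy scale---possibly after exploiting the parameter $\rho_2$ in the Ginibre--Velo statement (together with Lemma \ref{chain rule}) to trade a bit of spatial integrability for a fractional derivative. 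This produces an inequality of the shape
\[
\|u\|_{L^{q_2}L^{r_2}(J)}\ \lesssim_{p,E}\ 1\ +\ \|u\|_{L^{q_1}L^{r_1}(J)}^{k_1}\,\|u\|_{L^{q_2}L^{r_2}(J)}^{k_2}.
\]

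\textbf{Step 2 (closing by subdivision).} Since $\|u\|_{L^{q_1}L^{r_1}(\mathbb{R}^+)}<\infty$, I subdivide $\mathbb{R}^+$ into finitely many subintervals $J_1,\ldots,J_N$ on each of which $\|u\|_{L^{q_1}L^{r_1}(J_i)}<\eta$, with $\eta=\eta(p,E)$ chosen so small that the estimate of Step~1 closes by a standard continuity/absorption argument: one verifies $\|u\|_{L^{q_2}L^{r_2}}$ is finite on short sub-subintervals (via Lemma \ref{lemma local in energy space} and Strichartz), then absorbs the nonlinear term $\|u\|_{L^{q_2}L^{r_2}(J_i)}^{k_2}$ into the left-hand side to get $\|u\|_{L^{q_2}L^{r_2}(J_i)}\leq C(p,E)$. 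Summing over $i=1,\dots,N$ yields $\|u\|_{L^{q_2}L^{r_2}(\mathbb{R}^+)}<+\infty$.

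\textbf{Step 3 (critical norm and scattering).} With both $\|u\|_{L^{q_1}L^{r_1}}$ and $\|u\|_{L^{q_2}L^{r_2}}$ finite on $\mathbb{R}^+$, a further H\"older interpolation that also uses $\|u\|_{L^\infty L^{6}}\lesssim E^{1/2}$ furnishes the critical bound $\|u\|_{L^{2(p-1)}L^{2(p-1)}(\mathbb{R}^+\times\mathbb{R}^3)}<+\infty$. For scattering in $\dot H^1\times L^2$, I write $S_L(-t)(u(\cdot,t),u_t(\cdot,t)) = (u_0,u_1)-\int_0^t S_L(-s)(0,|u|^{p-1}u(s))\,ds$ and use the dual Strichartz estimate, combined with the global bounds from Steps~1--2, to show that the integrand is in a dual Strichartz source class on $\mathbb{R}^+$. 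Hence the integral converges in $\dot H^1\times L^2$, defining $(v_0^+,v_1^+)$; the Cauchy tail estimate then gives the asserted limit.

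\textbf{Main obstacle.} The delicate point is Step~1: verifying that the H\"older exponents built from the hypotheses place $|u|^{p-1}u$ in a genuinely \emph{admissible} dual Strichartz input space for the 3D wave equation at the energy level, including the integrability constraint $1/q+1/r\leq 1/2$ on the source side. The interplay between $(q_1,r_1)$, $(q_2,r_2)$, the $L^\infty L^{6}$ slot, and whether one must invoke a nonzero $\rho_2$ together with the fractional chain rule is the technical heart of the argument.
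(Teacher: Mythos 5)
Your overall architecture matches the paper's: bootstrap the hypothesis into a global energy-level Strichartz bound by exploiting smallness of the $L^{q_1}L^{r_1}$ tail, then read off the $L^{2(p-1)}L^{2(p-1)}$ bound and scattering from Duhamel. However, the step you yourself flag as the ``technical heart'' is not only unverified but, as you have set it up, fails. Your three-factor H\"older split $|F|=|u|^{k_1}\cdot|u|^{k_2}\cdot|u|$ with the last factor in $L^\infty L^6$ places $F$ in $L^2_t L^{3/2}_x$ (since $k_1/q_1+k_2/q_2=1/2$ and $k_1/r_1+k_2/r_2+1/6=2/3$). The dual pair is then $(q,r)=(2,3)$, which violates the admissibility constraint $1/q+1/r\leq 1/2$ of the Ginibre--Velo estimate; this constraint is on the pair itself and cannot be repaired by choosing $\rho_2$. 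So no inhomogeneous Strichartz estimate of the form you need is available for this source norm, and the inequality displayed in your Step 1 does not follow.

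The paper's resolution is structurally different in exactly this spot: there is no $L^\infty L^6$ slot at all. One measures the source as $\|D_x^{1/2}F(u)\|_{L^{4/3}L^{4/3}}$ (the dual pair $(4,4)$ with $\rho_2=-1/2$ is admissible at $s=1$), applies the fractional chain rule (Lemma \ref{chain rule}) to get $\|D_x^{1/2}F(u)\|_{L^{4/3}L^{4/3}}\lesssim \|D_x^{1/2}u\|_{L^4L^4}\,\|F'(u)\|_{L^2L^2}$, and then the two exponent identities are used to put $|u|^{p-1}=|u|^{k_1}|u|^{k_2}$ into $L^2L^2$ as $\|u\|_{L^{q_1}L^{r_1}}^{k_1}\|u\|_{L^{q_2}L^{r_2}}^{k_2}$ --- the extra power of $u$ is consumed by the derivative, not by an $L^\infty L^6$ norm. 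Consequently the bootstrap quantity must be $g(T)=\|u\|_{L^{q_2}L^{r_2}}+\|D_x^{1/2}u\|_{L^4L^4}$, not $\|u\|_{L^{q_2}L^{r_2}}$ alone, since the right-hand side of the closed inequality involves $\|D_x^{1/2}u\|_{L^4L^4}$; your Step 1 inequality as written would not close. Your Step 2 (subdivision versus the paper's single tail interval $[t_0,\infty)$ plus a crude $L^1L^2$ bound on $[0,t_0]$) and Step 3 are fine in spirit, and the final $L^{2(p-1)}$ bound is even simpler than you suggest: $\|u\|_{L^{2(p-1)}L^{2(p-1)}}^{p-1}=\||u|^{p-1}\|_{L^2L^2}\leq\|u\|_{L^{q_1}L^{r_1}}^{k_1}\|u\|_{L^{q_2}L^{r_2}}^{k_2}$ directly, with no interpolation through $L^\infty L^6$.
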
  
\begin{proof}
In this proof we will use the notation $F(u) = -|u|^{p-1} u$. First of all, we can apply Strichartz estimates and conclude that for any $T>0$
 \begin{align*}
 \|u\|_{L^{q_2} L^{r_2}([0,T]\times \Rm^3)} + & \|D_x^{1/2} u\|_{L^4 L^4 ([0,T]\times \Rm^3)}\\
   & \lesssim \|(u_0,u_1)\|_{\dot{H}^1 \times L^2(\Rm^3)} + \|F(u)\|_{L^1 L^2 ([0,T]\times \Rm^3)}<+\infty.
 \end{align*}
As a result, if we fix a time $t_0>0$, then the function defined by
\[
 g_{t_0}(T) = \|u\|_{L^{q_2} L^{r_2}([t_0,T]\times \Rm^3)} +  \|D_x^{1/2} u\|_{L^4 L^4 ([t_0,T]\times \Rm^3)}
\]
 is always a continuous real-valued function of $T\in [t_0,\infty)$. Furthermore, we can apply Strichartz estimates and obtain 
 \begin{align*}
  g_{t_0}(T) \lesssim \|(u(\cdot,t_0),\partial_t u(\cdot,t_0))\|_{\dot{H}^1 \times L^2} + \|D_x^{1/2} F(u)\|_{L^{4/3} L^{4/3}([t_0,T]\times \Rm^3)}.
 \end{align*}
 We may apply the energy conservation law and the ``chain rule'' Lemma \ref{chain rule} here:
 \[
  g_{t_0}(T) \lesssim E^{1/2} + \|D_x^{1/2} u\|_{L^4 L^4([t_0,T]\times \Rm^3)} \|F'(u)\|_{L^2 L^2([t_0,T]\times \Rm^3)}.
 \] 
 The assumptions on the constants $p_1,r_1,p_2,r_2,k_1,k_2$ guarantee that the following H\"{o}lder inequality holds
 \begin{equation} \label{Holder Inequality}
 \|F'(u)\|_{L^2 L^2} \lesssim_1 \||u|^{p-1}\|_{L^2 L^2} \leq \|u\|_{L^{q_1}L^{r_1}}^{k_1} \|u\|_{L^{q_2} L^{r_2}}^{k_2}.
\end{equation}
 Thus we obtain an inequality about $g_{t_0}$
 \begin{align*}
  g_{t_0}(T) & \lesssim E^{1/2} + \|D_x^{1/2} u\|_{L^4 L^4([t_0,T]\times \Rm^3)} \|u\|_{L^{p_1}L^{r_1}([t_0,T]\times \Rm^3)}^{k_1} \|u\|_{L^{q_2} L^{r_2}([t_0,T]\times \Rm^3)}^{k_2} \\
  & \lesssim E^{1/2} + \|u\|_{L^{q_1}L^{r_1}([t_0,T]\times \Rm^3)}^{k_1} \left(g_{t_0}(T)\right)^{k_2+1}.
 \end{align*}
 In other words, there exists a constant $C$ independent of $t_0, T$, so that
 \begin{equation} \label{self inequality}
  g_{t_0}(T) \leq CE^{1/2} + C\|u\|_{L^{p_1}L^{r_1}([t_0,T]\times \Rm^3)}^{k_1} \left(g_{t_0}(T)\right)^{k_2+1}.
 \end{equation}
 We may choose a small constant $\eps>0$, so that 
 \begin{equation} \label{assumption of eps}
  2CE^{1/2} > CE^{1/2} + C\eps^{k_1} \left(2CE^{1/2}\right)^{k_2+1}.
 \end{equation}
 Since $\|u\|_{L^{q_1}L^{r_1}(\Rm^+ \times \Rm^3)} < +\infty$ and $q_1<+\infty$, we can always find a time $t_0$ so that 
 \[
  \|u\|_{L^{q_1}L^{r_1}([t_0,\infty) \times \Rm^3)} < \eps.
 \] 
Now we can simply compare the inequalities \eqref{self inequality} and \eqref{assumption of eps} to verify that $g_{t_0}(T) \neq 2CE^{1/2}$ for all $T>t_0$. Next we observe $g_{t_0}(t_0) = 0$, apply an argument of continuity and conclude $g_{t_0}(T) < 2CE^{1/2}$ for all $T\geq t_0$. This uniform upper bound implies 
\begin{align*}
 \|u\|_{L^{q_2} L^{r_2}([t_0,\infty)\times \Rm^3)} & + \|D_x^{1/2} u\|_{L^4 L^4 ([t_0,\infty)\times \Rm^3)}\leq 2CE^{1/2}\\
 &  \Rightarrow  \|u\|_{L^{q_2} L^{r_2}(\Rm^+ \times \Rm^3)}  + \|D_x^{1/2} u\|_{L^4 L^4 (\Rm^+ \times \Rm^3)} < +\infty.
\end{align*}
 Next we recall the inequality \eqref{Holder Inequality} to conclude
\begin{align*}
 \|u\|_{L^{2(p-1)}L^{2(p-1)}(\Rm^+\times \Rm^3)}^{p-1} &  = \||u|^{p-1}\|_{L^2 L^2 (\Rm^+\times \Rm^3)} \\
 & \leq \|u\|_{L^{q_1}L^{r_1}(\Rm^+\times \Rm^3)}^{k_1} \|u\|_{L^{q_2} L^{r_2}(\Rm^+\times \Rm^3)}^{k_2} < +\infty.
\end{align*}
Thus $\|u\|_{L^{2(p-1)} L^{2(p-1)}(\Rm^+ \times \Rm^3)} < +\infty$. The inequalities above also imply
\[
 \|D_x^{1/2} F(u)\|_{L^{4/3} L^{4/3}(\Rm^+ \times \Rm^3)} \lesssim \|D_x^{1/2}\|_{L^4 L^4(\Rm^+ \times \Rm^3)} \|F'(u)\|_{L^2 L^2(\Rm^+ \times \Rm^3)} < +\infty.
\]
Finally we recall $\mathbf{S}_L(t)$ preserve the $\dot{H}^1 \times L^2$ norm, apply the Strichartz estimate to obtain
\begin{align*}
 & \limsup_{t_1,t_2\rightarrow +\infty} \left\|\mathbf{S}_L (-t_1)\begin{pmatrix}u(\cdot,t_1)\\ \partial_t u(\cdot,t_1)\end{pmatrix} - \mathbf{S}_L (-t_2) \begin{pmatrix}u(\cdot,t_2)\\ \partial_t u(\cdot,t_2)\end{pmatrix}\right\|_{\dot{H}^{1} \times L^2 (\Rm^3)}\\
 = & \limsup_{t_1,t_2\rightarrow +\infty} \left\|\mathbf{S}_L (t_2-t_1)\begin{pmatrix}u(\cdot,t_1)\\ \partial_t u(\cdot,t_1)\end{pmatrix} - \begin{pmatrix}u(\cdot,t_2)\\ \partial_t u(\cdot,t_2)\end{pmatrix}\right\|_{\dot{H}^{1} \times L^2 (\Rm^3)}\\
 \lesssim & \limsup_{t_1,t_2\rightarrow +\infty} \|D_x^{1/2} F(u)\|_{L^{4/3} L^{4/3}([t_1,t_2] \times \Rm^3)}\\
  = & 0.
\end{align*}
Since $\dot{H}^1 \times L^2$ is a complete space, there exists $(v_0^+, v_1^+)\in \dot{H}^1 \times L^2$ so that
\[
 \left\|\mathbf{S}_L (-t)\begin{pmatrix}u(\cdot,t)\\ \partial_t u(\cdot,t)\end{pmatrix} - \begin{pmatrix} v_0^+\\ v_1^+\end{pmatrix}\right\|_{\dot{H}^1 \times L^2} \rightarrow 0\; \Rightarrow\; \left\|\begin{pmatrix}u(\cdot,t)\\ \partial_t u(\cdot,t)\end{pmatrix} - \mathbf{S}_L (t) \begin{pmatrix} v_0^+\\ v_1^+\end{pmatrix}\right\|_{\dot{H}^1 \times L^2} \rightarrow 0.
\]
 \end{proof}
\begin{remark}
 Lemma \ref{abstract inter} also holds for $q_1 = +\infty$ as long as we substitute the assumption $\|u\|_{L^{q_1} L^{r_1}(\Rm^+ \times \Rm^3)} < +\infty$ by $\displaystyle \lim_{t \rightarrow +\infty} \|u\|_{L^\infty L^{r_1}([t,\infty)\times \Rm^3)} = 0$. We may combine the inward/outward energy method and this abstract scattering theory to give a new proof of the following already known result: Any global-in-time solution $u$ to defocusing, energy critical 3D wave equation with a finite energy must scatter in both two time directions. The proof consists of two major steps
 \begin{itemize}
  \item We apply inward/outward energy method to show $\displaystyle \lim_{t \rightarrow \pm \infty} E_{\mp} (t) = 0$. This implies that $\displaystyle \lim_{t \rightarrow \pm \infty} \|u(\cdot,t)\|_{L^6(\Rm^3)}=0$.
  \item We apply Lemma \ref{abstract inter} with $p=5$, $(q_1,r_1) = (\infty, 6)$, $(q_2,r_2)=(4,12)$ and $(k_1,k_2) = (2,2)$.
 \end{itemize}
\end{remark}

\subsection{Proof of Theorem \ref{main 2}} \label{sec: proof main 2}
Since the wave equation is time-reversible, we only need to prove the scattering in the positive time direction. Let $u$ and its initial data $(u_0,u_1)$ be as in Theorem \ref{main 2}. According to Remark \ref{relationship of u w energy}, we also have
\begin{equation}
  K = \int_{\Rm^3} |x|^\kappa \left(\frac{1}{4}|\mathbf{L}_+ (u_0,u_1)|^2 + \frac{1}{4}|\slashed{\nabla} u_0|^2 + \frac{1}{2(p+1)}|u_0|^{p+1}\right) dx < +\infty. \label{condition for weighted morawetz}
\end{equation}
Now we may apply Corollary \ref{weighted Morawetz} and obtain
\[
 E_-(t) \lesssim_{p,\kappa} K t^{-\kappa},\; t>0; \quad \Rightarrow \quad \|u(\cdot,t)\|_{L^{p+1}(\Rm^3)}^{p+1} \lesssim_{p,\kappa} K t^{-\kappa}, \; t>0.
\]
This means $u \in L^{q} L^{p+1}([1,\infty)\times \Rm^3)$ for any $q > (p+1)/\kappa$. We also have $u \in L^\infty L^{p+1} (\Rm \times \Rm^3)$ by the energy conservation law. As a result we have 
\begin{equation}
 u \in L^{q} L^{p+1}(\Rm^+ \times \Rm^3), \quad \forall q > (p+1)/\kappa.  \label{finite norm}
\end{equation}
Let us choose a constant $\beta$ so that
\[
  \max\left\{\frac{5-p}{2\kappa},4-p\right\} < \beta < 1,
\]
and then choose $q_1, r_1, q_2, r_2, k_1, k_2$ accordingly 
\begin{align*}
 &\frac{1}{q_1} = \frac{5-p}{2(p+1)\beta};& &\frac{1}{r_1} = \frac{1}{p+1};&\\
 &\frac{1}{q_2} = \frac{\beta + (p-4)}{2(p-1)-4\beta}>0;& &\frac{1}{r_2} = \frac{1-\beta}{2(p-1)-4\beta}>0;&\\
  &k_1=\frac{(p+1)\beta}{1+\beta}>0;& &k_2 = \frac{(p-1)-2\beta}{1 + \beta}>0.&
\end{align*}
One can check that the following identities hold
\begin{align*}
 &\frac{1}{q_2} + \frac{3}{r_2} = \frac{1}{2};& &k_1+k_2 = p-1;&  &\frac{k_1}{q_1} + \frac{k_2}{q_2} = \frac{1}{2};& &\frac{k_1}{r_1} + \frac{k_2}{r_2} = \frac{1}{2}.&
\end{align*}
Our assumption on $\beta$ guarantees $q_1 = \frac{2(p+1)\beta}{5-p} > (p+1)/\kappa$. Thus we have $u \in L^{q_1} L^{r_1}(\Rm^+ \times \Rm^3)$ by \eqref{finite norm}. Now we can apply Lemma \ref{abstract inter} with these constants $q_1, r_1, q_2, r_2, k_1, k_2$ and the solution $u$ to conclude 
\begin{itemize}
 \item[(a)] $u \in L^{2(p-1)} L^{2(p-1)} (\Rm^+ \times \Rm^3)$. 
 \item[(b)] The solution $u$ scatters in the space $\dot{H}^1 \times L^2$ as $t \rightarrow +\infty$.
\end{itemize}
According to Proposition \ref{local existence}, conclusion (a) implies that the scattering also happens in the space $\dot{H}^{s_p} \times \dot{H}^{s_p-1}$. 
This then gives the scattering of solution $u$ in the spaces $\dot{H}^s \times \dot{H}^{s-1}(\Rm^3)$ for all $s \in [s_p,1]$ by Lamma \ref{scattering in different spaces}.

\subsection{Proof of Theorem \ref{main 3}}
There is a technical difficulty in the proof. We do not assume that the initial data come with a finite energy. This can be solved by approximation techniques. Let us assume $u$ has a maximal lifespan $(-T_-,T_+)$. We recall the following smooth cut-off operator introduced in Lemma \ref{cutoff lemma}
\begin{align*}
 &\mathbf{P}_{\eps} f= \phi_\eps\cdot f,& &\phi_\eps(x) = \phi(x/\eps),&
\end{align*}
where $\phi: \Rm^3 \rightarrow [0,1]$ is a radial, smooth cut-off function satisfying 
\[
  \phi(x) = \left\{\begin{array}{ll}1, & \hbox{if}\; |x|\geq 1;\\ 0, & \hbox{if}\; |x|<1/2; \end{array}\right.
 \] 
and define $(u_0^\eps, u_1^\eps) = (\mathbf{P}_\eps u_0,\mathbf{P}_\eps u_1)$. A straightforward calculation shows
\begin{align*}
 \int_{\Rm^3} |x|\cdot|\nabla u_0^\eps|^2 dx & = \int |x|\cdot |\phi_\eps(x) \nabla u_0 + u_0 \nabla \phi_\eps|^2 dx \\
 & \leq (1+\delta)\int_{\Rm^3} |x|\cdot |\phi_\eps \nabla u_0|^2 dx + \left(1+\frac{1}{\delta}\right) \int_{\Rm^3}|x|\cdot |u_0 \nabla \phi_\eps|^2 dx \\
 & \leq (1+\delta)\int_{\Rm^3} |x|\cdot |\nabla u_0|^2 dx + C\left(1+\frac{1}{\delta}\right) \int_{\frac{\eps}{2}<|x|<\eps} \eps^{-1}\cdot |u_0(x)|^2 dx.
\end{align*}
Here $\delta >0$ is an arbitrary constant. By Cauchy-Schwartz inequality we have
\begin{align*}
 \int_{\frac{\eps}{2}<|x|<\eps} \eps^{-1} \cdot |u_0(x)|^2 dx & \leq 
 \left( \int_{\frac{\eps}{2}<|x|<\eps} |x|\cdot |u_0(x)|^4 dx\right)^{1/2}\left( \int_{\frac{\eps}{2}<|x|<\eps} \frac{\eps^{-2}}{|x|} dx\right)^{1/2}\\
 & \lesssim_1 \left( \int_{\frac{\eps}{2}<|x|<\eps} |x|\cdot |u_0(x)|^4 dx\right)^{1/2} \rightarrow 0, \quad \hbox{as}\; \eps \rightarrow 0^+.
\end{align*}
Thus we have
\[
 \limsup_{\eps\rightarrow 0^+} \int_{\Rm^3} |x|\cdot|\nabla u_0^\eps|^2 dx \leq \int_{\Rm^3} |x|\cdot |\nabla u_0|^2 dx.
\]
This implies 
\begin{align*}
 \limsup_{\eps\rightarrow 0^+} \int_{\Rm^3} |x|\left(\frac{1}{2}|\nabla u_0^\eps|^2 + \frac{1}{2}|u_1^\eps|^2 + \frac{1}{4}|u_0^\eps|^4 \right) dx 
 & \leq \int_{\Rm^3} |x|\left(\frac{1}{2}|u_0|^2 + \frac{1}{2}|u_1|^2 + \frac{1}{4}|u_0|^4\right) dx\\
 & = E_{1,0}(u_0,u_1) < +\infty.
\end{align*}
Since the support of $(u_0^\eps,u_1^\eps)$ is contained in $\{x: |x|\geq \eps/2\}$, the inequality above means $E(u_0^\eps,u_1^\eps)< +\infty$. By Remark 
\ref{relationship of u w energy}, we also have 
\[
  \limsup_{\eps\rightarrow 0^+} \int_{\Rm^3} |x|\left(\frac{1}{4}|\mathbf{L}_+ (u_0^\eps,u_1^\eps)|^2 + \frac{1}{4}|\slashed{\nabla} u_0^\eps|^2 + \frac{1}{8}|u_0^\eps|^4 \right) dx \leq E_{1,0}(u_0,u_1).
\]
Now we are able to apply Remark \ref{p3 case of weighted Morawetz} with $a(r)=r$ and conclude that the corresponding solution $u^\eps$ to (CP1) with initial data $(u_0^\eps,u_1^\eps)$ satisfies the weighted Morawetz estimate
\[
 \limsup_{\eps\rightarrow 0^+} \iint_{\Rm^3 \times \Rm^+} \frac{t}{|x|}\left(\frac{1}{4}|u^\eps(x,t)|^4 + \frac{1}{2}|\slashed{\nabla}u^\eps (x,t)|^2 \right) dx dt \leq E_{0,1}(u_0,u_1).
\]
Now let us consider the limit process $\eps \rightarrow 0^+$. According to Lemma \ref{cutoff lemma}, we have $\|(u_0^\eps,u_1^\eps)-(u_0,u_1)\|_{\dot{H}^{1/2} \times \dot{H}^{-1/2}} \rightarrow 0$. This implies that $u^\eps(x,t)$ converges to $u(x,t)$ almost everywhere in $\Rm^3 \times [0,T_+)$, possibly up to a subsequence, by the continuous dependence of solutions on initial data, as given in Proposition \ref{continuous dependence}. By Fatou's lemma we have
\[
 \int_0^{T_+} \int_{\Rm^3} \frac{t}{|x|} |u(x,t)|^4  dx dt \leq 4E_{1,0}(u_0,u_1).
\]
In addition, Corollary \ref{tail estimate} guarantees that for sufficiently large radius $R$, we always have 
\[
 \int_0^{T_+} \int_{|x|>R+t} |u(x,t)|^4  dx dt < +\infty.
\]
We may fix such a radius $R$ and an arbitrary time $T \in (0,T_+)$, use both two estimates above, then obtain 
\begin{align*}
 \int_{T}^{T_+} \!\!\int_{\Rm^3} |u(x,t)|^4  dx dt & = \int_T^{T_+} \!\!\int_{|x|>R+t} |x(x,t)|^4 dx dt + \int_T^{T_+} \!\!\int_{|x|<R+t} |x(x,t)|^4 dx dt \\
 & \leq \int_T^{T_+} \!\!\int_{|x|>R+t} |x(x,t)|^4 dx dt + \frac{R+T}{T}\!\!\int_T^{T_+} \int_{|x|<R+t} \frac{t}{|x|} |x(x,t)|^4 dx dt\\
 & < +\infty.
\end{align*}
This immediately gives us the scattering in the positive time direction by the scattering criterion given in Proposition \ref{local existence}. The negative time direction can be dealt with in the same manner since the wave equation is time-reversible. 

\end{document}